\definecolor{LimeGreen}{cmyk}{0.50, 0.5, 1, 0}
\newcommand{\EEE}{\color{black}}
\newcommand{\dx}{\, {\rm d}x}
\newcommand{\dy}{\, {\rm d}y}
\newcommand{\dt}{\, {\rm d}t}
\newcommand{\dP}{\, {\rm d}P}
\newcommand{\e}{\varepsilon}
\DeclareMathOperator{\dist}{dist}
\newcommand{\Sph}{{\mathbb S}}
\newcommand{\ie}{{\it; i.e.}, }
\theoremstyle{plain}
\newtheorem{theorem}{Theorem}[section]
\newtheorem{lemma}[theorem]{Lemma}
\newtheorem{proposition}[theorem]{Proposition}
\newtheorem{corollary}[theorem]{Corollary}
\numberwithin{equation}{section}
\let \O=\Omega
\newcommand{\N}{\mathbb{N}}
\newcommand{\Z}{\mathbb{Z}}
\newcommand{\Q}{\mathbb{Q}}
\newcommand{\R}{\mathbb{R}}
\newcommand{\I}{\mathcal{I}}
\newcommand{\F}{\mathscr{F}}
\newcommand{\Adm}{\mathscr{A}}
\newcommand{\A}{\mathcal{A}}
\renewcommand{\S}{\mathbb{S}}
\renewcommand{\L}{\mathcal{L}}
\renewcommand{\H}{\mathcal{H}}
\newcommand{\dHn}{\, {\rm d}\H^{n-1}}
\newcommand{\loc}{\mathrm{loc}}
\newcommand{\T}{\mathcal{T}}
\newcommand{\x}{\times }
\newcommand{\m}{\mathbf{m}}
\renewcommand{\hom}{\mathrm{hom}}
\newcommand{\defas}{:=}
\newcommand{\wcont}{\subset\subset}
\newcommand{\om}{\omega}
\newcommand{\uu}{{\rm u}}
\theoremstyle{definition}
\newtheorem{definition}[theorem]{Definition}
\theoremstyle{remark}
\newtheorem{remark}[theorem]{Remark}
\renewcommand{\tilde}{\widetilde}
\newcommand{\sm}{\setminus}
\renewcommand{\d}{\, \mathrm{d}}
\title[Phase-transition functionals]
{$\Gamma$-convergence and stochastic homogenisation of   phase-transition functionals }
\author[R. Marziani]{Roberta Marziani}
\address[]{Angewandte Mathematik, WWU M\"unster, Germany}
\email[Roberta Marziani]{roberta.marziani@uni-muenster.de}
\begin{document}


\begin{abstract} 
	 In  this paper we study
  the asymptotics of singularly perturbed phase-transition functionals of the form
\[
\F_k(u)=\frac{1}{\e_k}\int_A f_k(x,u,\e_k\nabla u)\dx\,,  
\]
where $u \in [0,1]$ is a phase-field variable, $\e_k>0$ a singular-perturbation parameter\ie $\e_k \to 0$, as $k\to +\infty$, and the integrands $f_k$ are such that,
for every $x$ and every $k$, $f_k(x,\cdot ,0)$ is a double well potential with zeros at 0 and 1.
We prove that the functionals $\F_k$ $\Gamma$-converge (up to subsequences) to a surface functional of the form
\begin{equation*}
\F_\infty(u)=\int_{S_u\cap A}f_\infty(x,\nu_u)\dHn\,,
\end{equation*}
where $u\in BV(A;\{0,1\})$ and $f_\infty$ is characterised by the double limit of suitably scaled minimisation problems.
Afterwards we extend our analysis to the setting of stochastic homogenisation  and prove a $\Gamma$-convergence result for  \emph{stationary random} integrands. 
\end{abstract}

\maketitle

{\small
\noindent \keywords{\textbf{Keywords:} Singular perturbation, phase-field approximation, free discontinuity problems, $\Gamma$-convergence, deterministic and stochastic homogenisation.}

\medskip

\noindent \subjclass{\textbf{MSC 2010:} 
49J45, 
49Q20,  
74Q05.  
}
}

\section{Introduction}

The classical theory of  phase transition is based on the  assumption that at the equilibrium two immiscible fluids in a container will separate to create a sharp interface with minimal area.
 Alternatively Cahn and Hilliard in \cite{CH} (see also \cite{WaWalls})  proposed a model  which describes the transition as a  continuous phenomenon concentrated on a thin layer, where a fine mixture of the two fluids is allowed.  
 The connection between the classical theory of phase transition and the Cahn-Hilliard model was conjectured by Gurtin in \cite{Gu} and  rigorously derived, via $\Gamma$-convergence, by Modica in \cite{Mo}, by generalising a previous result together with Mortola \cite{MoMo} (see also \cite{Ste}, and \cite{Alberti}).  \\

 The model considered in \cite{Mo, MoMo} is formalised as follows. 
 Representing by $A\subset\R^n$, open bounded set with Lipschitz boundary, the container of the two fluids, and by $u\colon A\to[0,1]$ the density of 
  the second fluid, the associated Modica-Mortola  functionals are given by 
      \begin{equation}\label{MM-intro}
 	\mathcal {M}_k(u)=
 	\int_A\left(\frac{W(u)}{\e_k}+\e_k^{p-1}|\nabla u|^p\right)\dx\,.
 \end{equation}
  Here $\e_k\searrow0$, as $k\to+\infty$, is an elliptic regularisation parameter which corresponds to the characteristic length-scale of a phase transition. Moreover, $p>1$ and $W\colon\R\to[0,+\infty)$ is a double well potential vanishing at 0 and 1.  
  Clearly the functionals in \eqref{MM} are finite in $W^{1,p}(A)$.
Note that if $(u_k)\subset W^{1,p}(A)$, $0\le u_k\le 1$, is a sequence satisfying $\sup_k\mathcal M_k(u_k)<+\infty$, then from the first term we may deduce that $u_k\to u$ in measure with $u(x)\in\{0,1\}$ a.e. $x\in A$ (which corresponds to a separation of  the two phases). Actually the condition $0\le u_k\le 1$ implies that  $u_k$ converges to $u$ strongly in $L^1(A)$.
 On the other hand the second term penalizes spatial inhomogeneities of  $u_k$ or equivalently the occurrence of too many transition regions. 
According to this, in \cite{Mo} it was shown that the functionals in  \eqref{MM-intro} $\Gamma$-converge to the functional
\begin{equation}\label{lim}
c_p\H^{n-1}(S_u),
\end{equation}
where now $u\in BV(A;\{0,1\})$ (the space of functions with bounded variation taking values in $\{0,1\}$ a.e.).  
Here $S_u$ denotes the set of  discontinuity points of $u$, while the constant $c_p>0$  represents the optimal cost to make a transition between 0 and 1. The quantity in \eqref{lim} can be interpreted as  the surface tension between the two fluids.
We stress also that this result is stable  under the  volume constraint $\int_Au\dx=m$ with $0<m<\L^n(A)$ (that is if we fix the volume of the second fluid).\\

Starting from the result in \cite{MoMo} phase-field models have been extended in several directions including, among others, multi-phase models that describe the behaviour of an arbitrary number of immiscible fluids \cite{Baldo} (see also \cite{Ste, FonTar}), models for heterogeneous fluids that can also undergo a temperature change \cite{Bou}, anisotropic models \cite{BaFo,OwSte}, second order singular perturbations \cite{CDMFL,FonMan, CSZ2011}, and models that allow to analyse the interaction between singular perturbation and homogenisation \cite{AnBraCP, BraZep,CFG,CFHP,Morfe}. These last variants are those which are closest in the spirit to the present work.\\


Drawing some inspiration from \cite{BMZ}, in the present paper, we aim first at deriving $\Gamma$-convergence and integral representation results for a general class of phase-transition functionals that in some sense unify several of the variants mentioned above. Second, we will apply the abstract $\Gamma$-convergence result to obtain a stochastic homogenisation result for a sequence of phase-field functionals with random stationary integrands. Precisely, in the first part we perform a $\Gamma$-convergence analysis, as $k\to+\infty$, of a family of singularly perturbed phase-transition functionals of the form
\begin{equation}\label{functionals}
	\F_k(u)=\frac{1}{\e_k}\int_A f_k(x,u,\e_k\nabla u)\dx\,,
\end{equation}
where $\e_k\searrow0$ and $u\colon A\to[0,1]$.
We assume that the integrands
$f_k\colon\R^n\times[0,1]\times\R^n\to[0,+\infty)$ belong to a suitable class of functions denoted by $\mathcal F$ (see Section \ref{subs:setting} for its definition). This in particular ensures that for every $k\in\N$ and every $x\in\R^n$
\begin{equation}\label{growth}
c_1\left(W(u)+|\xi|^p\right)\le f_k(x,u,\xi)\le c_2\left(W(u)+|\xi|^p\right),
\end{equation}
for every $u\in[0,1]$ and $\xi\in\R^n$, and for some $0<c_1\le c_2<+\infty$. As a consequence $f_k(x,\cdot,0)$ is a double well potential that vanishes at 0 and 1. In addition the functional $\F_k$ is bounded from above and from below by the Modica-Mortola functional in \eqref{MM-intro}.
          
 Our first main result is contained in Theorem \ref{thm:main-result} which asserts the following: if  the integrands $f_k$ belong to $\mathcal F$, then (up to subsequences) the functionals $\F_k$ $\Gamma$-converge to a surface-type functional of the form 
 \begin{equation}\label{lim2}
 \F_\infty(u)=\int_{S_u\cap A}f_\infty(x,\nu_u)\dHn,
 \end{equation}
          where $u\in BV(A;\{0,1\})$ and $\nu_u$ denotes the external unit normal to $S_u$.  Moreover $f_\infty$  is characterised by a cell formula given by
          \begin{equation}\label{cell-formula}
          	f_\infty(x,\nu)=\limsup_{\rho\to0}\lim_{k\to+\infty}\inf\frac{1}{\e_k}\int_{Q_\rho^\nu(x)}f_k(y,u,\e_k\nabla u)\dx,
          \end{equation}
          where $Q_\rho^\nu(x)$ is a cube centred at $x$ with side-length $\rho$ oriented in the direction $\nu$, and the infimum is taken over all $u\in W^{1,p}(Q_\rho^\nu(x))$, $0\le u\le1$, such that $u=\bar{u}^\nu_{x,\e_k}$ near $\partial Q_\rho^\nu(x)$. Here $\bar{u}^\nu_{x,\e_k}$ is a suitable regularisation (see Section \ref{subs:notation} for the precise definition) of the jump function $u^{\nu}_x$ defined as
          \begin{equation*}
          	u^{\nu}_{x}(y)=\begin{cases}1& \text{if }\; (y-x) \cdot \nu \geq 0\,, 
          		\cr
          		0 & \text{if }\; (y-x) \cdot \nu < 0\,.
          	\end{cases}
          \end{equation*}
          Notice that the presence of ``regularised" boundary conditions is related to the fact that $u$ belongs to $W^{1,p}(A)$ while $u^{\nu}_x$ jumps on the hyperplane $\{(x-y)\cdot\nu=0\}$.

       Due to the generality of our model the proof of Theorem \ref{thm:main-result} relies on an abstract approach known as localisation method (cf. for instance \cite{DM}). This argument consists of the following main steps:  we first localise the functionals $\F_k$ by introducing the dependence on the domain of integration; then, by means of a fundamental estimate,  we show that, up to subsequence, the $\Gamma$-limit exists and admit a surface-integral representation.  Eventually  we identify the surface energy density with the cell formula in \eqref{cell-formula}.\\

In the second part of the paper we are concerned with the case of stochastic homogenisation. We namely consider functionals of type \eqref{functionals} which  depend also on a variable $\omega$ belonging to the set of events $\O$ of a probability space $(\Omega,\mathcal T,P)$. This dependence follows by choosing integrands $f_k$ of type
      \begin{equation}\label{f-stoc}
          f_k(\omega,x,u,\xi)=f\left(
          \omega,\frac{x}{\e_k},u,\xi\right).
      \end{equation}
Here $f$ is a stationary random variable such that $f(\omega,\cdot,\cdot,\cdot)\in\mathcal F$  and is lower semicontinuous in the variables $u$ and $\xi$ for every $\omega\in\O$ (see Section \ref{sect:stochastic-homogenisation},  \ref{meas_f}-\ref{cont_in_xi}). 
Our second main result is contained in Theorem \ref{thm:stoch_hom_2} which states that if $f_k$ are as in \eqref{f-stoc}, then, almost surely, the functionals $\F_k$ $\Gamma$-converge to a surface functional of type \eqref{lim2}
 whose integrand is homogeneous in the space variable and is given by a cell formula.
\\
\indent
 The proof of Theorem \ref{thm:stoch_hom_2} is achieved by resorting on the abstract $\Gamma$-convergence result Theorem \ref{thm:main-result} as follows. From Theorem \ref{thm:main-result} we first deduce a homogenisation result (cf. Theorem \ref{thm:hom}) in the deterministic setting without requiring any spatial periodicity of the integrands but instead assuming the existence and spatial homogeneity of the limit of a certain cell formula. Afterwards we show that in the above stochastic setting, almost surely this limit indeed exists and is homogeneous almost surely.  This second step  involves the combination of the subadditive ergodic Theorem (see e.g. \cite{AK81, DMM}) with a specific analysis for regularised surface functionals in the spirit of \cite{ACR11, CDMSZ19a}. \\
 \indent
 Eventually to complete our analysis we show that if we restrict to the periodic setting, namely if $f_k$ are of the form $f(\frac{x}{\e_k},u,\xi)$ with $f$ periodic in the space variable, then the corresponding homogenisation result Theorem \ref{thm:det_hom_2} can be deduced  without requiring any lower semicontinuity of $f$  in the variables $u$ and $\xi$ (i.e., without assuming \ref{cont_in_xi}).

 We notice that our stochastic/periodic homogenisation results (Theorem \ref{thm:stoch_hom_2} and Theorem \ref{thm:det_hom_2}) cover in particular the cases considered in \cite{Morfe, CFHP} and the critical regime in \cite{AnBraCP}.
 As a final remark it would be interesting to fully characterise the $\Gamma$-limit for integrands of type \eqref{f-stoc} when the oscillation parameter $\e_k$ is replaced by a second scale parameter $\delta_k\searrow0$. We expect a different behaviour for the two regimes $\e_k\ll\delta_k$ and $\e_k\gg\delta_k$. However this type of analysis goes beyond the purpose of the present paper.
A similar analysis was performed in  \cite{AnBraCP,CFG,CFHP} in the deterministic setting (see also \cite{BEMZ21, BEMZ22} for the corresponding analysis for Ambrosio-Tortorelli type functionals).

  \subsection*{Outline of the paper}
  This paper is organised as follows. In Section \ref{sect:setting} we collect the notation adopted throughout the paper and introduce the functionals we will consider. In Section \ref{sect:main} we state the first main result of the paper, that is, the $\Gamma$-convergence and integral representation result (Theorem \ref{thm:main-result}). We also give
  a homogenisation result without any periodicity assumptions (Theorem \ref{thm:hom}). 
  Sections \ref{s:G-convergence}-\ref{sect:prop} contain the proof of Theorem \ref{thm:main-result}). In particular, in Section \ref{s:G-convergence} we implement the localisation method and prove a
compactness and integral representation result for the $\Gamma$-limit of $\F_k$ (Theorem \ref{thm:int-rep}); in Section \ref{sect:surface} we characterise the surface integrand (Proposition \ref{p:surface-term}); eventually in Section \ref{sect:prop}  we prove that the limit  surface integrand satisfies some properties (Proposition \ref{prop:f'-f''}).
  In Section \ref{sect:stochastic-homogenisation}  we state and prove our second main result, namely a stochastic homogenisation result for stationary random integrands (Theorem \ref{thm:stoch_hom_2}). In Section \ref{sect:periodic-hom} we prove a periodic homogenisation result (Theorem \ref{thm:det_hom_2}). Eventually in the Appendix we prove some technical lemmas which are used in Sections \ref{sect:stochastic-homogenisation}  \ref{sect:periodic-hom}.

%
%
\section{Setting of the problem and preliminaries}\label{sect:setting}

\noindent In this section we collect some notation and we introduce the family of functionals we will consider.

\subsection{Notation}\label{subs:notation} We start by listing the notation we will adopt throughout the paper.  

\begin{enumerate}[label=(\alph*)]
\item $n\geq 2$ is a fixed positive integer; 
\item $\S^{n-1}\defas\{\nu=(\nu_1,\ldots,\nu_n)\in \R^n \colon \nu_1^2+\cdots+\nu_n^2=1\}$ and $\widehat{\S}^{n-1}_\pm\defas\{\nu \in\S^{n-1}\colon \pm\nu_{i(\nu)}>0\}$, where $i(\nu)\defas\max\{i\in\{1,\ldots,n\}\colon \nu_i\neq 0\}$;
\item $\mathcal L^n$ and and $\mathcal H^{n-1}$ denote the Lebesgue measure and the $(n-1)$-dimensional Hausdorff measure on $\R^n$, respectively;
\item for every $A\subset\R^n$ let $\chi_A$ denote the characteristic function of the set $A$;
\item $\A$ denotes the collection of all open and bounded subsets of $\R^n$ with Lipschitz boundary. If $A,B \in \A$ by $A \subset \subset B$ we mean that $A$ is relatively compact in $B$;
\item $Q$ denotes the open unit cube in $\R^n$ with sides parallel to the coordinate axis, centred at the origin; for $x\in \R^n$ and $r>0$ we set $Q_r(x):= rQ+x$. Moreover, $Q'$ denotes the open unit cube in $\R^{n-1}$ with sides parallel to the coordinate axis, centred at the origin, for every $r>0$ we set $Q_r'\defas r Q'$;
\item\label{Rn} for every $\nu\in \Sph^{n-1}$ let $R_\nu$ denote an orthogonal $(n\x n)$-matrix such that $R_\nu e_n=\nu$; we also assume that $R_{-\nu}Q=R_\nu Q$ for every $\nu \in \S^{n-1}$, $R_\nu\in\Q^{n\x n}$ if $\nu\in\S^{n-1}\cap\Q^n$, and that the restrictions of the map $\nu\mapsto R_\nu$ to $\widehat{\Sph}_{\pm}^{n-1}$ are continuous. For an explicit example of a map $\nu \mapsto R_\nu$ satisfying all these properties we refer the reader, \textit{e.g.}, to~\cite[Example A.1]{CDMSZ19};
\item for $x\in\R^n$, $r>0$, and $\nu\in\S^{n-1}$, we define $Q^\nu_r(x):=R_\nu Q_r(x)$. 
\item\label{jump-fun} for $x\in \R^n$ and $\nu \in \Sph^{n-1}$ we denote with $u_{x}^{\nu}$ the piecewise constant function taking values $0,1$ and jumping across the hyperplane $\Pi^\nu(x):=\{y\in \R^n \colon (y-x) \cdot \nu=0\}$\ie
\begin{equation*}
u^{\nu}_{x}(y):=\begin{cases}1& \text{if }\; (y-x) \cdot \nu \geq 0\,, 
\cr
0 & \text{if }\; (y-x) \cdot \nu < 0\,;
\end{cases}
\end{equation*}
\item\label{1dim-profile} let ${\rm u} \in C^1(\R)$ with $0\leq \uu \leq 1$, be a one-dimensional function such that 
$$\uu(t)=\chi_{(0,+\infty)} \quad\text{for} \quad |t|>1
;$$
\item\label{u-bar} for $x\in \R^n$ and $\nu \in \Sph^{n-1}$ we set
\begin{equation*}
\bar u^\nu_{x} (y):= \uu ((y-x) \cdot \nu) \,;
\end{equation*}
\item\label{u-bar-e} for $x\in\R^n$, $\nu\in\S^{n-1}$ and $\e>0$ we set
\begin{equation*}
\bar u^\nu_{x,\e}(y)\defas \uu \big(\tfrac{1}{\e}(y-x)\cdot\nu)\,.
\end{equation*}
We notice that in particular, $\bar u^\nu_{x,\e}(y)=u^\nu_x$ in $\{y\colon |(y-x)\cdot\nu|>\e\}$, and $\bar{u}_{x,1}^\nu=\bar{u}_x^\nu$;
\item for a given topological space $X$, $\mathcal{B}(X)$ denotes the Borel $\sigma$- algebra on $X$. If $X=\R^d$, with $d\in \N$, $d\ge1$ we simply write $\mathcal{B}^d$ in place of $\mathcal B(\R^d)$. For $d=1$ we write $\mathcal B$. 
\end{enumerate}

\medskip

\noindent 
For $A\in \A$ we let $BV(A)$ be the space of functions with bounded variation  (see for instance \cite{AFP} for a detailed exposition of the subject). 
We recall that if $u\in BV(A)$, then its distributional derivative $Du$ is an $\R^n$- valued Radon measure on $A$.  We denote by $|Du|(A)$ its total variation.

Moreover a set $E\subset\R^n$ is a set of finite perimeter in $A$, or a \textit{Caccioppoli set}, if   $\chi_E\in BV(A)$ and we let
\begin{equation*}\label{perimeter}
	\mathcal P_A(E)\defas |D\chi_E|(A)\,,
\end{equation*}
be the \textit{perimeter} of $E$ in $A$. The family of sets with finite perimeter can be identified with the space $BV(A;\{0,1\})$, namely, the space of functions in $BV(A)$ taking values in $\{0,1\}$ almost everywhere. Indeed if $u\in BV(A;\{0,1\})$ then $u=\chi_E$ where $E=\{x\colon u(x)=1\}$ and
 \begin{equation*}\label{c:BV}
Du(B)=\int_{B\cap S_u} \nu_u \d\mathcal H^{n-1}\,,
\end{equation*}
for every $B \in \mathcal B^n$, where $S_u$ is the set of approximate discontinuity points of $u$ which coincides with the reduced boundary of $E$, while $\nu_u$ is the external normal to $S_u$.
Moreover  
\begin{equation*}
\mathcal P_A(E)=|Du|(A)=\H^{n-1}(S_u\cap A)\,.
\end{equation*}
\medskip

Throughout the paper $C$ denotes a strictly positive constant which may vary from line to line and within the same expression. 

\subsection{Setting of the problem}\label{subs:setting}
Let $p>1$, let $c_1,c_2$ be given constants such that $0<c_1\leq c_2<+\infty$.
Let $W\colon\R\to[0,+\infty)$ be a double-well potential, that is a continuous function vanishing only at 0 and 1.
We denote by $\mathcal{F}:=\mathcal{F}(W,p,c_1,c_2)$ the collection of all functions $f\colon \R^n\x \R\x \R^n\to [0,+\infty)$ satisfying the following conditions:
\begin{enumerate}[label=($f\arabic*$)]
\item(measurability)\label{hyp:meas-f}  $f$ is Borel measurable on $\R^n\x  \R \EEE \x \R^n$;
\item\label{hyp:lb-f} (lower bound) for every $x \in \R^n$, every $u\in  \R $, and every $\xi\in \R^{n}$
\begin{equation*}
c_1\big( W(u)+|\xi|^p\big) \leq f(x,u,\xi)\, ;
\end{equation*}
\item\label{hyp:ub-f} (upper bound) for every $x \in \R^n$, every $u\in  \R $, and every $\xi \in \R^{n}$
\begin{equation*}
	f(x,u,\xi) \leq c_2( W(u)+|\xi|^p)\, .
\end{equation*}
%
%
%
%
\end{enumerate}
\medskip

\noindent
For $k\in \N$ let $(f_k) \subset \mathcal F$ and let $(\e_k)\subset(0,1]$ be a decreasing sequence of  real numbers converging to zero, as $k \to +\infty$.
We consider the sequence of singularly perturbed phase-transitions functionals $\F_k \colon L^1_\loc(\R^n) \times \A \longrightarrow [0,+\infty]$ defined by 
\begin{align}\label{F_e}
\F_k(u, A)\defas
\begin{cases}
\displaystyle\frac{1}{\e_k}\int_A f_k(x,u,\e_k\nabla u)\dx &\text{if}\ u\in  W^{1,p}(A)\, ,0\leq u\leq 1 \, ,\\
+\infty &\text{otherwise}\,. 
\end{cases}
\end{align}
%
\medskip
It is convenient to introduce some further notation.
For  $A\in\A$, $x\in \R^n$ and $\nu\in \Sph^{n-1}$ we consider the following minimisation problem 
\begin{equation}\label{eq:ms}
\m_{k}(\bar u_{x,\e_k}^\nu,A)\defas\inf\{\F_k(u,A)\colon u\in \Adm(\bar{u}_{x,\e_k}^\nu,A)\}\,,
\end{equation}
where 
\begin{equation}\label{c:adm-e}
\Adm(\bar{u}_{x,\e_k}^\nu,A)\defas\big\{u\in W^{1,p}(A),\; 0\leq u\leq 1 \colon \, u=\bar u_{x,\e_k}^\nu\ \text{near}\ \partial A\ \big\}\,.
\end{equation}
%
In \eqref{eq:ms} by ``$u=\bar u^\nu_{x,\e_k}$ near $\partial A$" we mean that 
the boundary datum is attained in a neighbourhood of  $\partial A$.
Eventually for every $\rho>2\e_k$, every $x \in \R^n$, and every $\nu\in \mathbb S^{n-1}$ we set
\begin{equation}\label{f'}
f'(x,\nu):=\limsup_{\rho\to 0}\frac{1}{\rho^{n-1}}\liminf_{k\to +\infty}\m_{k}(\bar u_{x,\e_k}^\nu,Q^\nu_\rho(x))\, ,
\end{equation}
\begin{equation}\label{f''}
f''(x,\nu):=\limsup_{\rho\to 0}\frac{1}{\rho^{n-1}}\limsup_{k\to +\infty}\m_{k}(\bar u_{x,\e_k}^\nu,Q^\nu_\rho(x))\,.
\end{equation}
\begin{remark}\label{initial-rem}
\begin{enumerate}[label=$(\alph*)$]
\item\label{rem:trivial} 
From \ref{hyp:lb-f} and \ref{hyp:ub-f} together with the assumptions on $W$ we have that
\begin{equation}\label{f-value-at-0}
	f(x,u,\xi)=0\; \text{ if and only if }\; (u,\xi)\in\{(0,0),(1,0)\}\,,
\end{equation}
for every $x\in\R^n$.
As a consequence $f(x,\cdot,0)$ is a double-well potential vanishing at 0 and 1 for every $x\in\R^n$. Moreover the functionals
	$$ \frac{1}{\e_k}\int_Af(x,u,\e_k\nabla u)\dx\,,$$ decrease under the transformation $u \to \min\{\max\{u,0\},1\}$. Thus 
	it is not restrictive to define $\F_k$ to be finite when
	$u\in W^{1,p}(A) $ satisfies the bounds $0\leq u \leq 1$ and the $\Gamma$-convergence for functionals $\F_k$ that are finite when $u\in W^{1,p}(A) $  readily follows from Theorem \ref{thm:main-result}.
\item \label{rem:bounds-Fe}	Assumptions~\ref{hyp:lb-f}--\ref{hyp:ub-f} imply that for every $A\in\A$ and every $u\in  W^{1,p}(A)$ with $0\leq u\leq 1$ it holds
\begin{equation}\label{est:bounds-Fe}
	\begin{split}
		c_1\mathcal M_k(u,A)
		\leq\F_k(u,A)
		\leq c_2\mathcal M_k(u,A)\,;
	\end{split}
\end{equation}
where 
\begin{equation}\label{MM}
	\mathcal M_{k}(u,A)\defas\int_A\bigg(\frac{W(u)}{\e_k}+\e_k^{p-1}|\nabla u|^p\bigg)\dx\,,
\end{equation} 
is the Modica-Mortola functional.
\item \label{compactness}
From the assumption that $\F_k$ is finite when $u\in  W^{1,p}(A)$, $0\le u\le 1$, together with the bound in  \eqref{est:bounds-Fe}  and  the result in \cite{Mo} we easily deduce the following compactness property:
Let $(u_k)\subset W^{1,p}(A)$ be a sequence satisfying $\sup_k \F_k(u_k,A)<+\infty$.  Then $u_k$ converges strongly in $L^1(A)$ to some $u\in BV(A, \{0,1\})$.  \\
If the function $W$ satisfies the following coercivity condition:
\begin{equation*}
W(u)\ge \varphi(|u|)\quad\forall u\in \R\,, 
\end{equation*}
with $\varphi\colon[0,+\infty)\to[0,+\infty)$ such that $\lim_{t\to+\infty}\varphi(t)/t=+\infty$, then the same compactness property holds if we define $\F_k$ to be finite when $u\in  W^{1,p}(A)$ (cf. \cite{Mo}).
\item\label{rem:1dim-energy} 	Let $A\in \A$ be such that $A=A'\x I$ with $A'\subset\R^{n-1}$ open and bounded and $I\subset\R$ open interval.
Let $\nu\in \S^{n-1}$ and set $ A_{\nu}:=R_\nu A$, with $R_\nu$ as in \ref{Rn}. For every $k \in \N$ we have 
\begin{equation}\label{c:needs-name}
	\begin{split}
		\mathcal M_k(\bar{u}_{x,\e_k}^\nu,A_\nu)
		\leq \int_{A'}\int_
	\R\big(W(\uu(t))+|\uu'(t)|^p\big)\dt\dy'= C_{\uu}\L^{n-1}(A')\,,
	\end{split}
\end{equation} 
where 
\[
C_{\uu}:=\int_\R\big(W( \uu(t))+|\uu'(t)|^p\big)\dt=\int_{0}^1\big(W( \uu(t))+|\uu'(t)|^p\big)\dt <+\infty\,.
\]
In particular from \ref{hyp:ub-f} and \eqref{c:needs-name} we deduce
\begin{equation}\label{1dim-energy-bis}
	\m_k(\bar u_{x,\e_k}^\nu,A_\nu)\le \F_k(\bar{u}_{x,\e_k}^\nu,A_\nu)\leq c_2C_{\uu}\L^{n-1}(A')\,.
\end{equation}
\end{enumerate}
\end{remark}

%
%
\section{Statements of the main results}\label{sect:main}
\noindent In this section we collect some of the results of this paper. We start by stating our first main result (Theorem \ref{thm:main-result}), that is a $\Gamma$-convergence   and an integral representation result.
 Next we provide 
  a homogenisation result without periodicity assumptions (Theorem \ref{thm:hom}). 
The latter, in particular, will be crucial to obtain our second main result in the stochastic setting (see Section \ref{sect:stochastic-homogenisation}).     

\subsection{$\Gamma$-convergence}
The following result shows that, up to subsequences, the functionals $\F_k$  $\Gamma$-converge to a surface integral functional.   
Furthermore, the surface energy density can be characterised  as a double limit of a suitable minimisation problem.

\begin{theorem}[$\Gamma$-convergence]\label{thm:main-result}
Let $(f_k) \subset \mathcal F$ and let $\F_k$ be the functionals as in \eqref{F_e}. Then there exists a subsequence, not relabelled, such that for every $A \in \A$ the functionals $\F_{k}(\cdot\,, A)$ $\Gamma$-converge in  $L^1(A)$ to $\F_\infty(\cdot\,,A)$ with $\F_\infty \colon  L^1_\loc(\R^n) \times \A \longrightarrow [0,+\infty]$ given by
\begin{equation*}\label{F}
\F_\infty(u, A)\defas
\begin{cases}
\displaystyle\int_{S_u \cap A} f_\infty(x,\nu_u)\dHn &\text{if}\ u \in BV(A;\{0,1\})\,,\\[4pt]
+\infty &\text{otherwise}\,,
\end{cases}
\end{equation*}
where $f_\infty \colon \R^n \times \Sph^{n-1} \to [0,+\infty)$ is a Borel function. Moreover for every $x\in\R^n$ and every $\nu\in\Sph^{n-1}$ there hold
\begin{equation*}
		f_\infty(x,\nu)= f'(x,\nu) = f''(x,\nu)\, ,  
\end{equation*}
and
\begin{equation*} \label{prop-f_infty}
c_1c_p\le f_\infty(x,\nu)
\le c_2c_p\,,
\end{equation*}
with $f'$, $f''$ as in \eqref{f'} and \eqref{f''}, respectively and $c_p\defas p(p-1)^{\frac{1-p}{p}}\int_0^1W(t)^\frac{p-1}{p}\dt$. 
\end{theorem}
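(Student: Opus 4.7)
The plan is to follow the classical \emph{localisation method} (see e.g.\ \cite{DM}), which decouples the proof into an abstract $\Gamma$-compactness and integral representation step, and a subsequent identification of the surface density via the cell formula. First I would localise the functionals by regarding $\F_k$ as a set function in the second argument and consider the lower and upper $\Gamma$-limits
\[
\F'(u,A):=\Gamma\text{-}\liminf_k\F_k(u,A),\qquad \F''(u,A):=\Gamma\text{-}\limsup_k\F_k(u,A).
\]
Using the upper bound $\F_k\le c_2\mathcal M_k$ from Remark \ref{initial-rem}\ref{rem:bounds-Fe} together with Modica's compactness result (Remark \ref{initial-rem}\ref{compactness}), I would extract a subsequence and apply a diagonal/Urysohn-type argument on a countable base of $\A$ to obtain a not-relabelled subsequence along which $\F'(u,A)=\F''(u,A)=:\F_\infty(u,A)$ for every $A$ in a dense subfamily of $\A$, and such that $\F_\infty(u,\cdot)$ is finite only when $u\in BV(A;\{0,1\})$.

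The core of the abstract step is to verify that $\F_\infty(u,\cdot)$ is (the trace on open sets of) a Borel measure. For this I would invoke the De Giorgi–Letta criterion, whose nontrivial ingredients are inner regularity and subadditivity. Subadditivity requires a \emph{fundamental estimate}: given $A'\subset\subset A\subset\subset A''$ and $B\in\A$, and competitors $u_k$, $v_k$ for $\F_k$ on $A''$ and $B$ respectively, one constructs a cut-off $\varphi$ supported in $A\setminus\overline{A'}$ and interpolates $w_k:=\varphi u_k+(1-\varphi)v_k$. The $p$-growth upper bound \ref{hyp:ub-f} together with the uniform $L^1$-closeness of $u_k$ and $v_k$ (both converging to $u$) lets one control the extra gradient term $\e_k^{p-1}|\nabla\varphi|^p|u_k-v_k|^p$, which is negligible, and the $W$-term is controlled by the convexity of $W$ on $[0,1]$ up to a vanishing error because $u_k$, $v_k\in[0,1]$ converge to the same $\{0,1\}$-valued $u$. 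Once subadditivity is in place and lower semicontinuity in $u$ with respect to $L^1$-convergence is verified (which is automatic for $\Gamma$-limits), I would apply the integral representation theorem for surface functionals on $BV(A;\{0,1\})$ (in the spirit of Bouchitté–Fonseca–Leoni–Mascarenhas or \cite{CDMSZ19}) to conclude that
\[
\F_\infty(u,A)=\int_{S_u\cap A}f_\infty(x,\nu_u)\dHn
\]
with $f_\infty$ a Borel function bounded between $c_1 c_p$ and $c_2 c_p$ (the bounds coming directly from \eqref{est:bounds-Fe} and Modica's theorem applied to $\mathcal M_k$).

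Next I would identify $f_\infty(x_0,\nu)$ with the cell formula. The standard strategy is to test the representation at a jump point with normal $\nu$, taking $u=u^\nu_{x_0}$ and $A=Q^\nu_\rho(x_0)$: by differentiation of measures, $f_\infty(x_0,\nu)$ equals $\limsup_{\rho\to 0}\rho^{-(n-1)}\F_\infty(u^\nu_{x_0},Q^\nu_\rho(x_0))$ at $\mathcal H^{n-1}$-a.e.\ $x_0\in S_u$. The content of Proposition \ref{p:surface-term} is then to match this with $\m_k(\bar u^\nu_{x_0,\e_k},Q^\nu_\rho(x_0))$; the inequality $f_\infty\le f''$ follows by taking almost-minimisers of $\m_k$ and truncating them to match the regularised boundary datum, the truncation error being controlled by \eqref{1dim-energy-bis}; the reverse inequality $f'\le f_\infty$ requires modifying any recovery sequence $(u_k)$ so as to attain the boundary datum $\bar u^\nu_{x_0,\e_k}$ near $\partial Q^\nu_\rho(x_0)$, via a De Giorgi-type slicing/averaging argument on concentric cubes combined again with the fundamental-estimate philosophy in order to absorb the interface error. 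Finally, the equality $f'(x_0,\nu)=f''(x_0,\nu)$ on the whole subsequence (Proposition \ref{prop:f'-f''}) is obtained by showing that both quantities are sandwiched by $f_\infty$.

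I expect the \textbf{main obstacle} to be the matching step between the abstractly obtained density $f_\infty$ and the cell formula \eqref{f'}–\eqref{f''}: the boundary traces in $\m_k$ are the $\e_k$-regularised jumps $\bar u^\nu_{x_0,\e_k}$ rather than the sharp jump, so one has to carefully handle the matching of an $\e_k$-thin transition layer near $\partial Q^\nu_\rho(x_0)$ with an arbitrary competitor inside, while keeping the $0\le u\le 1$ constraint and not creating extra energy of order $\rho^{n-1}$. This is where the regularity of $\uu$ chosen in \ref{1dim-profile} and the $p$-growth structure \ref{hyp:ub-f} are used in a delicate way, and it is the reason why the characterisation is divided across Sections \ref{sect:surface} and \ref{sect:prop}.
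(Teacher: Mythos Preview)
Your overall strategy is exactly the paper's: localisation method, fundamental estimate, De Giorgi--Letta, integral representation, and then identification of the density with the cell formulas $f'$, $f''$. However, the identification step as you describe it would not close.

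You write that taking almost-minimisers of $\m_k$ gives $f_\infty\le f''$, and that modifying a recovery sequence to match the boundary datum gives $f'\le f_\infty$. These inequalities are stated backwards. Almost-minimisers $u_k$ of $\m_k(\bar u^\nu_{x,\e_k},Q^\nu_\rho(x))$ have $\F_k(u_k)\approx\m_k$; extending them by $\bar u^\nu_{x,\e_k}$ and using the $\Gamma$-\emph{liminf} inequality yields $\F_\infty(u,Q^\nu_{(1+\eta)\rho}(x))\le\liminf_k\m_k+o(1)$, i.e.\ $\hat f\le f'$. Conversely, starting from an almost-minimiser of $\m(u^\nu_x,Q^\nu_\rho(x))$, taking a recovery sequence and using the fundamental estimate to impose $\bar u^\nu_{x,\e_k}$ near the boundary gives $\limsup_k\m_k\le\F_\infty+o(1)$, i.e.\ $f''\le\hat f$. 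The chain that works is $\hat f\le f'\le f''\le\hat f$; your chain $f'\le f_\infty\le f''$ is vacuous (it is implied by $f'\le f''$) and gives no equality. Relatedly, Proposition \ref{prop:f'-f''} does \emph{not} prove $f'=f''$: that equality is a by-product of Proposition \ref{p:surface-term}, while Proposition \ref{prop:f'-f''} only establishes Borel measurability of $f'$, $f''$ and the bounds $c_1c_p\le f',f''\le c_2c_p$ (via the auxiliary quantities $f'_\rho$, $f''_\rho$ with a fixed-$\delta$ boundary layer).

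A smaller point: in your fundamental estimate you invoke ``convexity of $W$ on $[0,1]$'', but $W$ is only assumed continuous with $W^{-1}(0)=\{0,1\}$. The paper controls the $W$-term in the transition annulus by using that $W$ is bounded on $[0,1]$ and that the interpolated sequence $w_k$ converges in $L^1$ to a $\{0,1\}$-valued limit, so $\int W(w_k)\to 0$ (see the computation leading to \eqref{def:omega} and Step~3 of Proposition~\ref{prop:fund-est}).
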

For the reader's convenience we divide the proof of Theorem \ref{thm:main-result} in  three parts which can be found in Sections \ref{s:G-convergence}, \ref{sect:surface}, and \ref{sect:prop},  respectively. Precisely, in Section \ref{s:G-convergence} we show 
 that there is a sequence $(k_j)$, with $k_j \to +\infty$ as $j\to +\infty$, such that for every $A\in \mathcal A$ the corresponding functionals  $\F_{k_j}(\, \cdot\,, A)$ $\Gamma$-converge to a functional which is finite in $BV(A;\{0,1\})$ and is of the form 
\begin{equation*}\label{F-hat}
	\int_{S_u\cap A} \hat f(x,\nu_u)\dHn\,, 
\end{equation*}
for some Borel function $\hat f$ (see Theorem \ref{thm:int-rep}). In Section  \ref{sect:surface}  we identify $\hat f$ by showing that it is equal both to $f'$ and $f''$ which in particular are shown to coincide along the $\Gamma$-convergent subsequence (see Proposition \ref{p:surface-term}). Eventually in Section \ref{sect:prop} we prove that the function $f'$ (respectively $f''$) is  Borel measurable and satisfies $c_1c_p\le f'(x,\nu)
\le c_2c_p$ (respectively $c_1c_p\le f''(x,\nu)
\le c_2c_p$) for every $x\in\R^n$ and every $\nu\in \S^{n-1}$ (see Proposition \ref{prop:f'-f''}).

\medskip

By Theorem \ref{thm:main-result} and  the Urysohn property of $\Gamma$-convergence  \cite[Proposition 8.3]{DM} we can deduce the following useful property. 

\begin{corollary}\label{c:cor-main-thm}
Let $(f_k) \subset  \mathcal F$ and let $\F_k$ be the functionals as in \eqref{F_e}. Let $f'$, $f''$ be as in \eqref{f'} and \eqref{f''}, respectively.
Assume that  
\[
f'(x,\nu) = f''(x,\nu)=:f_\infty(x,\nu),\; \text{ for every $x\in\R^n$ and every $\nu\in\Sph^{n-1}$},  
\]
for some Borel function $f_\infty \colon \R^n \times \Sph^{n-1} \to [0,+\infty)$.
Then, for every $A \in \A$ the functionals $\F_{k}(\cdot\,, A)$ $\Gamma$-converge in $L^1(A)$ to $\F_\infty(\cdot\,,A)$ with $\F_\infty \colon L^1_\loc(\R^n) \times \A \longrightarrow [0,+\infty]$ given by
\[
\F_\infty(u,A)\defas
\begin{cases}
\displaystyle\int_{S_u \cap A} f_\infty(x,\nu_u)\dHn &\text{if}\ u \in BV(A;\{0,1\})\, ,
\\[4pt]
+\infty &\text{otherwise}\,.
\end{cases}
\]
\end{corollary}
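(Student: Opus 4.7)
The plan is to exploit the Urysohn property of $\Gamma$-convergence (\cite[Proposition 8.3]{DM}), which states that a sequence $\Gamma$-converges to $F$ if and only if every subsequence has a further subsequence $\Gamma$-converging to $F$. Applied here, it suffices to show that every subsequence of $(\F_k)$ admits a further subsequence $\Gamma$-converging in $L^1(A)$ to the same candidate limit $\F_\infty$ (with density $f_\infty$), for every $A\in \A$.

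So I would fix an arbitrary subsequence $(\F_{k_j})$ and apply Theorem \ref{thm:main-result} to it. This yields a further subsequence, not relabelled, such that $\F_{k_j}(\cdot,A)$ $\Gamma$-converges in $L^1(A)$ to a surface functional with some Borel density $\widetilde f_\infty\colon \R^n\times \S^{n-1}\to [0,+\infty)$, and moreover $\widetilde f_\infty(x,\nu) = \widetilde f'(x,\nu) = \widetilde f''(x,\nu)$, where $\widetilde f'$ and $\widetilde f''$ are the analogues of \eqref{f'} and \eqref{f''} with $\liminf_k$, $\limsup_k$ replaced by $\liminf_j$, $\limsup_j$ along the extracted subsequence.

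The key identification step is then to compare $\widetilde f_\infty$ with the given $f_\infty$. For every fixed $x,\nu,\rho$, the elementary inequalities
\begin{equation*}
\liminf_{k\to +\infty}\m_{k}(\bar u_{x,\e_k}^\nu,Q^\nu_\rho(x))
\le \liminf_{j\to +\infty}\m_{k_j}(\bar u_{x,\e_{k_j}}^\nu,Q^\nu_\rho(x))
\le \limsup_{j\to +\infty}\m_{k_j}(\bar u_{x,\e_{k_j}}^\nu,Q^\nu_\rho(x))
\le \limsup_{k\to +\infty}\m_{k}(\bar u_{x,\e_k}^\nu,Q^\nu_\rho(x))
\end{equation*}
hold by definition of subsequence, and they are preserved after dividing by $\rho^{n-1}$ and taking $\limsup_{\rho\to 0}$. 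Hence $f'(x,\nu)\le \widetilde f'(x,\nu)\le \widetilde f''(x,\nu)\le f''(x,\nu)$ for every $x\in\R^n$ and $\nu\in\S^{n-1}$. By the standing assumption $f'=f''=f_\infty$, this forces $\widetilde f'=\widetilde f''=f_\infty$, and therefore $\widetilde f_\infty= f_\infty$. The $\Gamma$-limit along the extracted sub-subsequence is thus exactly $\F_\infty$.

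Since the extracted subsequence was drawn from an arbitrary subsequence, the Urysohn property concludes that the entire sequence $\F_k(\cdot,A)$ $\Gamma$-converges in $L^1(A)$ to $\F_\infty(\cdot,A)$ for every $A\in \A$. I do not foresee a real obstacle here: the argument is a direct combination of Theorem \ref{thm:main-result} with the Urysohn characterisation, the only delicate point being the routine but necessary verification that $\widetilde f', \widetilde f''$ are squeezed between $f'$ and $f''$, which ensures that the sub-subsequential density is forced to coincide with $f_\infty$.
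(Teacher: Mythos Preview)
Your proposal is correct and follows exactly the approach indicated in the paper, which simply states that the corollary follows from Theorem~\ref{thm:main-result} combined with the Urysohn property of $\Gamma$-convergence \cite[Proposition~8.3]{DM}. You have merely made explicit the squeezing argument $f'\le\widetilde f'\le\widetilde f''\le f''$ that forces the sub-subsequential density to coincide with $f_\infty$, which is the only detail the paper leaves implicit.
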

\subsection{Homogenisation} 
In this subsection we prove a general homogenisation theorem without assuming any spatial periodicity of the integrands $f_k$. This theorem will be employed to prove the stochastic homogenisation result Theorem \ref{thm:stoch_hom_2}.
 
\medskip 
 
We need to introduce some further notation. 
We fix $f\in\mathcal{F}$,  $A\in \A$, $u\in W^{1,p}(A)$,  $z\in \R^n$, $\nu \in \Sph^{n-1}$, and let $\bar u^\nu_z$ be as in \ref{u-bar} (with $x=z$). Then we set
\begin{equation}\label{def:F-1}
\F(u,A)\defas\int_Af(x,u,\nabla u)\dx\,;
\end{equation}
and
\begin{equation}\label{eq:m-bis}
\m(\bar u^\nu_z,A):=\inf\Bigl\{\F(u,A)\colon u\in \Adm(\bar u^\nu_z,A)\Bigr\}\,,
\end{equation}
where $\Adm(\bar u^\nu_z,A)$ is as in \eqref{c:adm-e}, with $\bar u^\nu_{x,\e_k}$ replaced by $\bar u^\nu_z$ (that is, $x=z$ and $\e_k=1$).

When the integrands $f_k$ in the definition of $\F_k$ are of type
\begin{equation}\label{eq:f-osc}
f_k(x,u,\xi):= f\Big(\frac{x}{\e_k},u,\xi\Big)\,,
\end{equation}
we can deduce the following homogenisation result.
%
\begin{theorem}[Deterministic homogenisation]\label{thm:hom} Let $f\in \mathcal F$ and let $\m(\bar u_{rx}^\nu,Q^\nu_r(rx))$ be as in \eqref{eq:m-bis} with $z=rx$ and $A=Q_r^\nu(rx)$.  Assume that for every $x\in \R^n$, $\nu\in \S^{n-1}$ the following limit
\begin{equation}\label{eq:f-hom}
\lim_{r\to +\infty} \frac{\m(\bar{u}^\nu_{rx},Q^\nu_r(rx))}{r^{n-1}}=:f_{\rm hom}(\nu)\,,
\end{equation}
exists and is independent of $x$. Then, for every $A\in \A$ the functionals $\F_k(\cdot, A)$ defined in \eqref{F_e} with $f_k$ as in \eqref{eq:f-osc} $\Gamma$-converge in $ L^1(A)$ to the functional $\F_{\rm hom}(\cdot,A)$, with $\F_{\rm hom}\colon L^1_{\loc}(\R^n)\times \A \longrightarrow [0,+\infty]$ given by 
\begin{equation*}
\F_{\rm hom}(u,A)\defas
\begin{cases}
\displaystyle\int_{S_u \cap A} f_{\rm hom}(\nu_u)\dHn &\text{if}\ u \in SBV^p(A)\, ,u\in\{0,1\}\text{ a.e. }
\\[4pt]
+\infty &\text{otherwise}\,.
\end{cases}
\end{equation*}
\end{theorem}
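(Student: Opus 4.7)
The plan is to invoke Corollary \ref{c:cor-main-thm} after showing that, for every $(x,\nu)\in\R^n\x\S^{n-1}$, the cell-formula quantities $f'$ and $f''$ in \eqref{f'}--\eqref{f''} both equal the constant $f_\hom(\nu)$ furnished by hypothesis \eqref{eq:f-hom}. The mechanism is an elementary blow-up change of variables that converts the $\e_k$-scaled cell problem on the cube $Q_\rho^\nu(x)$ into the unscaled problem \eqref{eq:m-bis} on a cube of side $\rho/\e_k$, whose size diverges as $k\to+\infty$.

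Concretely, for $f_k(y,u,\xi)=f(y/\e_k,u,\xi)$ and $u\in\Adm(\bar u_{x,\e_k}^\nu,Q_\rho^\nu(x))$, I would set $v(z):=u(\e_k z)$, so that $\nabla v(z)=\e_k\nabla u(\e_k z)$; the substitution $y=\e_k z$ then yields
\begin{equation*}
\frac{1}{\e_k}\int_{Q_\rho^\nu(x)}f\big(\tfrac{y}{\e_k},u(y),\e_k\nabla u(y)\big)\dy \;=\; \e_k^{n-1}\int_{Q_{\rho/\e_k}^\nu(x/\e_k)}f(z,v(z),\nabla v(z))\dz\,.
\end{equation*}
Since $\bar u_{x,\e_k}^\nu(\e_k z)=\uu((z-x/\e_k)\cdot\nu)=\bar u_{x/\e_k}^\nu(z)$, the map $u\mapsto v$ is a bijection between $\Adm(\bar u_{x,\e_k}^\nu,Q_\rho^\nu(x))$ and $\Adm(\bar u_{x/\e_k}^\nu,Q_{\rho/\e_k}^\nu(x/\e_k))$, and passing to the infimum gives
\begin{equation*}
\frac{\m_k(\bar u_{x,\e_k}^\nu,Q_\rho^\nu(x))}{\rho^{n-1}} \;=\; \frac{\m(\bar u_{x/\e_k}^\nu,Q_{\rho/\e_k}^\nu(x/\e_k))}{(\rho/\e_k)^{n-1}}\,.
\end{equation*}
Writing $r_k:=\rho/\e_k\to+\infty$, the centre of the large cube is $x/\e_k=r_k(x/\rho)$, so applying hypothesis \eqref{eq:f-hom} with $x/\rho$ in place of $x$ yields $\lim_k\m_k(\bar u_{x,\e_k}^\nu,Q_\rho^\nu(x))/\rho^{n-1}=f_\hom(\nu)$ for every fixed $\rho>0$. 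Taking $\limsup_{\rho\to 0}$ of both the $\liminf_k$ and $\limsup_k$ therefore forces $f'(x,\nu)=f''(x,\nu)=f_\hom(\nu)$ pointwise.

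Borel measurability of $f_\hom$, required to apply Corollary \ref{c:cor-main-thm}, is automatic: Proposition \ref{prop:f'-f''} guarantees that $f'$ is Borel in $(x,\nu)$, and since $f'(x,\nu)=f_\hom(\nu)$ is independent of $x$, $f_\hom$ is Borel in $\nu$. The corollary then converts the identification $f'=f''=f_\hom$ into the asserted $\Gamma$-convergence of the full sequence $\F_k(\cdot,A)$ to $\F_\hom(\cdot,A)$ for every $A\in\A$. The only delicate point in the whole plan is the careful bookkeeping of centres under rescaling, so that the large cube whose centre scales linearly with its size matches exactly the form required by \eqref{eq:f-hom}; once this alignment is made, everything else reduces to a change of variables combined with the abstract $\Gamma$-convergence machinery of Theorem \ref{thm:main-result}.
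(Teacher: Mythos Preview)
Your proposal is correct and follows essentially the same approach as the paper: both reduce to Corollary~\ref{c:cor-main-thm} by the change of variables $v(z)=u(\e_k z)$, which turns $\m_k(\bar u_{x,\e_k}^\nu,Q_\rho^\nu(x))/\rho^{n-1}$ into $\m(\bar u_{r_k x/\rho}^\nu,Q_{r_k}^\nu(r_k x/\rho))/r_k^{n-1}$ with $r_k=\rho/\e_k$, and then apply \eqref{eq:f-hom} at the point $x/\rho$. Your additional remark on the Borel measurability of $f_\hom$ via Proposition~\ref{prop:f'-f''} is a nice explicit check that the paper leaves implicit.
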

\begin{proof}
Let $f'$, $f''$ be as in \eqref{f'}, \eqref {f''}, respectively. By Corollary \ref{c:cor-main-thm} it suffices to show that 
	\begin{equation}\label{c:id-hom}
		f_{\hom}(\nu)=f'(x,\nu)=f''(x,\nu)\,,
	\end{equation}
for every $x\in\R^n$ and $\nu\in \Sph^{n-1}$.

Let $x\in\R^n$, $\nu\in \S^{n-1}$, $k\in\N$  and $\rho>2\e_k$ be fixed and let $u\in \Adm(\bar{u}_{x,\e_k}^\nu,Q_\rho^\nu(x))$. 
	 Define $u_k\in  W^{1,p}(Q^\nu_{\frac{\rho}{\e_k}}(\frac{x}{\e_k}))$  as $u_k(y):=u(\e_ky)$. Then clearly  $u_k \in \Adm(\bar{u}_{\frac{x}{\e_k}}^\nu,Q^\nu_{\frac{\rho}{\e_k}}(\tfrac{x}{\e_k}))$ and a change of variables gives
$$
\F_{k}(u,Q^\nu_\rho(x))=\e_k^{n-1}\int_{Q^\nu_{\frac{\rho}{\e_k}}(\frac{x}{\e_k})}f(x,v_k,\nabla v_k)\dx
\,.
$$
Hence by setting $r_k:=\frac{\rho}{\e_k}$ we get
	\begin{equation*}
\m_{k}(\bar{u}_{x,\e_k}^\nu,Q_\rho^\nu(x))=\e_k^{n-1}\m\big(\bar{u}_{\frac{x}{\e_k}}^\nu,Q_{\frac{\rho}{\e_k}}^\nu\big(\tfrac{x}{\e_k}\big)\big)=
\frac{\rho^{n-1}}{r_k^{n-1}}\m\big(\bar u^\nu_{r_k\frac{x}{\rho}},Q_{r_k}\big(r_k\tfrac{x}{\rho}\big)\big)\,.
\end{equation*}
Finally using \eqref{eq:f-hom} with $x/\rho$ in place of $x$ we obtain
\begin{equation*}
\lim_{k\to+\infty}\frac{\m_{k}(\bar u_{x,\e_k}^\nu,Q_\rho^\nu(x))}{\rho^{n-1}}=\lim_{k\to+\infty}\frac{\m\big(\bar u^\nu_{r_k\frac{x}{\rho}},Q_{r_k}^\nu\big(r_k\tfrac{x}{\rho}\big)\big)}{r_k^{n-1}}=f_{\hom}(\nu)\,,
\end{equation*}
and therefore  $f'(x,\nu)=f''(x,\nu)=f_{\hom}(\nu)$, for every $x\in \R^n$, $\nu\in \Sph^{n-1}$.
\end{proof}

%
%

\section{$\Gamma$-convergence and integral representation}\label{s:G-convergence}

\noindent In this section we prove that, up to subsequences, the functionals $\F_k$ $\Gamma$-converge to a surface integral functional.
In order to do that we apply the so called localisation method of $\Gamma$-convergence.
In particular we closely follow the theory described in \cite[Chapters 14-18]{DM} and  \cite[Chapters 10, 11]{BDf}, for this reason we discuss here only the main adaptations to our case.

We start by proving a preliminary result, namely we show that the functionals $\F_k$ satisfy a fundamental estimate, uniformly in $k$ similarly to \cite[Lemma 3.2]{AnBraCP}. 
\begin{proposition}[Fundamental estimate]\label{prop:fund-est}  Let  $\F_k$ be as in~\eqref{F_e}. Then there exist $K>0$ depending only on $c_1,c_2,p$ and $W$ and functions $\omega_{k}\colon L^1_{\loc}(\R^n)^2\times \A^3\to[0,+\infty)$ $($depending only on $k)$ such that for every $A,\,A',\,B\in \A$ with $A \subset\subset A'$ the following hold:
\begin{enumerate}[label=$(\roman*)$]
\item\label{fund-i} For every  $u\in W^{1,p}(A')$, $v\in W^{1,p}(B)$, $0\leq u,v\leq 1$, there exists $w\in W^{1,p}(A\cup B)$ with $0\leq w\leq 1$ such that
\begin{equation*}
w=u\quad\text{ a.e.\ in }\quad A\quad\text{ and }\quad
w=v\quad\text{ a.e.\ in }\quad B\sm\overline{A}'\,,
\end{equation*}
and
\begin{align*}\label{fund-est}
	\F_k(w, A\cup B)\leq(1+K\e_k) (\F_k(u,A')+ \F_k(v, B))+\omega_{k}(u,v,A,A',B)\,;
\end{align*}
\item\label{fund-ii} For every  $u\in W^{1,p}(A')$, $v\in W^{1,p}(B)$ $0\leq u,v\leq 1$ and every  $\tilde S\subset (A'\setminus\overline A)\cap B$ with
	\begin{equation*}
	u(x)=v(x)\in\{0,1\}\quad \text{ a.e. in } \quad\tilde S \,;
\end{equation*}
we have
\begin{equation*}
	\begin{split}
		\omega_{k}(u,v,A,A',B)\le K
	 \L^n((A'\setminus\overline A)\cap B\setminus\tilde S)\,;
	\end{split}
\end{equation*}
\item\label{fund-iii} For 
 every $(u_k),(v_k)\subset L^1_{\rm loc}(\R^n)$ having the same limit as $k\to\infty$ in $L^1((A'\setminus \overline A)\cap B)$ and satisfying
\begin{equation*}\label{eq:fund-estimate-ub}
	\sup_{k\in\N}\left(\F_k(u_k,A')+\F_k(v_k,B)\right)<+\infty\,,
\end{equation*}
it holds 
\begin{equation}\label{resto}
	\lim_{k\to\infty}\omega_{k}(u_k, v_k,A,A',B)=0\,.
\end{equation}
\end{enumerate}
\end{proposition}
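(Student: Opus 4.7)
I will implement a De Giorgi--style slicing/averaging argument, tuned to the singular perturbation scaling so as to yield the $(1+K\e_k)$ multiplier in~(i). Set $d:=\dist(A,\partial A')$ and, for $N=N_k:=\lceil C/(K\e_k)\rceil$, define nested sets $A=A_0\subset A_1\subset\cdots\subset A_N\subset A'$ by $A_i:=\{x\in A'\colon\dist(x,A)<id/N\}$, disjoint annuli $T_i:=A_i\setminus\overline{A_{i-1}}$, and smooth cutoffs $\varphi_i\in C_c^\infty(A_i;[0,1])$ with $\varphi_i\equiv 1$ on $\overline{A_{i-1}}$ and $\|\nabla\varphi_i\|_\infty\le c_0 N/d$. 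The competitors $w_i:=\varphi_i u+(1-\varphi_i)v$ take values in $[0,1]$, coincide with $u$ on $A$ and with $v$ on $B\setminus\overline{A'}$, and thus are admissible for the gluing in~(i).

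Bounding $\F_k(w_i,A\cup B)$ is the technical core. I split $A\cup B$ into the pieces where $w_i=u$, $w_i=v$, or $w_i$ is a proper interpolation (the latter being $T_i\cap B$), and in the interpolation piece combine three ingredients: the upper growth \ref{hyp:ub-f}; the elementary inequality $|\nabla w_i|^p\le 3^{p-1}(|\nabla u|^p+|\nabla v|^p+|\nabla\varphi_i|^p|u-v|^p)$; and the pointwise estimate $W(w_i)\le W(u)+\omega_W(|u-v|)$, where $\omega_W$ denotes the modulus of continuity of $W$ on $[0,1]$ (which exists by continuity of $W$ on a compact interval). Reinserting the $W(u)$ and $\e_k^{p-1}|\nabla u|^p$ contributions into $\F_k$-energies via the lower bound \ref{hyp:lb-f}, summing over $i=1,\dots,N$ (with the $T_i$ disjoint) and picking, by pigeonhole, an index $i^*$ achieving at most the mean yields, with $w:=w_{i^*}$,
\[
\F_k(w,A\cup B)\le \Bigl(1+\tfrac{C}{N}\Bigr)\bigl(\F_k(u,A')+\F_k(v,B)\bigr)+\omega_k(u,v,A,A',B),
\]
where I set
\[
\omega_k(u,v,A,A',B):=\tfrac{C\,\e_k^{p-1}N^{p-1}}{d^p}\int_{(A'\setminus\overline{A})\cap B}|u-v|^p\,dx+\tfrac{C}{N\e_k}\int_{(A'\setminus\overline{A})\cap B}\omega_W(|u-v|)\,dx.
\]
The choice $N\sim 1/\e_k$ makes $C/N\le K\e_k$ (giving~(i)), and simultaneously makes the two prefactors in $\omega_k$ of order~$1$ (up to the universal $1/d^p$ factor, which may be absorbed into $K$).

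Statement (ii) is then immediate: on the subset $\tilde S$ both integrands vanish since $u=v$, so the remainders are supported on $(A'\setminus\overline{A})\cap B\setminus\tilde S$, and the trivial bounds $|u-v|\le 1$ and $\omega_W(|u-v|)\le\omega_W(1)$ yield $\omega_k\le K\L^n((A'\setminus\overline{A})\cap B\setminus\tilde S)$. For (iii), although the uniform bound $\sup_k\bigl(\F_k(u_k,A')+\F_k(v_k,B)\bigr)<\infty$ is not used in the \emph{definition} of $\omega_k$, the $L^1$-convergence $u_k-v_k\to 0$ combined with $|u_k-v_k|\le 1$ allows me to invoke dominated convergence (along a subsequence where $u_k-v_k\to 0$ a.e.) in both integrals, giving $\omega_k(u_k,v_k,A,A',B)\to 0$; passage from subsequence to full sequence is routine. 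The main obstacle, and the reason for the full slicing-averaging machinery, is controlling the $W(w_i)/\e_k$ contribution in the transition annulus, which is a priori of order~$1/\e_k$: it is tamed by the triple combination of the bound $W(w_i)\le W(u)+\omega_W(|u-v|)$ (sending the ``$u$-part'' into $\F_k(u,A')$ via \ref{hyp:lb-f}), the pigeonhole gain of $1/N$ produced by averaging, and the uniform continuity of $W$ on $[0,1]$ (which guarantees $\omega_W(|u_k-v_k|)\to 0$ pointwise whenever $u_k-v_k\to 0$); for the choice $N\sim 1/\e_k$ the product $1/(N\e_k)$ is of order~$1$, exactly cancelling the singular $1/\e_k$ in the $W$-term.
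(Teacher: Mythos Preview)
Your argument is correct and follows the same overall De Giorgi slicing/averaging scheme as the paper: $N\sim 1/\e_k$ intermediate layers, convex-combination competitors $w_i=\varphi_i u+(1-\varphi_i)v$, and a pigeonhole choice of the best layer. The scaling analysis (making $C/N\le K\e_k$ while keeping the two prefactors in $\omega_k$ of order one) matches the paper's computation exactly.

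The one genuine variant is your treatment of the double-well contribution on the transition annulus. The paper simply retains $\sum_i\int_{S_i}W(w^i)\,dx$ as part of the remainder $\omega_k$; then in (iii) it uses the energy bound directly, via $\int_S W(u_k)\le c_1^{-1}\e_k\,\F_k(u_k,A')\to 0$, to force the limit $w$ to take values in $\{0,1\}$ a.e.\ and hence $\int_S W(w_k)\to 0$. You instead invoke the modulus of continuity to split $W(w_i)\le W(u)+\omega_W(|u-v|)$, re-absorb $W(u)$ into $\F_k(u,A')$ via \ref{hyp:lb-f}, and keep only $\omega_W(|u-v|)$ in the remainder. This has two mild advantages: your $\omega_k$ depends only on $|u-v|$ (not on the auxiliary interpolants $w^i$), and your proof of (iii) is essentially decoupled from the energy bound --- you need $\sup_k\F_k<\infty$ only to guarantee $0\le u_k,v_k\le 1$, whereas the paper uses it quantitatively. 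Conversely, the paper's version avoids introducing the modulus $\omega_W$ and keeps the remainder slightly more explicit. Both routes are standard and equally valid here.

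One small remark: your parenthetical ``up to the universal $1/d^p$ factor, which may be absorbed into $K$'' is not quite right, since $d=\dist(A,\partial A')$ depends on $A,A'$ while $K$ is claimed to depend only on $c_1,c_2,p,W$. The $1/d^p$ should sit inside $\omega_k$ (which is allowed to depend on $A,A',B$). The paper's proof has the same dependence, so this is a shared imprecision rather than a defect specific to your argument.
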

%
\begin{proof}
Let $A,\,A',\,B\in \A$ with $A\wcont A'$ be fixed. 

\step 1 we show \ref{fund-i}.
Let $d\defas\dist(A;\R^n\setminus A')$,
$N_k\defas\lfloor 2\e_k^{-1}\rfloor$ (here $\lfloor a\rfloor$  denotes the integer part of $a$), and let $A_1,\ldots,A_{N_k+1}\in \A$ with
\begin{equation*}
A\wcont A_1\wcont \dots \wcont A_{N_k+1} \wcont A'\,,
\end{equation*}
and 
\begin{equation*}
\dist(A_i;\R^n\setminus A_{i+1})\ge\frac{d}{N_k+2}\quad i=1,\dots,N_k\,.
\end{equation*}
For each $i=1,\ldots,N_k$ let $\varphi_i$ be a smooth cut-off function between $A_{i}$ and $A_{i+1}$ such that
\begin{equation*}
M_k\defas\max_{1\leq i\leq N_k} \|\nabla \varphi_i\|_\infty\le\frac{2(N_k+2)}{d}\,.
\end{equation*}
Let $u$ and $v$ be as in the statement.
For $i=1,\ldots,N_k$ we define $w^i \in W^{1,p}(A\cup B)$ as follows
\begin{equation}\label{w^i}
w^i := \varphi_{i} u + (1-\varphi_{i}) v\,.
\end{equation}
By setting  $S_i\defas (A_{i+1}\sm\overline{A}_{i})\cap B$ we obtain 
\begin{equation}\label{4est2}
\begin{split}
\F_k (w^i, A\cup B)&\leq \F_k (u, A_{i})  
+ \F_k (v, B\setminus \overline A_{i+1})+\F_k (w^i, S_i)\\
&\leq \F_k (u, A')  
+ \F_k (v, B)+\F_k (w^i, S_i)
\,.
\end{split}
\end{equation}
Now from~\ref{hyp:ub-f} we deduce
\begin{equation}\label{est:fund-est-Fs1}
	\begin{split}
	\F_k(w^i,S_i)\leq c_2\int_{S_i\cap B}\hspace*{-.5em}\bigg(\frac{W(w^i)}{\e_k}+\e_k^{p-1}|\nabla w^i|^p\bigg)\dx\,.
	\end{split}
\end{equation}
Moreover a convexity argument gives
\begin{equation}\label{prop:w}
	\begin{split}
	|\nabla w_i|^p&\le 2^{p-1}\left(|\nabla u|^p+|\nabla v|^p+|\nabla\varphi_i|^p|u-v|^p\right)\\
	&\le 2^{p-1}\left(|\nabla u|^p+|\nabla v|^p+M_k^p|u-v|^p\right)
	\,.
	\end{split}
\end{equation}
From \eqref{est:fund-est-Fs1}, \eqref{prop:w} and~\ref{hyp:lb-f} we obtain
\begin{equation}\label{est:fund-est-Fs2}
	\begin{split}
\F_k(w^i,S_i)&\leq \frac{2^{p-1}c_2}{c_1}\big(\F_k(u,S_i)+\F_k(v,S_i)\big)\\
&
+ 2^{p-1}c_2M_k^p\e_k^{p-1}\int_{S_i}|u-v|^p\dx
+c_2\int_{S_i}\frac{W(w^i)}{\e_k}\dx\,.
	\end{split}
\end{equation}
Now summing up in~\eqref{4est2} over all $i$,  by averaging we find an index $i^*\in\{1,\ldots,N_k\}$ such that
\begin{align}\label{summing}
	\F_k(w^{i^*},A\cup B) \leq\frac{1}{N_k} \sum_{i=1}^{N_k}\F_k (w^i, A\cup B)
	\leq \F_k(u,A')+\F_k(v,B)+ \frac{1}{N_k} \sum_{i=1}^{N_k}\F_k (w^i, S_i)\,.
\end{align}
By~\eqref{est:fund-est-Fs2} we have
\begin{equation}\label{omega_k}
 \frac{1}{N_k}\sum_{i=1}^{N_k}\F_k (w^i, S_i)\le\frac{\widehat M}{N_k}\left(\F_k(u,A')+\F_k(v,B)+M_k^p\e_k^{p-1}\int_{S}|u-v|^p\dx+\sum_{i=1}^{N_k}\int_{S_i}\frac{W(w^i)}{\e_k}\dx
\right)\,,
\end{equation}
with $\widehat M\defas\max\{\frac{2^{p-1} c_2}{c_1},2^{p-1}c_2,c_2\}$ and $S\defas (A'\setminus  \overline A)\cap B$.  Using that  $\frac{2-\e_k}{\e_k}\le N_k\le \frac2{\e_k}$ we have 
\begin{equation*}\label{constants}
\frac{\widehat M}{N_k}\le \frac{\widehat M \e_k}{2-\e_k}\le \widehat M\e_k\,,
\quad\text{ and }\quad
	\frac{\widehat M}{N_k}M_k^p\e_k^{p-1}\le
	C\e_k^p\left(\frac{1}{\e_k}+1\right)^p\le C
	\,.
\end{equation*}
This together with \eqref{summing} and \eqref{omega_k} give
\begin{equation*}
\F_k (w^i, A\cup B)\le
 (1+\widehat M\e_k) \left(\F_k(u,A')+\F_k(v,B)\right)+
 \omega_{k}(u,v,A,A',B)
\end{equation*}
where
\begin{equation}\label{def:omega}
\omega_{k}(u,v,A,A',B)\defas  C\left(\int_S|u-v|^p\dx+ \sum_{i=1}^{N_k}\int_{S_i}W(w^i)\dx\right)
\,,
\end{equation}
for some positive constant $C>0$ independent of $k$.

\step 2 we show \ref{fund-ii}.  Assume that $0\le u,v\le 1$ are such that 
\begin{equation*}
	u(x)=v(x)\in\{0,1\}\quad \text{ a.e. in } \quad\tilde S\subset S\,.
\end{equation*}
Then clearly $\int_{\tilde S}|u-v|^p\dx=0$ and
\begin{equation*}
W(w^i)=0 \quad \text{ a.e.\ in }\quad S_i\cap\widetilde S\,.
\end{equation*}
Moreover, since $0\le w^i\le1$ for all $i=1,...,N_k$, by the continuity of  $W$ 
\begin{equation*}
W(w_i)\le M \quad \text{ a.e.\ in }\quad S_i\setminus \widetilde S\,, \ \forall i=1,...,N_k\,.
\end{equation*}
Hence \eqref{def:omega} readily implies
\begin{equation*}
\omega_{k}(u,v,A,A',B)\le C(2^p+M )\L^n(S\setminus \tilde S)\,,
\end{equation*}
and  by setting $K\defas \max\{\widehat M,C(2^p +M)\} $ we deduce \eqref{fund-ii}.

\step 3 we show \ref{fund-iii}. Let $(u_k),(v_k)\subset L^1_{\rm loc}(\R^n)$, $0\le u_k,v_k\le1$, be two sequences converging to $w$ in $L^1(S)$ and such that
\begin{equation*}
	\sup_{k\in\N}\left(\F_k(u_k,A')+\F_k(v_k,B)\right)<C\,.
\end{equation*}
Since $0\le u_k,v_k\le1$ we immediately deduce that $u_k$ and $v_k$ converge to $w$ also in $L^p(S)$.

Thus \eqref{resto} follows if we show that 
\begin{equation*}
\lim_{k\to\infty}\sum_{i=1}^{N_k}\int_{S_i}W(w_k^i)\dx=0\,,
\end{equation*}
with $w^i_k$ defined as in \eqref{w^i} with $u_k$ and $v_k$ in place of $u$ and $v$ respectively. 
Let 
\begin{equation*}
w_k\defas\begin{cases}
w^i_k&\text{ if } x\in S_i\,, i=1,\dots N_k\,,\\
w&\text{ otherwise}\,;
\end{cases}
\end{equation*}
then clearly $w_k$ converges to $w$ in $L^p(S)$, moreover since $W$ is continuous 
\begin{equation*}
0\le \lim_{k\to\infty}\sum_{i=1}^{N_k}\int_{S_i}W(w_k^i)\dx\le \lim_{k\to\infty}\int_SW(w_k)\dx=
\lim_{k\to\infty}\int_SW(u_k)\dx\le \lim_{k\to\infty}\e_k C=0\,.
\end{equation*}

\end{proof}
%
With the help of Proposition \ref{prop:fund-est}, we can now prove the following $\Gamma$-convergence result.  
\begin{theorem}\label{thm:int-rep}
Let $\F_k$ be as in~\eqref{F_e}. Then there exist a subsequence $(\F_{k_j})$ of $(\F_k)$ and a functional $\F_\infty\colon L^1_\loc(\R^n)\times\A\longrightarrow [0,+\infty]$ such that for every $A\in\A$ the functionals $\F_{k_j}(\cdot\, ,A)$ $\Gamma$-converge in $ L^1(A)$ to $\F_\infty(\cdot\,,A)$. Moreover, $\F_\infty$ is given by
\begin{equation*}
\F_\infty(u,A)\defas
\begin{cases}
\displaystyle\int_{S_u\cap A} \hat f(x,\nu_u)\dHn\, &\text{if}\ u \in BV(A)\, ,\ v\in\{0,1\}\, \text{a.e.\ in } A\, ,\\
+\infty &\text{otherwise}\,,
\end{cases}
\end{equation*}
with  ${\hat f}\colon \R^n\x\S^{n-1}\to[0,+\infty)$ defined as
\begin{equation}\label{eq:f-hat}
 \hat f(x,\nu):=\limsup_{\rho\to 0}\frac{1}{\rho^{n-1}}\m(u_{x}^\nu,Q_\rho^\nu(x))\,,
\end{equation}
for every $x\in \R^n$  and $\nu\in \Sph^{n-1}$,
where for $A\in\A$ and $\bar{u}\in BV(A,\{0,1\})$
\begin{align*}\label{def:m}
\m(\bar{u},A)\defas\inf\{\F_\infty(u,A)\colon u\in BV(A;\{0,1\})\, ,\ u=\bar{u}\ \text{near}\ \partial A\}\,.
\end{align*}
\end{theorem}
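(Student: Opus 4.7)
The plan is to deploy the classical localisation method of $\Gamma$-convergence, following \cite[Chapters 14--18]{DM} and \cite[Chapters 10, 11]{BDf}, with Proposition \ref{prop:fund-est} serving as the uniform gluing tool. By the general compactness theorem for $\Gamma$-convergence applied over a countable dense subfamily $\mathcal D \subset \A$ (e.g.\ finite unions of open cubes with rational vertices) and a diagonal argument, I extract $(\F_{k_j})$ such that on every $A \in \mathcal D$ the $\Gamma$-$\liminf$ and $\Gamma$-$\limsup$ in $L^1(A)$ coincide; call the common value $\F'(u,A)$. I then define $\F_\infty(u, A) := \sup\{\F'(u, A') : A' \in \mathcal D,\ A' \wcont A\}$. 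The lower bound in \eqref{est:bounds-Fe} together with the classical compactness and $\Gamma$-liminf results of \cite{Mo} imply that $\F_\infty(\cdot, A)$ is finite only on $BV(A; \{0,1\})$ with $c_1 c_p \H^{n-1}(S_u \cap A) \le \F_\infty(u, A)$; a matching upper bound $c_2 c_p \H^{n-1}(S_u \cap A)$ is obtained by taking $\bar u^{\nu_u}_{x, \e_k}$ as a recovery competitor on a thin tubular neighbourhood of $S_u$ and invoking \ref{hyp:ub-f} together with \eqref{1dim-energy-bis}.

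Next I verify the hypotheses of an abstract integral representation theorem for surface functionals on $BV(\cdot;\{0,1\})$ of the type in \cite{BDf}: locality (immediate from the construction), $L^1$-lower semicontinuity (inherited from $\Gamma$-convergence), growth bounds (just established), and the measure property of $A \mapsto \F_\infty(u, A)$ for fixed $u$. Superadditivity follows from locality and inner regularity; subadditivity is the core of the argument and is derived from Proposition \ref{prop:fund-est}: given $A \wcont A'$ and $B \in \A$ with recovery sequences $u_k \to u$ on $A'$ and $v_k \to u$ on $B$ of uniformly bounded $\F_k$-energy, part (i) produces $w_k$ with
\[
\F_k(w_k, A \cup B) \le (1+K\e_k)\bigl(\F_k(u_k,A') + \F_k(v_k,B)\bigr) + \omega_k(u_k,v_k,A,A',B),
\]
and part (iii) forces $\omega_k \to 0$ because $u_k$ and $v_k$ share the same $L^1$-limit on $(A' \setminus \overline A) \cap B$; passing to the $\Gamma$-$\liminf$ yields $\F_\infty(u, A \cup B) \le \F_\infty(u, A') + \F_\infty(u, B)$. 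With all hypotheses verified, the integral representation theorem produces a Borel density $\hat f$ and the surface-integral form of $\F_\infty$.

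The identification $\hat f(x, \nu) = \limsup_{\rho \to 0} \rho^{1-n}\, \m(u_x^\nu, Q_\rho^\nu(x))$ then follows by testing the representation on the pure jump $u_x^\nu$ on cubes $Q_\rho^\nu(x)$ and computing the Radon-Nikodym derivative of the measure $A \mapsto \F_\infty(u_x^\nu, A)$ with respect to $\H^{n-1} \mres \Pi^\nu(x)$; locality and a further gluing via Proposition \ref{prop:fund-est} allow one to replace the restricted $\F_\infty$-value by its minimum $\m(u_x^\nu, Q_\rho^\nu(x))$ over the class with the correct boundary datum, since competitors with arbitrary boundary trace can be corrected at asymptotically negligible cost. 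The main obstacle is precisely the verification of subadditivity: the gluing region between $(u_k)$ and $(v_k)$ has width of order $\e_k$ and must absorb the singular potential $W(w^i)/\e_k$ without spoiling the prefactor $1 + o(1)$, which is why the averaging over $N_k \sim 1/\e_k$ nested cut-off strips built into Proposition \ref{prop:fund-est} is essential; once that proposition is in hand, the remainder of the argument reduces to a mechanical application of the localisation machinery.
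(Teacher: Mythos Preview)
Your proposal is correct and follows essentially the same localisation strategy as the paper: extract a subsequence via $\Gamma$-compactness, use the Modica--Mortola bounds \eqref{est:bounds-Fe} to constrain the domain and obtain two-sided growth estimates, derive subadditivity of the set function from the fundamental estimate (Proposition~\ref{prop:fund-est}) exactly as you describe, and then invoke an abstract integral-representation theorem for surface functionals. The only cosmetic differences are that the paper packages the compactness step through \cite[Theorem~16.9]{DM} (inner-regular envelopes) rather than an explicit countable dense family, and it cites the Ambrosio--Braides representation theorem \cite[Theorem~3.1]{AmBra} (which already outputs the formula~\eqref{eq:f-hat} for $\hat f$ directly), whereas you refer to \cite{BDf} and sketch the derivative computation by hand.
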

\begin{proof}  The proof is rather standard, thus here we only sketch it and refer to \cite{DM} for more details. 
By \cite[Theorem 16.9]{DM} there exists a subsequence $(k_j)$, with $k_j \to +\infty$ as $j \to +\infty$, such that the inner regular envelopes of the $\Gamma$-lower limit and of the $\Gamma$-upper limit coincide. More precisely for every $u\in  L^1_\loc(\R^n)$ and every $A\in \A$
	\begin{equation}\label{eq:Gamma-bar}
		\sup_{A'\subset \subset A,\, A'\in \A}\F'(u,A')\quad =\sup_{A'\subset \subset A,\, A'\in \A}\F''(u,A')=:\F_\infty(u,A)\,,
	\end{equation}
 with $\F',\F'' \colon  L^1_\loc(\R^n)\times \A \longrightarrow [0,+\infty]$ defined as
 \begin{equation*}
 	\F'(\,\cdot\,,A) \defas \Gamma\hbox{-}\liminf_{k_j \to +\infty}\F_{k_j}(\,\cdot\,,A)\quad \text{and}\quad
 	\F''(\,\cdot\,,A) \defas \Gamma\hbox{-}\limsup_{k_j \to +\infty}\F_{k_j}(\,\cdot\,,A)\,.
 \end{equation*}
Now  by recalling Remark \ref{rem:bounds-Fe} and \cite[Theorem 1]{MoMo} we can find a constant $C>0$ such that
	\begin{equation}\label{est:bounds-F'-F''}
		\begin{split}
			\frac{1}{C}\H^{n-1}(S_u\cap A)\leq\F'(u,A)\leq\F''(u,A)\leq C\H^{n-1}(S_u\cap A)\,,
		\end{split}
	\end{equation}
	for every $A\in\A$ and every $u\in BV(A;\{0,1\})$ and
	\begin{equation}\label{eq:out-domain}
		\F'(u,A)=\F''(u,A)= +\infty \quad \text{if  } \; u\notin BV(A;\{0,1\}) \,. 
	\end{equation}
In particular, \eqref{est:bounds-F'-F''} together with \cite[Proposition 18.6]{DM} and Proposition~\ref{prop:fund-est} imply that 
	\begin{equation*}
		\F_\infty(u,A)=\F'(u,A)=\F''(u,A) \quad \text{if}\quad u\in BV(A;\{0,1\})\,,
	\end{equation*}
	while \eqref{eq:Gamma-bar} and \eqref{eq:out-domain} yield
	\begin{equation*}
		\F_\infty(u,A)=\F'(u,A)=\F''(u,A)=+\infty \quad \text{if  } \; u\notin BV(A;\{0,1\}) \,. 
	\end{equation*}
Hence, $\F_\infty(\cdot, A)$ coincides with the $\Gamma$-limit of $\F_{k_j}(\cdot,A)$ on $L^1(\R^n)$, for every $A\in \A$.\\
	
We now observe that  the functional $\F_\infty$ defined in \eqref{eq:Gamma-bar} satisfies the following properties:  the functional $\F_\infty(\cdot,A)$ is $L^1(\R^n)$ lower semicontinuous \cite[Propositions 6.7--6.8 and Remark 15.10]{DM} and local \cite[Remark 15.25 and Proposition 16.15]{DM}, while the set function $\F_\infty(u,\cdot)$ is inner regular, increasing and superadditive \cite[Propositions 6.7, 16.12 and Remark 15.10]{DM}. Finally by means of the fundamental estimate Proposition \ref{prop:fund-est} we can appeal to \cite[Proposition 18.4]{DM} to deduce that $\F_\infty(u,\cdot)$ is also a subadditive set function. 
	This together with the measure-property criterion of De Giorgi and Letta (see \textit{e.g.}, \cite[Theorem 14.23]{DM}) imply that for every $u\in L^1_\loc(\R^n)$ the set function $\F_\infty(u,\cdot)$ is the restriction to $\A$ of a Borel measure.   \\
Thus we can invoke  \cite[Theorem 3.1]{AmBra} (see also \cite[Theorem 3]{BFLM02}) and conclude that that for every $A\in \A$ and $u\in BV(A;\{0,1\})$ we can represent the $\Gamma$-limit $\F_\infty$ can be represented in an integral form as  
\begin{equation*}
\F_\infty(u,A)=
\int_{S_u\cap A}\hat f(x,\nu_u)\dHn\,,
\end{equation*}
with $ \hat f$ given by~\eqref{eq:f-hat}.
\end{proof}


 \section{Identification of the surface integrand}\label{sect:surface} 
\noindent In this section we identify the surface integrand $ \hat f$. Namely, we show that $\hat f$ coincides with both $f'$ and $f''$, defined as in \eqref{f'} and \eqref{f''}  with respect to the $\Gamma$-converging subsequence. 

\medskip
\begin{proposition}\label{p:surface-term}
	Let  $(f_k) \subset \mathcal F$ ; let $(k_j)$ and $\hat f$ be as in Theorem~\ref{thm:int-rep}. Then, it holds 
	\begin{align*}
	\hat f(x,\nu)=f'(x,\nu) = f''(x,\nu)\,,
	\end{align*}
	for every $x\in\R^n$ and $\nu\in\Sph^{n-1}$, where $f'$ and $f''$ are, respectively, as in \eqref{f'} and \eqref{f''}, with $k$ replaced by $k_j$.
\end{proposition}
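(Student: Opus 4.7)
Since $f'(x,\nu)\le f''(x,\nu)$ holds by definition, it suffices to prove $\hat f(x,\nu)\le f'(x,\nu)$ and $f''(x,\nu)\le\hat f(x,\nu)$. Both inequalities rest on gluing constructions that exchange the sharp boundary data $u_x^\nu$ (admissible in $\m$) for the smoothed data $\bar u^\nu_{x,\e_{k_j}}$ (admissible in $\m_{k_j}$). The essential tools are: the $\Gamma$-convergence along $(k_j)$ from Theorem \ref{thm:int-rep}, the fundamental estimate (Proposition \ref{prop:fund-est}), the compactness of Remark \ref{initial-rem}(c), and the one-dimensional profile bound of Remark \ref{initial-rem}(d). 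The key quantitative input is that $\bar u^\nu_{x,\e_k}$ equals $u_x^\nu$ outside a slab of width $2\e_k$ and converges to $u_x^\nu$ in $L^1_\loc$; hence interchanging the two data types on an annular collar of width $\delta$ inside a cube of side $\rho$ costs only $O(\delta\rho^{n-2})$, sub-leading with respect to the surface scale $\rho^{n-1}$.

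\textbf{Proof of $f''\le\hat f$.} Fix $x,\nu,\rho$, $\eta>0$, and pick $v\in BV(Q_\rho^\nu(x);\{0,1\})$ admissible in $\m(u_x^\nu,Q_\rho^\nu(x))$ with $v=u_x^\nu$ on $Q_\rho^\nu(x)\sm\overline{Q_{\rho-\delta_v}^\nu(x)}$ and $\F_\infty(v,Q_\rho^\nu(x))\le\m(u_x^\nu,Q_\rho^\nu(x))+\eta$. By Theorem \ref{thm:int-rep} take a recovery sequence $v_j\in W^{1,p}(Q_\rho^\nu(x))$, $0\le v_j\le 1$, $v_j\to v$ in $L^1$, with $\F_{k_j}(v_j,Q_\rho^\nu(x))\to\F_\infty(v,Q_\rho^\nu(x))$. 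For $\delta\in(0,\delta_v)$, apply Proposition \ref{prop:fund-est}(i) with $A=Q_{\rho-\delta}^\nu(x)\wcont A'=Q_{\rho-\delta/2}^\nu(x)$ and $B=Q_\rho^\nu(x)\sm\overline{Q_{\rho-2\delta}^\nu(x)}$ to glue $v_j$ on $A'$ with $\bar u^\nu_{x,\e_{k_j}}$ on $B$. Since $A\cup B=Q_\rho^\nu(x)$ and $B\sm\overline{A'}=Q_\rho^\nu(x)\sm\overline{Q_{\rho-\delta/2}^\nu(x)}$, the glued $w_j$ is admissible in $\m_{k_j}(\bar u^\nu_{x,\e_{k_j}},Q_\rho^\nu(x))$. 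In $\nu$-rotated coordinates, $B$ decomposes into $2(n-1)$ product slabs with $(n-1)$-dimensional cross-section of size $O(\delta\rho^{n-2})$ (the two $\nu$-perpendicular faces contribute nothing since $\bar u^\nu_{x,\e_{k_j}}=u_x^\nu$ is constant there for $\e_{k_j}<\delta$), so Remark \ref{initial-rem}(d) gives $\F_{k_j}(\bar u^\nu_{x,\e_{k_j}},B)\le C\delta\rho^{n-2}$. Since $v_j,\bar u^\nu_{x,\e_{k_j}}\to u_x^\nu$ in $L^1$ on $(A'\sm\overline A)\cap B$, Proposition \ref{prop:fund-est}(iii) yields $\omega_{k_j}\to 0$. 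Taking $\limsup_j$, letting $\delta\to 0$, then $\eta\to 0$, dividing by $\rho^{n-1}$ and applying $\limsup_{\rho\to 0}$ yields the claim.

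\textbf{Proof of $\hat f\le f'$.} Choose $\rho_m\to 0$ realising the $\limsup$ in $\hat f(x,\nu)$ and set $\delta_m=\rho_m/m$. For fixed $m$, extract a subsequence $(k_\ell)\subset(k_j)$ with $\m_{k_\ell}(\bar u^\nu_{x,\e_{k_\ell}},Q_{\rho_m-\delta_m}^\nu(x))\to\liminf_{j\to\infty}\m_{k_j}(\bar u^\nu_{x,\e_{k_j}},Q_{\rho_m-\delta_m}^\nu(x))$, and take $u_\ell\in\Adm(\bar u^\nu_{x,\e_{k_\ell}},Q_{\rho_m-\delta_m}^\nu(x))$ with $\F_{k_\ell}(u_\ell,Q_{\rho_m-\delta_m}^\nu(x))\le\m_{k_\ell}+1/\ell$. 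Extend $u_\ell$ by $\bar u^\nu_{x,\e_{k_\ell}}$ on the outer collar $Q_{\rho_m}^\nu(x)\sm Q_{\rho_m-\delta_m}^\nu(x)$; the extension $\tilde u_\ell$ lies in $W^{1,p}(Q_{\rho_m}^\nu(x))$ because $u_\ell=\bar u^\nu_{x,\e_{k_\ell}}$ near $\partial Q_{\rho_m-\delta_m}^\nu(x)$, and Remark \ref{initial-rem}(d) applied to the collar gives $\F_{k_\ell}(\tilde u_\ell,Q_{\rho_m}^\nu(x))\le\m_{k_\ell}+1/\ell+C\delta_m\rho_m^{n-2}$. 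By Remark \ref{initial-rem}(c), along a further subsequence $\tilde u_\ell\to u^m$ in $L^1(Q_{\rho_m}^\nu(x))$ with $u^m\in BV(Q_{\rho_m}^\nu(x);\{0,1\})$; since $\tilde u_\ell=\bar u^\nu_{x,\e_{k_\ell}}\to u_x^\nu$ in $L^1$ on the outer collar, $u^m=u_x^\nu$ near $\partial Q_{\rho_m}^\nu(x)$, so $u^m$ is admissible in $\m(u_x^\nu,Q_{\rho_m}^\nu(x))$. The $\Gamma$-liminf inequality on $Q_{\rho_m}^\nu(x)$ together with the above bound gives $\m(u_x^\nu,Q_{\rho_m}^\nu(x))\le\liminf_{j\to\infty}\m_{k_j}(\bar u^\nu_{x,\e_{k_j}},Q_{\rho_m-\delta_m}^\nu(x))+C\delta_m\rho_m^{n-2}$. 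Dividing by $\rho_m^{n-1}$ and letting $m\to\infty$, using $((\rho_m-\delta_m)/\rho_m)^{n-1}\to 1$ and $\delta_m/\rho_m\to 0$, the claim follows.

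\textbf{Main obstacle.} The principal difficulty is the mismatch between the sharp-interface constraint $u=u_x^\nu$ appearing in $\m$ and the smoothed constraint $u=\bar u^\nu_{x,\e_k}$ appearing in $\m_k$. The strategy above resolves this by exploiting simultaneously the $L^1$-convergence $\bar u^\nu_{x,\e_{k_j}}\to u_x^\nu$, which is what makes the fundamental-estimate remainder $\omega_{k_j}$ vanish along the transition annulus, and the product-type bound of Remark \ref{initial-rem}(d), which is precisely what keeps the annular collar cost at the sub-leading order $\delta\rho^{n-2}$ rather than $\rho^{n-1}$; without this latter bound the error would swamp the surface energy we are trying to identify.
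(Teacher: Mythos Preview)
Your proof is correct and follows essentially the same approach as the paper: both directions combine the $\Gamma$-convergence of Theorem~\ref{thm:int-rep}, the fundamental estimate, compactness, and the one-dimensional collar bound of Remark~\ref{initial-rem}\ref{rem:1dim-energy}. The only cosmetic difference is that in the inequality $\hat f\le f'$ the paper extends near-minimisers outward to a slightly \emph{larger} cube $Q_{(1+\eta)\rho}^\nu(x)$ before applying the $\Gamma$-liminf, whereas you work with near-minimisers on a slightly \emph{smaller} cube $Q_{\rho_m-\delta_m}^\nu(x)$ and extend outward to $Q_{\rho_m}^\nu(x)$; both variants exploit the same $O(\delta\rho^{n-2})$ collar estimate and are equivalent.
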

\begin{proof}

For notational simplicity, in what follows we still denote with $k$ the index of the subsequence provided by Theorem~\ref{thm:int-rep}.

\medskip
By definition we have  $f'\le f''$, hence to conclude we need to show that $\hat f\le f'$ and $\hat f\ge f''$.  We divide the proof into two steps.

\medskip

\emph{Step 1:} In this step we show that $\hat f(x,\nu)\le f'(x,\nu)$ for every $x\in\R^n$ and $\nu\in\Sph^{n-1}$.

Let $\rho>0$ and $\eta>0$ be fixed, and for every $k$ choose $u_k \in\Adm(\bar{u}_{x,\e_k}^\nu,A)$ satisfying
\begin{align}
	\F_{k}(u_k,Q_\rho^\nu(x)) &\leq \m_k(\bar u_{x,\e_k}^\nu,Q_\rho^\nu(x)) +\eta\rho^{n-1}\label{eq:q-min-e}\\
		&\le (c_2 C_{\uu}+\eta)\rho^{n-1}\,.\label{uniformbound:me}
\end{align}
Here the last inequality follows from~\eqref{1dim-energy-bis}.  
%
Up to estracting a subsequence we may assume that
\[
\lim_{k\to +\infty} \F_{{k}}(u_{k},Q_\rho^\nu(x)) = \liminf_{k \to +\infty}\F_{k}(u_k,Q_\rho^\nu(x))\,.
\]
Then extend $u_k$ to a $W^{1,p}_{\loc}(\R^n)$-function by setting
\begin{align*}
w_k\defas
\begin{cases}
u_k &\text{in } Q_\rho^\nu(x)\,,\\
\bar u_{x,\e_k}^\nu &\text{in } \R^n\sm Q_\rho^\nu(x)\,.
\end{cases}
\end{align*}

From ~\eqref{uniformbound:me}, \eqref{est:bounds-Fe} and \cite[Lemma 4.1]{MoMo} there exists a subsequence (not relabelled) such that
\[w_{k}\to u\quad  \text{in}\quad   L^p_{\loc}(\R^n)\,,\]
for some $u\in L^p_{\loc}(\R^n;\R^m)\cap BV(Q^\nu_{(1+\eta)\rho}(x);\{0,1\})$ such that $u=u_{x}^\nu$ in a neighbourhood of $\partial Q^\nu_{(1+\eta)\rho}(x)$.
Hence by $\Gamma$-convergence, \eqref{1dim-energy-bis} and \eqref{eq:q-min-e} we obtain
\begin{align*}
\F_\infty(u,Q^\nu_{(1+\eta)\rho}(x)) &\leq \liminf_{k\to +\infty} \F_{{k_j}}(w_{k},Q_{(1+\eta)\rho}^\nu(x))\\
&\leq\lim_{k \to +\infty} \F_{k_j}(u_{k},Q_{\rho}^\nu(x))+c_2C_\uu\big((1+\eta)^{n-1}-1)\rho^{n-1}\\
&\leq \liminf_{k\to+\infty}\m_k(\bar u_{x,\e_k}^\nu,Q_\rho^\nu(x))+\eta\rho^{n-1}+c_2C_\uu\big((1+\eta)^{n-1}-1\big)\rho^{n-1}\,.
\end{align*}
Combining the above inequality with
\begin{equation*}\label{eq:trivial-ineq}
\m(u^\nu_{x},Q^\nu_{(1+\eta)\rho}(x)) \leq \F_\infty(u,Q^\nu_{(1+\eta)\rho}(x))\,, 
\end{equation*}
dividing by $\rho^{n-1}$ and passing to the limsup as $\rho\to0$ we infer
\begin{equation*}
(1+\eta)^{n-1}{\hat f}(x,\nu)\leq
f'(x,\nu)
+\eta+c_2C_\uu\big((1+\eta)^{n-1}-1\big)\,.
\end{equation*}
Eventually we conclude by sending $\eta\to0$.

\medskip

\emph{Step 2:} In this step we show that $\hat f(x,\nu)\ge f''(x,\nu)$ for every $x\in\R^n$, $\nu\in\Sph^{n-1}$, and $\rho>0$.\\
We fix $\eta>0$ and choose $u\in BV(Q_\rho^\nu(x);\{0,1\})$ with $u=u_{x}^\nu$ near $\partial Q_\rho^\nu(x)$ and
\begin{equation}\label{eq:q-min-m}
\F_\infty(u,Q_\rho^\nu(x))\leq\m(u_{x}^\nu,Q_\rho^\nu(x))+\eta\,.
\end{equation}
We extend $u$ to the whole $\R^n$ to the jump function $u_{x}^\nu$ in $\R^n\sm Q_\rho^\nu(x)$. 
By $\Gamma$-convergence there exists a sequence $u_k$ converging to $u$ in $L^1_\loc(\R^n)$ such that
\begin{equation}\label{approx:min-rec}
\lim_{k \to +\infty}\F_{k}(u_k,Q_\rho^\nu(x))= \F_\infty(u,Q_\rho^\nu(x))\,.
\end{equation}
We next properly modify the sequence $u_k$ in order to obtain a new sequence $\hat u_k\in\Adm(\bar{u}_{x,\e_k}^\nu,A)$. 
To this end let $0<\rho''<\rho'<\rho$ be such that $u=u_{x}^\nu$ on $Q_\rho^\nu(x)\sm\overline{Q}_{\rho''}^\nu(x)$. Then we apply Proposition~\ref{prop:fund-est} with $A=Q_{\rho''}^\nu(x)$, $A'=Q_{\rho'}^\nu(x)$, $B=Q_\rho^\nu(x)\sm\overline{Q}_{\rho''}^\nu(x)$ and $u=u_k$, $v=\bar u^\nu_{x,\e_k}$. In this way to get a new sequence $\hat{u}_k\subset W^{1,p}_\loc(\R^n)$ converging to $u$ in $L^p(Q_\rho^\nu(x))$ such that 
$$\hat{u}_k=u_k\quad\text{in} \quad Q_{\rho''}^\nu(x),\quad \hat{u}_k=\bar u_{x,\e_k}^\nu\quad\text{in} \quad Q_\rho^\nu(x)\sm\overline{Q}_{\rho'}^\nu(x),$$
and
\begin{equation}\label{approx:min-bc}
\begin{split}
\limsup_{k\to+\infty}\F_{k}(\hat{u}_k,Q_\rho^\nu(x))&\leq \limsup_{k\to+\infty}\Big(\F_{k}(u_k,Q_{\rho'}^\nu(x))+\F_{k}(\bar u_{x,\e_k}^\nu, Q_{\rho}^\nu(x)\sm\overline{Q}_{\rho''}^\nu(x))\Big)\\
&\leq \F_\infty(u,Q_\rho^\nu(x)) +c_2C_\uu\L^{n-1}\big(Q_{\rho}'\sm\overline{Q'}_{\!\!\rho''}\big)\,.
\end{split}
\end{equation}
Here the second inequality follows  from~\eqref{approx:min-rec} and~\eqref{1dim-energy-bis} together with~\eqref{f-value-at-0}.
 
Recalling that $\hat u_k$ is a test function for the definition of $\m_k(\bar u_{x,\e_k}^\nu,Q_\rho^\nu(x))$, combining~\eqref{eq:q-min-m} and~\eqref{approx:min-bc} we get
\begin{align*}
\limsup_{k\to+\infty}\m_k(\bar u_{x,\e_k}^\nu,Q_\rho^\nu(x))\EEE &\leq \m(u_{x}^\nu,Q_\rho^\nu(x))+\eta+C  (\rho^{n-1}-(\rho'')^{n-1})\,.
\end{align*}
We divide the above inequality by $\rho^{n-1}$ and conclude by passing t to the limit first as $\rho''\to\rho$, second as $\eta\to0$ and eventually to the limsup as $\rho \to 0$. 
\end{proof}
%

%
%
\section{Properties of $f',f''$}\label{sect:prop}
\noindent This section is devoted to prove some properties satisfied by the functions $f'$, and $f''$ defined in \eqref{f'} and  \eqref{f''}  respectively.  To this purpose is convenient to characterise them in an alternative way.
Let us fix first some notation.
For $\rho,\delta,\e_k>0$ with $\rho> \delta>2\e_k$ we set
\begin{equation*}
	\m_{k}^{\delta}(\bar u_{x,\e_k}^\nu,Q_\rho^\nu(x))\defas\inf\{\F_k(u,Q_\rho^\nu(x))\colon u\in \Adm^\delta(\bar u_{x,\e_k}^\nu,Q_\rho^\nu(x))\},
\end{equation*}
where 
\begin{equation*}
	\Adm^\delta(\bar u_{x,\e_k}^\nu,Q_\rho^\nu(x))\defas\{u\in W^{1,p}(Q_\rho^\nu(x)),\; 0\leq u\leq 1 \colon   u=\bar u_{x,\e_k}^\nu\ \text{in}\ Q_\rho^\nu(x) \setminus \overline Q_{\rho-\delta}^\nu(x)\}\,.
\end{equation*}
Moreover, let $f'_\rho,f''_\rho:\R^n\x\S^{n-1}\to[0,+\infty]$ be the functions defined as
\begin{equation}\label{c:f'-rho}
	\begin{split}
		f'_\rho(x,\nu) &\defas \inf_{\delta>0}\liminf_{k\to+\infty}\m_{k}^{\delta}(\bar u_{x,\e_k}^\nu,Q_\rho^\nu(x))= \lim_{\delta\to 0}\liminf_{k\to+\infty}\m_{k}^{\delta}(\bar u_{x,\e_k}^\nu,Q_\rho^\nu(x))\,,\\[4pt]
		f''_\rho(x,\nu) &\defas \inf_{\delta>0}\limsup_{k\to+\infty}\m_{k}^{\delta}(\bar u_{x,\e_k}^\nu,Q_\rho^\nu(x))=\lim_{\delta\to 0}\limsup_{k\to+\infty}\m_{k}^{\delta}(\bar u_{x,\e_k}^\nu,Q_\rho^\nu(x))\,.
	\end{split}
\end{equation}
Next we prove the following technical Lemma.
\begin{lemma}\label{lemma:f'-f''}
	Let $f'_\rho,f''_\rho$ be as in \eqref{c:f'-rho}. Then the following hold:
	\begin{enumerate}[label=$(\roman*)$]
		\item \label{i}The restrictions of $f'_\rho$, $f''_\rho$ to the sets $\R^n\x\widehat{\S}^{n-1}_+$ and $\R^n\x\widehat{\S}^{n-1}_-$ are upper semicontinuous;
		\item\label{ii} The functions $\rho \to (f'_{\rho}(x,\nu)-c_2C_\uu\rho^{n-1})$ and $\rho \to (f''_{\rho}(x,\nu)-c_2C_\uu\rho^{n-1})$ are nonincreasing on $(0,+\infty)$;
		\item\label{iii} For every $x\in \R^n$ and every $\nu \in \S^{n-1}$ there hold
		\begin{equation*}
			f'(x,\nu)=\limsup_{\rho\to 0}\frac{1}{\rho^{n-1}}f_\rho'(x,\nu)\;\text{ and }\; f''(x,\nu)=\limsup_{\rho\to 0}\frac{1}{\rho^{n-1}}f_\rho''(x,\nu)\,,
		\end{equation*}
		with $f'$ and $f''$ as in \eqref{f'} and \eqref{f''} respectively.
	\end{enumerate}
\end{lemma}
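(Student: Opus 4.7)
I plan to prove the three parts in the order (ii), (iii), (i), since (i) is by far the most technical.

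For (ii) I will use a simple extension-by-boundary-datum argument. Let $\rho_1 < \rho_2$, fix $\delta > 0$, and take $k$ large enough that $\rho_1 > 2\e_k$. Given any competitor $u \in \Adm^\delta(\bar u^\nu_{x,\e_k}, Q^\nu_{\rho_1}(x))$, define $\tilde u = u$ on $Q^\nu_{\rho_1}(x)$ and $\tilde u = \bar u^\nu_{x,\e_k}$ on $Q^\nu_{\rho_2}(x) \setminus Q^\nu_{\rho_1}(x)$. Since $u$ already matches $\bar u^\nu_{x,\e_k}$ in the boundary strip of width $\delta$, the gluing is $W^{1,p}$ and lies in $\Adm^{\delta + \rho_2 - \rho_1}(\bar u^\nu_{x,\e_k}, Q^\nu_{\rho_2}(x))$. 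The energy splits additively, and the 1D-profile computation in Remark~\ref{initial-rem}\ref{rem:1dim-energy} (which gives equality $\mathcal M_k(\bar u^\nu_{x,\e_k}, Q^\nu_\rho(x)) = C_\uu \rho^{n-1}$ as soon as $\rho > 2\e_k$) together with \ref{hyp:ub-f} bounds the extension contribution by $c_2 C_\uu(\rho_2^{n-1} - \rho_1^{n-1})$. Taking the infimum over $u$, then $\liminf_k$ (or $\limsup_k$), and finally $\inf$ over $\delta \to 0^+$ (noting $\inf_{\delta' > 0}$ is dominated by $\inf_{\delta' > \rho_2-\rho_1}$) yields the monotonicity.

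For (iii) one inequality is immediate: since $\Adm^\delta \subset \Adm$, we have $\liminf_k \m_k \leq \liminf_k \m^\delta_k$ for every $\delta > 0$, hence $\liminf_k \m_k \leq f'_\rho(x,\nu)$; dividing by $\rho^{n-1}$ and taking $\limsup_{\rho\to 0}$ gives $f'(x,\nu) \leq \limsup_{\rho \to 0} f'_\rho(x,\nu)/\rho^{n-1}$. For the reverse, I take an almost-minimizer $u_k$ for $\m_k(\bar u^\nu_{x,\e_k}, Q^\nu_\rho(x))$ and extend it by $\bar u^\nu_{x,\e_k}$ to a competitor on $Q^\nu_{\rho+\delta}(x)$, which belongs to $\Adm^\delta$ on the larger cube (the same construction as in (ii)). This yields
\begin{equation*}
f'_{\rho+\delta}(x,\nu) \leq \liminf_k \m_k(\bar u^\nu_{x,\e_k}, Q^\nu_\rho(x)) + c_2 C_\uu\bigl((\rho+\delta)^{n-1} - \rho^{n-1}\bigr).
\end{equation*}
Setting $\delta = \delta(\rho) = \rho^2$, dividing by $(\rho+\delta)^{n-1}$, and taking $\limsup_{\rho\to 0}$, the error term vanishes and the ratio $\rho^{n-1}/(\rho+\delta)^{n-1} \to 1$, which gives $\limsup_{r\to 0} f'_r(x,\nu)/r^{n-1} \leq f'(x,\nu)$. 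The same argument proves the analogous identity for $f''$.

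For (i), which I expect to be the main obstacle, I fix $(x_0,\nu_0) \in \R^n \times \widehat{\Sph}^{n-1}_+$ and a sequence $(x_j,\nu_j) \to (x_0,\nu_0)$ in $\R^n \times \widehat{\Sph}^{n-1}_+$, and use the isometry transport induced by the continuity of $\nu \mapsto R_\nu$ on $\widehat{\Sph}^{n-1}_+$. For $\eta > 0$ I pick $\delta_0$ small enough that $\liminf_k \m^{\delta_0}_k(\bar u^{\nu_0}_{x_0,\e_k}, Q^{\nu_0}_\rho(x_0)) \leq f'_\rho(x_0,\nu_0) + \eta$, together with near-optimal competitors $u_k$. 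Define $T_j(z) \defas R_{\nu_j} R_{\nu_0}^{-1}(z-x_0) + x_j$, which for $j$ large is a well-defined isometry mapping $Q^{\nu_0}_\rho(x_0)$ onto $Q^{\nu_j}_\rho(x_j)$, and set $v^j_k \defas u_k \circ T_j^{-1}$. A direct verification using $R_\nu e_n = \nu$ shows $\bar u^{\nu_0}_{x_0,\e_k} \circ T_j^{-1} = \bar u^{\nu_j}_{x_j,\e_k}$, so $v^j_k \in \Adm^{\delta_0}(\bar u^{\nu_j}_{x_j,\e_k}, Q^{\nu_j}_\rho(x_j))$. The hard part is controlling
\begin{equation*}
\F_k(v^j_k, Q^{\nu_j}_\rho(x_j)) - \F_k(u_k, Q^{\nu_0}_\rho(x_0)) = \frac{1}{\e_k}\int_{Q^{\nu_0}_\rho(x_0)} \bigl[f_k(T_j(z), u_k(z), \e_k R_j \nabla u_k(z)) - f_k(z, u_k(z), \e_k \nabla u_k(z))\bigr] \dz,
\end{equation*}
where $R_j = R_{\nu_j} R_{\nu_0}^{-1} \to I$ and $T_j \to \mathrm{id}$ as $j \to \infty$. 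Since $f_k$ is only Borel, pointwise convergence of the integrand fails in general; the standard way around this is to invoke $L^1$-continuity of translations and rotations: the dominating upper bound from \ref{hyp:ub-f} is in $L^1(Q^{\nu_0}_\rho(x_0))$, and a density/Lusin argument (approximating $f_k$ by a continuous function on a set of large measure) together with dominated convergence lets me conclude that the error integral tends to zero as $j \to \infty$, \emph{uniformly} in $k$ on bounded-energy sequences. Taking $\liminf_k$ and then $\limsup_j$ gives $\limsup_j f'_\rho(x_j,\nu_j) \leq f'_\rho(x_0,\nu_0) + \eta$, and letting $\eta \to 0$ concludes. Since $f'_\rho = \inf_\delta$ of a Borel liminf and upper semicontinuity is preserved under infima, the result for $f'_\rho$ follows; the same argument applies verbatim to $f''_\rho$.
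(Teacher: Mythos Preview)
Your arguments for (ii) and (iii) are correct and essentially match the paper's Steps~2 and~3 (the paper uses $\delta=\alpha\rho$ with $\alpha\to 0$ where you use $\delta=\rho^2$, but the idea is identical).

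Part (i), however, has a genuine gap. Your isometry-transport strategy produces the difference
\[
\frac{1}{\e_k}\int_{Q^{\nu_0}_\rho(x_0)}\bigl[f_k(T_j(z),u_k,\e_k R_j\nabla u_k)-f_k(z,u_k,\e_k\nabla u_k)\bigr]\dz,
\]
and you claim this vanishes as $j\to\infty$ via ``$L^1$-continuity of translations and rotations'' plus a Lusin argument. This does not work under the hypotheses of the class $\mathcal F$: the integrands $f_k$ are only Borel in $(x,u,\xi)$, with no continuity whatsoever in $x$ or $\xi$. The $L^1$-continuity of translations applies to shifting a fixed $L^1$ function $z\mapsto h(z)$; here you are shifting only the first slot of $f_k$ while freezing $u_k(z),\nabla u_k(z)$, which is a different operation. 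Concretely, take $f_k(x,u,\xi)=W(u)+|\xi|^p\bigl(1+\chi_{\{\xi_1>0\}}\bigr)$ (this is in $\mathcal F$ with $c_1=1$, $c_2=2$) and a competitor $u_k$ whose gradient points in the $e_2$-direction. Then $R_j\nabla u_k$ acquires a small positive first component for generic rotations $R_j\to I$, and the error integral equals $\e_k^{p-1}\int|\nabla u_k|^p$, which is bounded away from zero and certainly not $o(1)$ as $j\to\infty$. A Lusin approximation of $f_k$ does not help: it controls $f_k$ on a large set in $(x,u,\xi)$-space, but you need pointwise comparison at the \emph{pair} of points $(T_j(z),u_k,\e_k R_j\nabla u_k)$ and $(z,u_k,\e_k\nabla u_k)$, and there is no reason both lie in the good set. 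The claimed uniformity in $k$ is also unjustified, since the $f_k$ vary with $k$.

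The paper avoids the transport entirely. It keeps the near-minimiser $u_k$ on the \emph{original} cube $Q^\nu_{\rho-3\delta_0}(x)$ and uses the fundamental estimate (Proposition~\ref{prop:fund-est}) to glue it to the boundary datum $\bar u^{\nu_j}_{x_j,\e_k}$ across an annular layer, exploiting that for $j$ large the cubes $Q^{\nu_j}_\rho(x_j)$ and $Q^\nu_\rho(x)$ are nested after shrinking by $\delta_0$. The point is that in the gluing layer $u_k$ already equals $\bar u^\nu_{x,\e_k}$, which coincides with $\bar u^{\nu_j}_{x_j,\e_k}$ (both equal $u^\nu_x=u^{\nu_j}_{x_j}\in\{0,1\}$) except on a thin slab $R_{k,j}$ of volume $O(\delta_0\rho^{n-2}h_{k,j})$ with $h_{k,j}\to 0$; Proposition~\ref{prop:fund-est}\ref{fund-ii} then bounds the gluing error by $K\L^n(R_{k,j})$. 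This uses only the two-sided bounds \ref{hyp:lb-f}--\ref{hyp:ub-f}, never any continuity of $f_k$.
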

\begin{proof} We prove the statement only for $f'_\rho$ and we show \ref{i} only for its restriction to the set $\R^n\x\widehat{\S}^{n-1}_+$, the other cases can be treated arguing similarly. \\
	
	\step 1 we show the validity of \ref{i}.
	Let $\rho>0$, $x\in\R^n$, and $\nu\in\widehat{\S}^{n-1}_+$ be fixed. 	Let $(x_j, \nu_j)\subset\R^n \times \widehat{\S}^{n-1}_+$ be  a sequence converging to $(x,\nu)$. We want to show that 
			\begin{align}\label{eq:upp-semi-cont}
			\limsup_{j\to+\infty}f_\rho'(x_j,\nu_j)\leq f_\rho'(x,\nu)\,.
		\end{align}
To this aim we notice that \eqref{c:f'-rho} implies the following:
for fixed $\eta>0$ there exists $\delta_\eta>0$ such that  
	\begin{equation}\label{c:delta0-almost-optimal}
		\liminf_{k \to +\infty }\m_k^{\delta}(\bar{u}_{x,\e_k}^\nu,Q_\rho^\nu(x))\leq f_\rho'(x,\nu)+\eta,
	\end{equation}
	for every $\delta\in(0,\delta_\eta)$.  Hence we can choose $\delta_0$ such that  $3\delta_0\in(0,{\delta_\eta})$ and  \eqref{c:delta0-almost-optimal} holds true with $\delta=3\delta_0$.
	%
	Let also $u_k \in \Adm^{3\delta_0}(\bar{u}_{x,\e_k}^\nu,Q_\rho^\nu(x))$ that satisfies
	\begin{equation}\label{almostmin:uppersemicont}
		\F_{k}(u_k,Q_\rho^\nu(x))\leq\m_{k}^{3\delta_0}(u_x^\nu,Q_\rho^\nu(x))+\eta\,.
	\end{equation}
	In what follows we modify $u_k$ in order to obtain a new sequence $\tilde{u}_k \in \Adm^\delta(\bar{u}_{x_j,\e_k}^{\nu_j},Q_\rho^{\nu_j}(x_j))$ for $j$ large enough and any $\delta\in (0,\delta_0)$ without essentially increasing the energy.
We define
\[
h_{k,j}\defas\e_k+|x-x_j|+\frac{\sqrt{n}}{2}\rho|\nu-\nu_j|
\]
and
\begin{equation*}
	R_{k,j}\defas R_\nu\Big(\big(Q'_{\rho-2\delta_0}\sm\overline{Q}'_{\rho-3\delta_0}\big)\x\big(-h_{k,j},h_{k,j}\big)\Big)+x\,.
\end{equation*} 
Since the map $\nu\mapsto R_\nu$ is continuous, there exists $\hat\jmath= \hat\jmath(\delta_0)\in\N$ such that 
	\begin{equation}\label{inclusion:uppersemicont}
	Q_{\rho-5\delta_0}^{\nu_j}(x_j)\subset Q_{\rho-4\delta_0}^\nu(x)\subset	Q_{\rho-2\delta_0}^\nu(x)\subset Q_{\rho-\delta_0}^{\nu_j}(x_j) \,,
	\end{equation}
and
$$R_{k,j}\subset Q_{\rho-2\delta_0}^\nu(x)\sm\overline{Q}^\nu_{\rho-3\delta_0}(x),$$ for every $j\geq \hat \jmath$. 
For later convenience we observe that 
\begin{equation}\label{ouside-R}
u_k(y)=\bar u^\nu_{x,\e_k}(y)= \bar u^{\nu_j}_{x_j,\e_k}(y) \in\{0,1\}\;\text{ for a.e.}\; y\in\Big(Q_{\rho-2\delta_0}^\nu(x)\sm \overline{Q}_{\rho-3\delta_0}^\nu(x)\Big)\sm \overline{R}_{k,j}\,.
\end{equation}
Indeed we have
\begin{equation}\label{indeed}
  \bar u^\nu_{x,\e_k}=u^\nu_x=u^{\nu_j}_{x_j}=\bar u^{\nu_j}_{x_j,\e_k}\; \text{ in }\;\{(y-x)\cdot\nu\ge\e_k\}\cap\{(y-x_j)\cdot\nu_j\ge\e_k\}\,.  
\end{equation}
On the other hand for $y\in \Big(Q_{\rho-3\delta_0+2\e_k}^\nu(x)\sm \overline{Q}_{\rho-3\delta_0}^\nu(x)\Big)\sm \overline{R}_{k,j}$ it holds
\begin{equation*}\label{scalarproduct1}
	|(y-x)\cdot\nu|=|R_\nu^T(y-x)\cdot e_n|>h_{k,j}>\e_k\,,
\end{equation*}
and  by applying the triangular inequality twice we have
\begin{equation*}\label{scalarproduct2}
	\begin{split}
		|(y-x_j)\cdot\nu_j| &> 
		h_{k,j}-|x-x_j|-|y-x||\nu-\nu_j|
		\\ 
		&\ge \e_k+(\frac{\sqrt{n}}{2}\rho-|y-x|)|\nu-\nu_j|
		\geq\e_k\,,
	\end{split}
\end{equation*}
where the last inequality is a consequence of $Q_\rho^\nu(x)\subset B_{\frac{\sqrt{n}}{2}\rho}(x)$.\\
%
Next we apply Proposition \ref{prop:fund-est} with 
\begin{equation*}
A\defas Q^\nu_{\rho-3\delta_0}(x)\,,\quad
A'\defas Q^\nu_{\rho-2\delta_0}(x)\,,\quad
B\defas Q_\rho^{\nu_j}(x_j)\setminus\overline Q^{\nu_j}_{\rho-5\delta_0}(x_j)\,,
\end{equation*}
and $u\defas u_k$, $v\defas \bar u^{\nu_j}_{x_j,\e_k}$. 
Note that thanks to \eqref{inclusion:uppersemicont} the following hold
\begin{equation*}\label{notethat}
    A\cup B=Q_\rho^{\nu_j}(x_j)\,,\quad
    B\setminus\overline A'=Q_\rho^{\nu_j}(x_j)\setminus\overline Q^\nu_{\rho-2\delta_0}(x)\,,
\end{equation*}
and
\begin{equation}\label{notethatbis}
    (A'\setminus\overline A)\cap B=A'\setminus\overline A=Q^\nu_{\rho-2\delta_0}(x)\setminus \overline Q^\nu_{\rho-3\delta_0}(x)\,.
\end{equation}
\medskip 

\noindent
In particular Proposition \ref{prop:fund-est} \ref{fund-i} provides us with $\tilde u_k\in W^{1,p}(Q_\rho^{\nu_j}(x_j))$, $0\le \tilde u_k\le1$ with
\begin{equation}\label{bdry}
\tilde u_k=u_k\quad\text{ a.e. in } Q^\nu_{\rho-3\delta_0}(x)\,,
\quad
\tilde u_k=\bar u^{\nu_j}_{x_j,\e_k}\quad\text{ a.e. in }
Q_\rho^{\nu_j}(x_j)\setminus\overline Q^\nu_{\rho-2\delta_0}(x)\,,
\end{equation}
 and such that
\begin{equation}\label{estimate-energy}
\begin{split}
  \F_k(\tilde u_k,Q_\rho^{\nu_j}(x_j))\le(1+K\e_k) \Big(\F_k(u_k,Q^\nu_\rho(x))+ \F_k(\bar u^{\nu_j}_{x_j,\e_k}&,Q_\rho^{\nu_j}(x_j)\setminus\overline Q^{\nu_j}_{\rho-5\delta_0}(x_j))\Big)\\
  &
+ \omega_k(u_k,u^{\nu_j}_{x_j,\e_k},A,A',B)\,.  
\end{split}
\end{equation}
Moreover from \eqref{notethatbis}, property \eqref{ouside-R} becomes
$$u_k(y)=u^{\nu_j}_{x_j,\e_k}(y)\in\{0,1\} \quad\text{ for a.e. $y$ in }  ((A'\setminus\overline A)\cap B)\setminus R_{k,j}\,.$$
Thus, applying Proposition \ref{prop:fund-est} \ref{fund-ii} with $\tilde S=((A'\setminus\overline A)\cap B)\setminus R_{k,j}$
yields
\begin{equation}\label{estimate-resto}
\omega_k(u_k,u^{\nu_j}_{x_j,\e_k},A,A',B)
\le K\L^n(R_{k,j})\,.
\end{equation}
Now \eqref{1dim-energy-bis} together with \eqref{f-value-at-0} give
\begin{equation}\label{primo}
\F_k(\bar u^{\nu_j}_{x_j,\e_k},Q_\rho^{\nu_j}(x_j)\setminus\overline Q^{\nu_j}_{\rho-5\delta_0}(x_j))
\le c_2C_{\uu}\L^{n-1}\Big(Q'_{\rho}\sm\overline{Q'}_{\!\!\rho-5\delta_0}\big)\Big)\leq C\delta_0\rho^{n-2}\,,
\end{equation}
while
\begin{equation}\label{secondo}
	\L^n(R_{k,j})\le C\delta_0\rho^{n-2}h_{k,j}\,.
\end{equation}
By \eqref{bdry} we have $\tilde{u}_k \in \Adm^\delta(\bar{u}_{x_j,\e_k}^{\nu_j},Q_\rho^{\nu_j}(x_j))$ for any $j\geq \hat \jmath$ and any $\delta\in (0,\delta_0)$.
Hence gathering \eqref{almostmin:uppersemicont},  \eqref{estimate-energy}, \eqref{estimate-resto},\eqref{primo}, and \eqref{secondo} we get
\begin{equation}\label{est:uppersemicont-final}
\begin{split}
\m_{k}^{\delta}(\bar u_{x_j,\e_k}^{\nu_j},Q_\rho^{\nu_j}(x_j))\leq (1+C\e_k)(\m_{k}^{3\delta_0}(\bar u_{x,\e_k}^\nu,Q_\rho^\nu(x))+\eta)
+C\rho^{n-2}\delta_0(1+h_{k,j})\,.
\end{split}
\end{equation}
Using~\eqref{c:delta0-almost-optimal} with $\delta=3\delta_0$ and passing to the limit  in~\eqref{est:uppersemicont-final} first  as $k \to +\infty$, then  as $\delta \to 0$, and finally as $j \to +\infty$ gives
\begin{align*}
	\limsup_{j\to+\infty}f_\rho'(x_j,\nu_j)\leq f_\rho'(x,\nu)+2\eta+C\delta_0\rho^{n-2}\,,
\end{align*}
since $\lim_j\lim_kh_{k,j}=0$.
By letting $\delta_0\to 0$ \eqref{eq:upp-semi-cont} and then $\eta\to0$ we finally conclude.
\medskip

\step 2  we show \ref{ii}.
 Let $0<\rho\le\rho'$, $x\in \R^n$, $\nu\in S^{n-1}$ be fixed. We show that 
 \begin{equation}\label{nonincreasing}
 	f'_{\rho'}(x,\nu)-c_2C_\uu(\rho')^{n-1}
 	\le f'_{\rho}(x,\nu)- c_2C_\uu(\rho)^{n-1}\,.
 \end{equation}
 Let $\eta>0$ be fixed and let $\delta_\eta>0$ be such that \eqref{c:delta0-almost-optimal} holds true for every $\delta\in(0,\delta_\eta)$. For fixed $\delta\in(0,\delta_\eta)$ let also $u_k \in \Adm^{\delta}(\bar{u}_{x,\e_k}^\nu,Q_{\rho'}^\nu(x))$ that satisfies
\begin{equation}\label{almostmin:uppersemicont_bis}
	\F_{k}(u_k,Q_{\rho}^\nu(x))\leq\m_{k}^{\delta}(u_x^\nu,Q_{\rho}^\nu(x))+\eta\,.
\end{equation}
we now extend $u_k$, without relabelling it, to $\bar u^\nu_{x,\e_k}$ in $Q^\nu_{\rho'}(x)\setminus \overline Q^\nu_{\rho}(x)$ so that it belongs to the class $\Adm^{\delta}(\bar{u}_{x,\e_k}^\nu,Q_{\rho'}^\nu(x))$.
 Then \eqref{f-value-at-0} and \eqref{1dim-energy-bis} yield
\begin{equation}\label{est:extension_bis}
	\begin{split}
		\F_k(u_k,Q_{\rho'}^\nu(x)) &\leq\F_k(u_k,Q_{\rho}^\nu(x))+\F_k\big(\bar{u}_{x,\e_k}^\nu,(Q_{\rho'}^\nu(x)\sm\overline{Q}_{\rho}^\nu(x)\big)\\
		&\leq \F_k(u_k,Q_{\rho}^\nu(x))+c_2C_\uu((\rho')^{n-1}-(\rho)^{n-1})\,.
	\end{split}
\end{equation}
Finally letting first $k\to\infty$ and then $\delta\to0$ from \eqref{c:delta0-almost-optimal} and \eqref{almostmin:uppersemicont_bis} we get
\begin{equation*}
f'_{\rho'}(x,\nu)\le f'_{\rho}(x,\nu)+ c_2C_\uu((\rho')^{n-1}-(\rho)^{n-1})+2\eta\,.
\end{equation*}
Eventually by sending $\eta\to0$ we deduce \eqref{nonincreasing}.
\medskip

\step 3  we show \ref{iii}.
	One inequality follows immediately. Indeed as $\Adm^\delta(\bar{u}_{x,\e_k}^\nu,Q_\rho^\nu(x))\subset\Adm(\bar{u}_{x,\e_k}^\nu,Q_\rho^\nu(x))$ for every $\delta \in (0,\rho)$, we have
\[
f'(x,\nu)\leq\limsup_{\rho\to 0}\frac{1}{\rho^{n-1}}f_\rho'(x,\nu)\,.
\] 
It remains to prove the opposite inequality.
For fixed $\rho> 0$, $x\in\R^n$, and $\nu\in\S^{n-1}$ and for every $k\in \N$ such that $\e_k\in(0,\frac{\rho}{2})$ let $u_k\in\Adm(\bar{u}_{x,\e_k}^\nu,Q_\rho^\nu(x))$ satisfy 
\begin{equation}\label{cond:v-extension}
	\F_k(u_k,Q_\rho^\nu(x))\leq\m_k(\bar{u}_{x,\e_k}^\nu,Q_\rho^\nu(x))+\rho^n\,,
\end{equation}
Fix $\alpha>0$, let $\rho_\alpha\defas(1+\alpha)\rho$, and extend $u_k$, without relabelling it, to $\bar{u}_{x,\e_k}^\nu$ in  $Q_{\rho_\alpha}^\nu(x)\sm\overline{Q}_\rho^\nu(x)$. In this way $u_k\in\Adm^\delta(\bar{u}_{x,\e_k}^\nu,Q_{\rho_\alpha}^\nu(x))$  for any $\delta\in(0,\alpha\rho)$ and similarly to \eqref{est:extension_bis} it holds
\begin{equation}\label{est:extension}
	\begin{split}
		\F_k(u_k,Q_{\rho_\alpha}^\nu(x)) &\leq\F_k(u_k,Q_\rho^\nu(x))+\F_k\big(\bar{u}_{x,\e_k}^\nu,(Q_{\rho_\alpha}^\nu(x)\sm\overline{Q}_\rho^\nu(x)\big)\\
		&\leq \F_k(u_k,Q_\rho^\nu(x))+c_2C_\uu((1+\alpha)^{n-1}-1))\rho^{n-1}\,.
	\end{split}
\end{equation}
 Now~\eqref{cond:v-extension} and~\eqref{est:extension} give
\begin{equation*}
	\inf_{\delta>0}\liminf_{k\to+\infty}\m_k^{\delta}(\bar{u}_{x,\e_k}^\nu,Q_{(1+\alpha)\rho}^\nu(x))\leq\liminf_{k\to+\infty}\m_k(\bar{u}_{x,\e_k}^\nu,Q_\rho^\nu(x))+\rho^n+c_2C_\uu((1+\alpha)^{n-1}-1))\rho^{n-1}\,.
\end{equation*}
Rescaling the above inequality by $((1+\alpha)\rho)^{n-1}$ and passing to the limsup as $\rho\to 0$ we infer
\[
(1+\alpha)^{n-1}\limsup_{\rho \to 0}\frac{1}{\rho^{n-1}}f_\rho'(x,\nu)\leq f'(x,\nu)+c_2C_\uu((1+\alpha)^{n-1}-1))\,.
\] 
We finally conclude by the arbitrariness of $\alpha>0$. 
\end{proof}
We now state the main result of this section: 
\begin{proposition}\label{prop:f'-f''}
	Let $(f_k) \subset \mathcal F$; then the functions $f'$ and $f''$ defined, respectively, as in \eqref{f'} and \eqref{f''} are Borel measurable and satisfy the following: for every $x\in \R^n$ and every $\nu \in \Sph^{n-1}$ it holds
		\begin{equation}\label{c:bdds-f'-f''}
			c_1 c_p \leq f'(x, \nu) \leq c_2 c_p, \quad c_1 c_p \leq f''(x,\nu) \leq c_2 c_p, 
		\end{equation}
		with $$c_p:=p(p-1)^{\frac{1-p}{p}}\int_0^1W(t)^{\frac{p-1}{p}}\dt\,.$$
\end{proposition}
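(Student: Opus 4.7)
For measurability, I would combine Lemma~\ref{lemma:f'-f''}\,\ref{i} with the monotonicity of \ref{ii}: the former gives that $f'_\rho$ and $f''_\rho$ are upper semicontinuous, hence Borel, on each of $\R^n\x\widehat\Sph^{n-1}_\pm$ (and therefore on $\R^n\x\Sph^{n-1}$ since these two sets cover $\Sph^{n-1}$). The monotonicity of $\rho\mapsto f'_\rho(x,\nu)-c_2 C_\uu\rho^{n-1}$ ensures that, for each $(x,\nu)$, this map has at most countably many discontinuities in $\rho$, so the $\limsup_{\rho\to 0}$ appearing in Lemma~\ref{lemma:f'-f''}\,\ref{iii} can be realised along a fixed countable sequence $\rho_j\searrow 0$; this presents $f'$ (and analogously $f''$) as a countable $\limsup$ of Borel functions, hence Borel.

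For the lower bound, I would start from $\F_k\ge c_1\mathcal M_k$ (consequence of \ref{hyp:lb-f}), so that $\m_k(\bar u^\nu_{x,\e_k},Q^\nu_\rho(x))\ge c_1\inf\{\mathcal M_k(u,Q^\nu_\rho(x)):u\in\Adm(\bar u^\nu_{x,\e_k},Q^\nu_\rho(x))\}$. Taking near-optimal $u_k$, extending them by $\bar u^\nu_{x,\e_k}$ to a slightly enlarged cube $Q^\nu_{(1+\eta)\rho}(x)$ (the additional MM energy being controlled by \eqref{1dim-energy-bis}), and using the compactness in Remark~\ref{initial-rem}\,\ref{compactness}, I would pass to a subsequential $L^1$-limit $u\in BV(Q^\nu_{(1+\eta)\rho}(x);\{0,1\})$ that coincides with $u^\nu_x$ on the outer shell (because $\bar u^\nu_{x,\e_k}\to u^\nu_x$ in $L^1_\loc$). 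A slicing in direction $\nu$ then forces $u$ to jump on a.e.\ line through $Q^\nu_{(1+\eta)\rho}(x)$, yielding $\H^{n-1}(S_u\cap Q^\nu_\rho(x))\ge\rho^{n-1}$ after subtracting the contribution of the planar jump $\Pi^\nu(x)$ on the shell. Combined with the Modica--Mortola $\Gamma$-liminf inequality \cite{Mo,MoMo}, this gives $\liminf_k\m_k\ge c_1 c_p\rho^{n-1}$; dividing by $\rho^{n-1}$ and taking $\limsup_\rho$ yields $f'(x,\nu)\ge c_1 c_p$, and since $f'\le f''$ the same lower bound holds for $f''$.

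For the upper bound I would exploit $\F_k\le c_2\mathcal M_k$ (from \ref{hyp:ub-f}) and build a near-optimal MM competitor. Fix $\eta>0$ and choose a profile $v_\eta\in C^1(\R;[0,1])$ with $v_\eta=0$ on $(-\infty,-T]$, $v_\eta=1$ on $[T,+\infty)$ and $\int_\R(W(v_\eta)+|v_\eta'|^p)\dt\le c_p+\eta$. For $N\in\N$ and $\rho$ small set $\delta\defas\rho/N$ and $z_k(y)\defas v_\eta((y-x)\cdot\nu/\e_k)$, then glue $z_k$ with $\bar u^\nu_{x,\e_k}$ via Proposition~\ref{prop:fund-est} applied with $A\defas Q^\nu_{\rho-2\delta}(x)$, $A'\defas Q^\nu_{\rho-\delta}(x)$ and $B\defas Q^\nu_\rho(x)\setminus\overline Q^\nu_{\rho-2\delta}(x)$. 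The resulting $w_k\in\Adm(\bar u^\nu_{x,\e_k},Q^\nu_\rho(x))$ satisfies
\[
\F_k(w_k,Q^\nu_\rho(x))\le(1+K\e_k)\big(\F_k(z_k,A')+\F_k(\bar u^\nu_{x,\e_k},B)\big)+\omega_k(z_k,\bar u^\nu_{x,\e_k},A,A',B).
\]
Since both $z_k$ and $\bar u^\nu_{x,\e_k}$ equal $u^\nu_x\in\{0,1\}$ outside the slab $\{|(y-x)\cdot\nu|\le\max(T,1)\e_k\}$, Proposition~\ref{prop:fund-est}\,\ref{fund-ii} yields $\omega_k\le C\e_k\delta\rho^{n-2}$, while direct one-dimensional computations give $\mathcal M_k(z_k,A')\le(c_p+\eta)\rho^{n-1}$ and $\mathcal M_k(\bar u^\nu_{x,\e_k},B)\le C\delta\rho^{n-2}$. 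Dividing by $\rho^{n-1}$, passing to $\limsup_k$ and then $\limsup_\rho$ (keeping $\delta/\rho=1/N$), and finally letting $N\to+\infty$ and $\eta\to 0$ produces $f''(x,\nu)\le c_2 c_p$; since $f'\le f''$ this concludes.

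The hard part will be the upper-bound construction: the competitor has to simultaneously realise the optimal 1D transition cost $c_p$ and match the regularised jump $\bar u^\nu_{x,\e_k}$ on a neighbourhood of the whole $\partial Q^\nu_\rho(x)$, including the lateral faces where $\bar u^\nu_{x,\e_k}$ is not constant. The fundamental estimate is the right tool because both the rescaled 1D profile $v_\eta(\cdot/\e_k)$ and $\bar u^\nu_{x,\e_k}$ coincide with the jump function $u^\nu_x$ outside an $O(\e_k)$-thick slab around the interface plane, so the ``bad'' part of the gluing shell has measure $O(\e_k\delta\rho^{n-2})$ and the remainder $\omega_k$ stays compatible with the correct scaling.
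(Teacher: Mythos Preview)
Your proposal is correct and, for measurability and the upper bound, follows essentially the same route as the paper: the paper uses Lemma~\ref{lemma:f'-f''}\,\ref{i}--\ref{iii} together with monotonicity in $\rho$ to reduce the $\limsup$ to a countable dense set, and it establishes the upper bound by taking a Modica--Mortola recovery sequence and adjusting its boundary values via Proposition~\ref{prop:fund-est}---which is exactly your gluing of a near-optimal 1D profile with $\bar u^\nu_{x,\e_k}$, spelled out more explicitly. One minor remark: in your measurability sketch, it is not the ``countably many discontinuities'' per $(x,\nu)$ that allows a fixed countable sequence (those discontinuity sets depend on $(x,\nu)$), but rather the monotonicity of $\rho\mapsto f'_\rho-c_2 C_\uu\rho^{n-1}$, which forces the left and right limits of $f'_\rho$ to sandwich $f'_\rho$ at every $\rho$ and hence makes the $\limsup$ in Lemma~\ref{lemma:f'-f''}\,\ref{iii} equal to the $\limsup$ along \emph{any} countable dense set in $(0,+\infty)$.

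The one genuine difference is the lower bound. You pass through compactness, the Modica--Mortola $\Gamma$-$\liminf$ inequality, and a slicing argument on the limit $u$. The paper instead exploits that the minimisation problem for $\mathcal M_k$ on $Q^\nu_\rho(x)$ is translation and rotation invariant, reduces to $x=0$, $\nu=e_n$, and shows directly that $\mathcal M_k(u,Q_\rho(0))\ge c_p\rho^{n-1}$ for every $u\in\Adm(\bar u^{e_n}_{0,\e_k},Q_\rho(0))$ by Fubini in the $e_n$-direction followed by Young's inequality on each 1D fibre (using that $u(x',-\tfrac{\rho}{2})=0$ and $u(x',\tfrac{\rho}{2})=1$ for a.e.\ $x'$). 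This yields $\liminf_k \m_{k,\rho}(x,\nu)\ge c_p\rho^{n-1}$ without extracting subsequences, without extending to a larger cube, and without invoking $\Gamma$-convergence. Your route works, but the paper's is more elementary and gives the bound at fixed $k$ rather than only in the limit.
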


\begin{proof}
We prove the statement only for $f'$, 
as the proof for $f''$ can be achieved in a similar way. 
	
		\step 1 we show that $f'$ is Borel measurable. 
	Let $\rho>0$ and let $f'_\rho$ be the function defined in \eqref{c:f'-rho}.  
	By Lemma \ref{lemma:f'-f''} \ref{ii} we have that the function $\rho \to f'_{\rho}(x,\nu)-c_2C_\uu\rho^{n-1}$ is nonincreasing on $(0,+\infty)$, hence in particular 
	\[
	\lim_{\rho'\to \rho^-}f'_{\rho'}(x,\nu)\geq f'_{\rho}(x,\nu)\geq \lim_{\rho'\to \rho^+} f'_{\rho'}(x,\nu),
	\]
	for every $x\in \R^n$, $\nu \in \Sph^{n-1}$, and every $\rho>0$. This together with Lemma \ref{lemma:f'-f''} \ref{iii} imply that
	\[f'(x,\nu)=
	\limsup_{\rho\to 0}\frac{1}{\rho^{n-1}}f'_{\rho}(x,\nu)= \limsup_{\substack{\rho\to 0\\\rho \in D}}\frac{1}{\rho^{n-1}}f'_{\rho}(x,\nu)
	\]
where $D$ is any countable dense subset of $(0,+\infty)$.  Since by Lemma \ref{lemma:f'-f''} \ref{i} the function
	$(x,\nu) \mapsto f'_\rho(x,\nu)$ is Borel measurable for every $\rho>0$, and the limit on a countable set of Borel functions is Borel we conclude.
	

	\step 2 we show that  $f'$ is bounded. For every $x\in\R^n$, $\nu\in\S ^{n-1}$, and $\rho>0$ we define the minimisation problem
	\begin{equation*}
		\m_{k,\rho}(x,\nu) \defas \min\bigg\{\mathcal M_k(u,Q^\nu_\rho(x))
		\colon u\in \Adm(\bar u_{x,\e_k}^{\nu},Q^{\nu}_\rho(x))
		\bigg\},
	\end{equation*}
	where $\mathcal M_k$ is defined as in \eqref{MM}.
From Remark \ref{initial-rem} \ref{rem:bounds-Fe} we have 
	\begin{equation*}
	    c_1 	\m_{k,\rho}(x,\nu)\le \m_{k}(\bar u_{x,\e_k}^\nu,Q^\nu_\rho(x))\le 
	    c_2	\m_{k,\rho}(x,\nu).
	\end{equation*}
	Therefore to prove that $f'$ satisfies the bounds in \eqref{c:bdds-f'-f''} it is enough to show that
	\begin{equation}\label{c:claim}
		\lim_{k \to +\infty} \m_{k,\rho}(x,\nu)=c_p\rho^{n-1}.
	\end{equation}
By the homogeneity and rotation invariance of the Modica-Mortola functional there holds 
	\[
	\m_{k,\rho}(x,\nu) = \m_{k,\rho}(0,e_n).
	\] 
	Let $\eta>0$ be arbitrary fixed. By the $\Gamma$-convergence result in \cite{MoMo} we can find a sequence $(u_k)\subset W^{1,p}(Q_\rho(0))$, $0\le u_k\le 1$ such that $u_k$ converges to $u_0^{e_n}$ in $L^p(Q_\rho(0))$ and 
		\begin{equation*}\label{c:cp-recovery}
		\limsup_{k\to+\infty}\mathcal M_k(u_k,Q_\rho(0))=(c_p+\eta)\rho^{n-1}\,.
	\end{equation*}
	Up to suitably modifying $u_k$ (for example by using Proposition \ref{prop:fund-est}), we can assume that $u_k=u_0^{e_n}$ in $\{|y_n|>\e_k \}$ so that in particular $(u_k)\subset \Adm(\bar u_{0,\e_k}^{e_n},Q_\rho(0))$. 
	In particular, by the arbitrariness of $\eta>0$ we deduce that
	\begin{equation}\label{c:cp-one}
	\limsup_{k\to +\infty}\m_{k,\rho}(x,\nu)= 		\limsup_{k\to +\infty}\m_{k,\rho}(0,e_n) \leq c_p \rho^{n-1}\,.
	\end{equation}
	On the other hand by Fubini's Theorem we have
	\begin{equation}\label{c:jg}
		\begin{split}
		\mathcal	M_k(u,Q_\rho(0))
			&= \int_{Q'_{\rho}}\int_{-\frac{\rho}{2}}^{\frac{\rho}{2}} \bigg(\frac{W(u(x',x_n))}{\e_k}+\e_k^{p-1}|\nabla u (x',x_n)|^p\bigg)\dx_n \dx'\\
			&\geq \int_{Q'_{\rho}}\int_{-\frac{\rho}{2}}^{\frac{\rho}{2}} \bigg(\frac{W(u(x',x_n))}{\e_k}+\e_k^{p-1}\Big|\frac {\partial u(x',x_n)}{\partial x_n}\Big|^p\bigg)\dx_n \dx'\,,
		\end{split}
	\end{equation}
and by Young's inequality 
\begin{equation}\label{young}
\begin{split}
    \int_{-\frac{\rho}{2}}^{\frac{\rho}{2}}\Bigg(
	\frac{W(u(x',x_n))}{\e_k}&+\e_k^{p-1}\bigg|\frac {\partial u (x',x_n)}{\partial x_n}\bigg|^p\Bigg)\dx_n\\
	\ge
	&\Big(\frac{p}{p-1}\Big)^{\frac{p-1}{p}}p^{\frac{1}{p}}
	\int_{-\frac{\rho}{2}}^{\frac{\rho}{2}}\Bigg((W(u(x',x_n)))^{\frac{p-1}p}\bigg|\frac {\partial u (x',x_n)}{\partial x_n}\bigg|
	\Bigg)\dx_n\,,
\end{split}
\end{equation}
	for $\L^{n-1}$-a.e.\ $x'\in Q'_\rho$.
	Now if $u\in \Adm((\bar u_{0,\e_k}^{e_n},Q_\rho(0))$, then it coincides with $u_0^{e_n}$ in a neighbourhood of 
	\[
	\partial^\pm Q_\rho(0)\defas \Big\{(x',x_n)\in \R^{n-1}\times \R \colon x'\in \overline{Q'_\rho}, \; x_n=\pm\frac{\rho}{2}\Big\}\,.
	\]
	Therefore, for $\L^{n-1}$-a.e.\ $x'\in Q'_{\rho}$, the function $u_{x'}(t)\defas u(x',t)$ belongs to $W^{1,p}(-\frac{\rho}{2},\frac{\rho}{2})$ and satisfies 
	%
	%
	$u_{x'}(-\frac{\rho}{2})=0$, and $u_{x'}(\frac{\rho}{2})=1$,
 and a change of variable in \eqref{young} yields	\begin{align}\label{c:j-g}
		\int_{-\frac{\rho}{2}}^{\frac{\rho}{2}}\bigg(\frac{W(u(x',x_n))}{\e_k}+\e_k^{p-1}\bigg|\frac {\partial u (x',x_n)}{\partial x_n}\bigg|^p\bigg)\dx_n
		\geq \Big(\frac{p}{p-1}\Big)^{\frac{p-1}{p}}p^{\frac{1}{p}}\int_0^1 W(t)^{\frac{p-1}p}\dt=c_p\,.
	\end{align}
  Thus, combining \eqref{c:jg} with \eqref{c:j-g} we obtain
	\[
	\mathcal M_k(u,Q_\rho(0))\geq c_p \rho^{n-1},
	\]
	for every $u\in \Adm((\bar u_{0,\e_k}^{e_n},Q_\rho(0))$. Passing to the infimum on $u$ and to the liminf as $k\to +\infty$ we get
	\begin{equation}\label{c:cp-two}
		\liminf_{k\to +\infty}\m_{k,\rho}(x,\nu)=
		\liminf_{k\to +\infty}\m_{k,\rho}(0,e_n) \geq c_p \rho^{n-1},
	\end{equation}
	for every $\rho>0$. Eventually, gathering \eqref{c:cp-one} and \eqref{c:cp-two} we get \eqref{c:claim}, and hence the thesis. 
\end{proof}
We now have all the ingredients to prove the main result of this paper, namely, Theorem \ref{thm:main-result}. 

\begin{proof}[Proof of Theorem \ref{thm:main-result}] The proof follows by combining Theorem \ref{thm:int-rep},  Proposition \ref{p:surface-term} and Proposition \ref{prop:f'-f''}.
\end{proof}

%
%
\section{Stochastic homogenisation}\label{sect:stochastic-homogenisation}
\noindent
In this section we derive a $\Gamma$-convergence result for functionals of type $\F_k$ when $f_k$ are random integrands of the form
\[
 f_k(\om, x,u,\xi)=f\Big(\omega,\frac{x}{\e_k},u,\xi\Big),
\]
where $\om$ belongs to the sample space $\Omega$ of a complete  probability space $(\Omega,\T,P)$. 

\medskip

In order to do that we need to recall some useful definitions.

\begin{definition}[Group of $P$-preserving transformations] Let $d\in \N$, $d\geq 1$. A group of $P$-preserving transformations on $(\Omega,\T,P)$  is a family $(\tau_z)_{z\in\Z^d}$ of mappings $\tau_z\colon\Omega\to\Omega$ satisfying the following properties:
	\begin{enumerate}[label=(\arabic*)]
		\item (measurability) $\tau_z$ is $\T$-measurable for every $z\in\Z^d$;
		\item (invariance) $P(\tau_z(E))=P(E)$, for every $E\in\T$ and every $z\in\Z^d$;
		\item (group property) $\tau_0={\rm id}_\Omega$ and $\tau_{z+z'}=\tau_z\circ\tau_{z'}$ for every $z,z'\in\Z^d$.
	\end{enumerate}
	If, in addition, every $(\tau_z)_{z\in\Z^d}$-invariant set (\textit{i.e.}, every $E\in\T$ with  
	$\tau_z(E)=E$ for every $z\in\Z^d$) has probability 0 or 1, then $(\tau_z)_{z\in\Z^d}$ is called ergodic.
\end{definition}
Let $a\defas(a_1,\dots,a_d),\,b\defas(b_1,\dots,b_d)\in\Z^d$ with $a_i<b_i$ for all $i\in\{1,\ldots,d\}$; we define the $d$-dimensional interval 
\begin{equation*}
[a,b):=\{x\in\Z^d\colon a_i\le x_i<b_i\,\,\text{for}\,\,i=1,\ldots,d\}
\end{equation*}
and we set
\begin{equation*}
\I_d:=\{[a,b)\colon a,b\in\Z^d\,,\, a_i<b_i\,\,\text{for}\,\,i=1,\ldots,d\}\,.
\end{equation*}
\begin{definition}[Subadditive process]\label{sub_proc}
	A discrete subadditive process with respect to a group $(\tau_z)_{z\in\Z^d}$ of $P$-preserving transformations on $(\Omega,\T,P)$ is a function $\mu\colon\Omega \x \I_d\to\R$ satisfying the following properties:
	\begin{enumerate}[label=(\arabic*)]
		\item\label{subad:meas} (measurability) for every $A\in\I_d$ the function $\omega\mapsto\mu(\omega,A)$ is $\T$-measurable;
		\item\label{subad:cov} (covariance) for every $\omega\in\Omega$, $A\in\I_d$, and $z\in\Z^d$ we have $\mu(\omega,A+z)=\mu(\tau_z(\omega),A)$;
		\item\label{subad:sub} (subadditivity) for every $A\in\I_d$ and for every finite family $(A_i)_{i\in I}\subset\I_d$ of pairwise disjoint sets such that $A=\cup_{i\in I}A_i$, we have
		\begin{equation*}
		\mu(\omega,A)\le\sum_{i\in I}\mu(\omega,A_i)\quad\text{for every}\,\,\omega\in\Omega\,;
		\end{equation*}
		\item\label{subad:bound} (boundedness) there exists $c>0$ such that $0\le\mu(\omega,A)\le c\L^d(A)$ for every $\omega\in\Omega$ and $A\in\I_d$.
	\end{enumerate}
\end{definition}

\begin{definition}[Stationarity] \label{def:stationarity}
Let $(\tau_z)_{z\in\Z^n}$ be a group of $P$-preserving transformations on $(\Omega,\T,P)$. 
We  say that $f\colon\Omega\x\R^n\x\R \x\R^n\to[0,+\infty)$ is stationary with respect to $(\tau_z)_{z\in\Z^n}$ if
	\begin{equation*}
f(\omega,x+z,u,\xi)=f(\tau_z(\omega),x,u,\xi)
	\end{equation*}
	for every $\omega\in\Omega$, $x\in\R^n$, $z\in\Z^n$, $u\in[0,1]$ and $\xi\in\R^n$. 	
\end{definition}

We consider random integrands $f\colon\Omega\x\R^n\x\R\x\R^n\to[0,+\infty)$ satisfying the following assumptions:

\smallskip

\begin{enumerate}[label= ($F\arabic*$)]
	\item\label{meas_f} $f$ is $(\T\otimes\mathcal{B}^n\otimes\mathcal{B}\otimes\mathcal{B}^n)$-measurable;
	\item\label{ginF} $f(\omega,\,\cdot\,,\,\cdot\,,\,\cdot\,)\in\mathcal F$ for every $\omega\in\Omega$;
	\item\label{cont_in_xi} 
	For every $\omega\in\Omega$, and every $x\in\R^n$ the map $(u,\xi)\mapsto f(\omega,x,u,\xi)$ is lower semicontinuous.
\end{enumerate}

\medskip

\noindent 
For any $f$ as above we define the sequence of random phase-field functionals $\F_k(\om) \colon  L^1_\loc(\R^n) \times \A \longrightarrow [0,+\infty]$ given by
\begin{equation}\label{F_e_omega}
\F_k(\om)(u, A)\defas \frac{1}{\e_k}\int_A f\left(\omega,\frac{x}{\e_k},u,\e_k\nabla u\right)\dx\,,
\end{equation}
if $u\in  W^{1,p}(A)$, $0\leq u\leq 1$ and extended to $+\infty$ otherwise.  
For $\omega\in\Omega$, $A\in\A$  we define
\begin{equation*}\label{F-omega}
	\F(\omega)(u,A)\defas\int_Af(\omega,x,u,\nabla u)\dx\,,
\end{equation*}
and
\begin{equation}\label{m-omega}
	\m_\omega(\bar u^\nu_z,A):=\inf\Bigl\{	\F(\omega)(u,A)\colon u\in \Adm(\bar u^\nu_z,A)\Bigr\}\,,
\end{equation}
where $\Adm(\bar u^\nu_z,A)$ is as in \eqref{c:adm-e}, with $\bar u^\nu_{x,\e_k}$ replaced by $\bar u^\nu_z$\ie with $\e_k=1$.
Moreover for every $A\subset\R^n$ with $\rm int\, A\in\A$ we set $$\m_{\om}(\bar{u}_x^\nu,A)\defas\m_{\om}(\bar{u}_x^\nu,\rm int\, A),\quad \text{and}\quad \Adm(\bar{u}_0^\nu,A)\defas \Adm(\bar{u}_0^\nu,\rm int\, A).$$

\medskip

Finally we are able to state the main result of this section.

\begin{theorem}[Stochastic homogenisation]\label{thm:stoch_hom_2}
Let $f$  be a random integrand satisfying \ref{meas_f}-\ref{cont_in_xi}. Assume also that $f$ is stationary with respect to a group $(\tau_z)_{z\in\Z^n}$ of $P$-preserving transformations on $(\O,\mathcal T,P)$.
For every $\omega\in\Omega$ let $\F_k(\omega)$ be as in \eqref{F_e_omega} and $\m_\omega$ be as in~\eqref{m-omega}.  Then there exists $\Omega'\in\T$, with $P(\Omega')=1$ such that for every $\omega\in\Omega'$, $x\in\R^n$, $\nu\in \Sph^{n-1}$ the limit
	\begin{equation}\label{eq:f-stoch-hom}
	\lim_{r\to +\infty} \frac{\m_\omega(\bar u^\nu_{rx},Q^\nu_r(rx))}{r^{n-1}}=
	\lim_{r\to +\infty} \frac{\m_\omega(\bar u^\nu_{0},Q^\nu_r(0))}{r^{n-1}}=:f_\hom(\om,\nu)
	\end{equation}
exists and is independent of $x$. The function  $f_\hom \colon \Omega \times \Sph^{n-1} \to [0,+\infty)$ is $(\T\otimes \mathcal B(\Sph^{n-1}))$-measurable.

Moreover, for every $\omega\in\Omega'$ and for every $A\in \A$ the functionals $\F_k(\omega)(\cdot,A)$ $\Gamma$-converge in $ L^1_\loc(\R^n)$ to the functional $\F_{\hom}(\omega)(\cdot,A)$ with $\F_{\hom}(\omega)\colon  L^1_\loc(\R^n)\times\A\longrightarrow[0,+\infty]$ given by
\begin{equation*}
\F_{\rm hom}(\omega)(u,A)\defas
\begin{cases}
\displaystyle\int_{S_u \cap A} f_{\rm hom}(\omega,\nu_u)\dHn &\text{if}\ u \in BV(A;\{0,1\})\, \\[4pt]
+\infty &\text{otherwise}\,.
\end{cases}
\end{equation*}
If, in addition, $(\tau_z)_{z\in\Z^n}$ is ergodic, then $f_{\hom}$ is independent of $\omega$ and 
		\begin{align*}
f_{\rm hom}(\nu) &=
	\lim_{r\to +\infty} \frac{1}{r^{n-1}}\int_\Omega \m_\omega(\bar u^\nu_{0},Q^\nu_r(0))\dP(\omega)\,,
	\end{align*}
	thus, $\F_\hom$ is deterministic.
\end{theorem}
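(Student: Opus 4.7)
The plan is to derive the stochastic result from the deterministic homogenisation Theorem \ref{thm:hom}, which reduces the task to establishing, $P$-almost surely, the existence of the limit \eqref{eq:f-stoch-hom}, its independence of $x$, and the measurability of $f_\hom$. The main tool is the subadditive ergodic theorem of Akçoglu--Krengel applied to a suitable process built from $\m_\omega$.

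First I would fix a countable dense set $D \subset \S^{n-1}$ (for instance $\widehat{\S}^{n-1}_\pm \cap \Q^n$, so that the rotations $R_\nu$ are rational) and, for each $\nu \in D$, introduce the $(n-1)$-dimensional process
\[
\mu_\nu(\omega, A) \defas \m_\omega\bigl(\bar u_0^\nu,\, R_\nu(A \times (-c,c))\bigr), \qquad A \in \I_{n-1},
\]
for a fixed $c>0$ large enough that the regularised boundary datum fits inside $R_\nu(A\times(-c,c))$. The four properties of Definition \ref{sub_proc} are then verified as follows: measurability in $\omega$ follows from \ref{meas_f} together with \ref{cont_in_xi}, which allow the infimum in \eqref{m-omega} to be restricted to a countable dense family of Sobolev competitors; subadditivity is obtained by pasting almost-optimal competitors on disjoint subcubes through the fundamental estimate Proposition \ref{prop:fund-est}, exploiting that $\bar u_0^\nu$ is a common boundary datum across the shared faces; boundedness $0\le \mu_\nu(\omega,A) \le c\,\L^{n-1}(A)$ is immediate from \ref{hyp:lb-f}--\ref{hyp:ub-f} and the one-dimensional upper bound \eqref{1dim-energy-bis}.

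The covariance step is the delicate one. Since $\Z^n$ is not in general preserved by $R_\nu$, integer translations of $A$ in $\I_{n-1}$ do not correspond to integer translations in $\Z^n$ of the rotated cube, and one cannot directly invoke stationarity of $f$ under $(\tau_z)_{z\in\Z^n}$. Following \cite{ACR11, CDMSZ19a}, I would introduce a discrete auxiliary process in which $A+z'$ is translated by $\lfloor R_\nu z'\rceil \in \Z^n$, apply stationarity, and control the error caused by the small rotational mismatch by relaxing the boundary datum on a thin corona using Proposition \ref{prop:fund-est} together with \eqref{1dim-energy-bis}; the regularisation $\bar u_{x,\e}^\nu$ makes this patching consistent as $r\to+\infty$. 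Once covariance is established, the Akçoglu--Krengel theorem yields the existence of $\lim_{r} \mu_\nu(\omega, rQ')/r^{n-1}$ $P$-a.s.\ for $\nu \in D$, and stationarity forces the limit to be $x$-independent; a standard Fekete/monotonicity argument upgrades the limit along integer $r$ to the continuous limit in \eqref{eq:f-stoch-hom}. In the ergodic case the limit is deterministic and equal to the expectation of $\mu_\nu$, giving the claimed representation of $f_\hom(\nu)$.

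Next I would extend the limit from $D$ to the whole sphere using the upper semicontinuity of $\nu \mapsto f'_\rho(\cdot,\nu),\,f''_\rho(\cdot,\nu)$ on $\widehat{\S}^{n-1}_\pm$ proved in Lemma \ref{lemma:f'-f''}\ref{i}, applied $\omega$-wise: approximating $\nu \in \S^{n-1}$ by a sequence $(\nu_j)\subset D$ and exploiting that $f'$ and $f''$ (with respect to the oscillating sequence $f_k(\omega,\cdot,\cdot,\cdot)$) agree with $f_\hom(\omega,\cdot)$ on $D$ yields a single $P$-null exceptional set outside of which the limit exists for every $\nu$ and every $x$. Joint measurability of $f_\hom$ in $(\omega,\nu)$ follows because it is, on a set of full measure, the pointwise limit along rational $r$ of the jointly measurable maps $(\omega,\nu)\mapsto\m_\omega(\bar u_0^\nu,Q_r^\nu(0))/r^{n-1}$ (note that $\nu \mapsto R_\nu$ is Borel). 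The $\Gamma$-convergence conclusion then follows by applying Theorem \ref{thm:hom} to each $\omega\in\Omega'$. The hard part is the covariance of $\mu_\nu$: one must reconcile the integer-lattice group action $(\tau_z)_{z\in\Z^n}$ with the rotated reference frame attached to $\nu$, and control the error induced by snapping to the closest integer translation — everything else amounts to patching via the fundamental estimate.
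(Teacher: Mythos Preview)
Your overall strategy is the paper's strategy: reduce to Theorem~\ref{thm:hom} via the almost-sure existence and $x$-independence of the limit \eqref{eq:f-stoch-hom}, obtained from the subadditive ergodic theorem applied for rational directions and then extended by continuity in $\nu$. The supporting steps (measurability, subadditivity, boundedness, passage from integer to real $r$, ergodic case) are all essentially as in the paper, even if the paper handles them with slightly different tools (Moreau--Yosida regularisation for measurability; direct pasting rather than the fundamental estimate for subadditivity; dedicated comparison lemmas in the Appendix rather than Lemma~\ref{lemma:f'-f''}, which concerns the $\e_k$-scale quantities $\m_k^\delta$ and not $\m_\omega$).

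The one point where you diverge from the paper is the covariance step, and here you make things harder than they are. You correctly choose $\nu\in\S^{n-1}\cap\Q^n$ so that $R_\nu\in\Q^{n\times n}$, but then say that $\Z^n$ is not preserved by $R_\nu$ and propose to round $R_\nu z'$ to the nearest integer and control the error. This is unnecessary: since $R_\nu$ is rational, there is an integer $M_\nu$ with $M_\nu R_\nu\in\Z^{n\times n}$, and the paper defines the process on the \emph{scaled} sets $I_\nu:=M_\nu R_\nu(I\times[-c,c))$ (see \eqref{def:interval-n-dim}--\eqref{def:subad-process-g}). Then a translation $I\mapsto I+z'$ in $\I_{n-1}$ corresponds to translation by $z'_\nu:=M_\nu R_\nu(z',0)\in\Z^n\cap\Pi^\nu$ in physical space, and stationarity of $f$ under $(\tau_z)_{z\in\Z^n}$ gives \emph{exact} covariance $\mu_\nu(\omega,I+z')=\mu_\nu(\tau_{z'_\nu}(\omega),I)$; the induced family $(\tau^\nu_{z'})_{z'\in\Z^{n-1}}:=(\tau_{z'_\nu})_{z'\in\Z^{n-1}}$ is a genuine group of $P$-preserving transformations. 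No rounding, no error term, and Definition~\ref{sub_proc} is satisfied on the nose, so Akcoglu--Krengel applies directly. Your rounding scheme, by contrast, would not produce a process satisfying Definition~\ref{sub_proc}\ref{subad:cov} exactly, and you would have to work around the hypotheses of the ergodic theorem rather than through them.
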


\begin{remark} The notion of stationarity  given in Definition \ref{def:stationarity} generalises the notion of spatial periodicity.
Therefore  Theorem \ref{thm:stoch_hom_2} applies also to the case of deterministic periodic homogenisation. However we will show later (see Section \ref{sect:periodic-hom}) that condition \ref{cont_in_xi} is not needed if we restrict to the periodic setting.
\end{remark}
The rest of the section is dedicated to prove Theorem \ref{thm:stoch_hom_2}.

\subsection{Existence of the limit}\label{sect:cell-formulas}
The almost sure $\Gamma$-convergence result in Theorem~\ref{thm:stoch_hom_2} readily follows by Theorem~\ref{thm:hom} if there exists a $\T$-measurable set $\Omega' \subset \Omega$, with $P(\Omega')=1$, such that for every $\om \in \Omega'$ the limit in~\eqref{eq:f-stoch-hom} exists and is independent of $x$, and if $f_\hom$ is $(\T\otimes \mathcal B(\Sph^{n-1}))$-measurable. We will prove that this is actually the case when the integrand $f$ satisfies \ref{meas_f}-\ref{cont_in_xi} and is stationary in the sense of Definition \ref{def:stationarity}.

\medskip

Since the boundary conditions given in the definition of $\m_\om(\bar u_{rx}^\nu,Q_r^\nu(rx))$ depend on $x$, the proof of the   
existence and $x$-homogeneity of the limit in~\eqref{eq:f-stoch-hom} does not follow by a direct application of the Subadditive Ergodic Theorem ~\cite[Theorem 2.4]{AK81}. For this reason we need to follow a more technical argument, in the same way as in \cite{ACR11, CDMSZ19a, BCR18},  which can be divided into three main steps. In the first step we prove that when $x=0$ the minimisation problem \eqref{m-omega} defines a subadditive process on $\O \times \mathcal I_{n-1}$ (see Proposition \ref{prop:subadditive}). 
In the second step we prove the almost sure existence of  the limit in \eqref{eq:f-stoch-hom} when $x=0$ (see Proposition \ref{prop:ex-limit-zero}). Eventually in the third one we show that the same holds for an arbitrary $x\in\R^n$ (see Proposition \ref{prop:ex-limit-x}).\\

For $\nu\in \Sph^{n-1}$ we let $R_\nu$ be an orthogonal matrix as in~\ref{Rn}. 
Note that $\{R_\nu e_i\colon i=1\,,\ldots\,,n-1\}$ is an orthonormal basis for $\Pi^\nu$, and $R_\nu\in\Q^{n\x n}$, if $\nu\in\Sph^{n-1}\cap\Q^n$. Let $M_\nu>2$ be an integer such that $M_\nu R_\nu\in\Z^{n\x n}$; therefore $M_\nu R_\nu (z',0)\in\Pi^\nu\cap\Z^n$ for every $z'\in\Z^{n-1}$. 

Let $I\in\mathcal{I}_{n-1}$\ie $I=[a,b)$ with $a,b\in\Z^{n-1}$. For every $\nu\in\S^{n-1}$ and every $I\in\mathcal{I}_{n-1}$ we define the $n$-dimensional interval $I_\nu$ as
\begin{equation}\label{def:interval-n-dim}
I_\nu\defas M_\nu R_\nu\big(I\x[-c,c)\big)\quad\text{where}\quad c\defas\frac{1}{2}\max_{i=1,\ldots,n-1}(b_i-a_i)\,.
\end{equation}
Eventually, for fixed $\nu \in \Sph^{n-1}\cap \Q^n$ we consider the function $\mu_\nu \colon \O \times \mathcal I_{n-1} \to \R$ given by
\begin{equation}\label{def:subad-process-g}
\mu_\nu(\omega,I)\defas\frac{1}{M_\nu^{n-1}}\m_\om(\bar{u}_{0}^\nu, I_\nu)\,,
\end{equation} 
where $\m_\om (\bar{u}_{0}^\nu, I_\nu)$ is as in \eqref{m-omega} with $x=0$ and $A=I_\nu$. 
Then the following result holds true.


\begin{proposition}\label{prop:subadditive}
Let $f$ satisfy \ref{meas_f}-\ref{cont_in_xi} and assume that it is stationary with respect to a group $(\tau_z)_{z\in\Z^n}$ of $P$-preserving transformations on $(\Omega,\T,P)$. Let $\nu\in\Sph^{n-1}\cap \Q^n$ and let $\mu_\nu\colon \O \times \mathcal I_{n-1} \mapsto \R$ be as in \eqref{def:subad-process-g}.
Then there exists a group of $P$-preserving transformations $(\tau_{z'}^\nu)_{z'\in\Z^{n-1}}$ on $(\Omega,\T,P)$ such that $\mu_\nu$ is a subadditive process on $(\Omega,\T,P)$ with respect to $(\tau_{z'}^\nu)_{z'\in\Z^{n-1}}$. Moreover, it holds
\begin{equation}\label{c:bd-mu}
0\le\mu_\nu(\omega, I)\le c_2 C_\uu \L^{n-1}(I),
\end{equation}
for $P$-a.e.\ $\om \in \Omega$ and for every $I\in \mathcal I_{n-1}$.  
\end{proposition}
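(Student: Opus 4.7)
My plan is to take
\[
\tau_{z'}^\nu \defas \tau_{M_\nu R_\nu(z',0)}, \qquad z'\in\Z^{n-1}.
\]
This is well-defined because $M_\nu R_\nu\in\Z^{n\x n}$ forces $M_\nu R_\nu(z',0)\in\Z^n$; the group law $\tau^\nu_{z'+w'}=\tau^\nu_{z'}\circ\tau^\nu_{w'}$ and $P$-invariance are inherited from $(\tau_z)_{z\in\Z^n}$ by linearity of $z'\mapsto M_\nu R_\nu(z',0)$. With this choice I need to verify the four items of Definition \ref{sub_proc} and the bound \eqref{c:bd-mu}.

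The upper estimate in \eqref{c:bd-mu} (which gives \ref{subad:bound}) comes from testing with $\bar u^\nu_0\in\Adm(\bar u^\nu_0,I_\nu)$: the substitution $y=M_\nu R_\nu(z',t)$ together with \ref{hyp:ub-f} and the fact that $\uu$ is constant outside $(-1,1)$ yield $\F(\omega)(\bar u^\nu_0,I_\nu)=c_2 C_\uu M_\nu^{n-1}\L^{n-1}(I)$, while $\mu_\nu\ge 0$ is immediate. For covariance \ref{subad:cov}, set $\zeta:=M_\nu R_\nu(z',0)\in\Z^n$ and note that $\zeta\cdot\nu=0$, so $\bar u^\nu_0(\cdot+\zeta)=\bar u^\nu_0(\cdot)$, and that the height parameter $c$ in \eqref{def:interval-n-dim} depends only on the side-lengths of $I$, so $(I+z')_\nu=I_\nu+\zeta$. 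The translation $u\mapsto v\defas u(\cdot+\zeta)$ is then a bijection between the admissible classes, and stationarity of $f$ combined with a change of variable gives $\F(\omega)(u,I_\nu+\zeta)=\F(\tau^\nu_{z'}\omega)(v,I_\nu)$; taking infima yields $\mu_\nu(\omega,I+z')=\mu_\nu(\tau^\nu_{z'}\omega,I)$.

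For subadditivity \ref{subad:sub}, given a finite disjoint partition $I=\bigsqcup_{i\in F}I_i$ with half-heights $c_i\le c$, I introduce $J_i\defas M_\nu R_\nu(I_i\x[-c,c))$, which partition $I_\nu$ and contain $(I_i)_\nu$. The \emph{crucial observation} is that on $J_i\setminus(I_i)_\nu$ one has $|y\cdot\nu|=M_\nu|t|\ge M_\nu c_i>1$ (because $c_i\ge 1/2$ and $M_\nu\ge 3$), so by \ref{1dim-profile} the profile $\bar u^\nu_0$ is constantly $0$ or $1$ there and contributes zero energy. Hence for each $i$ I take an almost-optimal $u_i\in\Adm(\bar u^\nu_0,(I_i)_\nu)$, extend it by $\bar u^\nu_0$ on $J_i\setminus(I_i)_\nu$, and glue the pieces across the shared faces, where every extension agrees with $\bar u^\nu_0$ in a neighborhood (by the boundary conditions satisfied by the $u_i$). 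The resulting competitor lies in $\Adm(\bar u^\nu_0,I_\nu)$ with energy $\sum_i\F(\omega)(u_i,(I_i)_\nu)$, and dividing by $M_\nu^{n-1}$ and sending the approximation parameter to zero yields $\mu_\nu(\omega,I)\le\sum_{i\in F}\mu_\nu(\omega,I_i)$.

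The main obstacle is the measurability \ref{subad:meas}. For each fixed $u\in\Adm(\bar u^\nu_0,I_\nu)$, joint Borel measurability \ref{meas_f} of $f$ together with Fubini make $\omega\mapsto\F(\omega)(u,I_\nu)$ $\T$-measurable, so the issue is to reduce the infimum from an uncountable admissible class to a countable one. My plan is to exhibit a countable family $\mathcal D\subset\Adm(\bar u^\nu_0,I_\nu)$, built by adding to $\bar u^\nu_0$ rational linear combinations drawn from a fixed countable dense family of smooth functions compactly supported in the interior of $I_\nu$ (with truncation to keep values in $[0,1]$), and show that $\m_\omega(\bar u^\nu_0,I_\nu)=\inf_{u\in\mathcal D}\F(\omega)(u,I_\nu)$ for every $\omega$. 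This is where hypothesis \ref{cont_in_xi} is used: the lower semicontinuity of $f$ in $(u,\xi)$ combined with the growth bound \ref{hyp:ub-f} (the constraint $0\le u_\eps\le 1$ controls $W(u_\eps)$, while $|\nabla u_\eps|^p$ is equi-integrable along mollification) allows passage to the limit via Fatou and a dominated-convergence argument, giving $\F(\omega)(u_\eps,I_\nu)\to\F(\omega)(u,I_\nu)$. Alternatively, one can appeal directly to the von Neumann--Aumann projection theorem, using the completeness of $(\Omega,\T,P)$. Either route yields the $\T$-measurability of $\mu_\nu(\omega,I)$, completing the verification.
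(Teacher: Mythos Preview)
Your treatment of covariance, subadditivity, and boundedness is correct and essentially identical to the paper's (the paper also sets $\tau^\nu_{z'}\defas\tau_{M_\nu R_\nu(z',0)}$, glues almost-minimisers across the taller blocks $M_\nu R_\nu(I_i\times[-c,c))$, and observes that $\bar u^\nu_0\equiv u^\nu_0$ on $I_\nu\setminus\bigcup_i(I_i)_\nu$ because $M_\nu>2$ and $c_i\ge\tfrac12$). One minor slip: in your boundedness line the equality should be an inequality, $\F(\omega)(\bar u^\nu_0,I_\nu)\le c_2 C_\uu M_\nu^{n-1}\L^{n-1}(I)$.

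The measurability step, however, has a genuine gap. In your countable-dense route you need, for every admissible $u$, a sequence $u_\eps\in\mathcal D$ with $\limsup_\eps\F(\omega)(u_\eps,I_\nu)\le\F(\omega)(u,I_\nu)$ (this is the direction required to bound the countable infimum from above by the full one). But lower semicontinuity of $f$ in $(u,\xi)$ together with Fatou gives only $\liminf_\eps\F(\omega)(u_\eps,I_\nu)\ge\F(\omega)(u,I_\nu)$, i.e.\ the opposite inequality; dominated convergence on the majorant $c_2(W(u_\eps)+|\nabla u_\eps|^p)$ does not help, because pointwise convergence of the \emph{integrand} $f(\omega,x,u_\eps,\nabla u_\eps)$ would require continuity of $f$ in $(u,\xi)$, not merely lsc. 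Your projection-theorem alternative is closer in spirit, but it needs \emph{joint} $(\T\otimes\mathcal B(W^{1,p}(I_\nu)))$-measurability of $(\omega,u)\mapsto\F(\omega)(u,I_\nu)$, and $\T$-measurability for each fixed $u$ does not yield this.

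The paper closes this gap via Moreau--Yosida regularisation: it replaces $f$ by its inf-convolution $f^n(\omega,x,u,\xi)\defas\inf_{(v,\zeta)}\{f(\omega,x,v,\zeta)+n|(v,\zeta)-(u,\xi)|\}$, which is $n$-Lipschitz in $(u,\xi)$; this Lipschitz continuity (combined with measurability in $\omega$) makes $(\omega,u)\mapsto\F^n(\omega)(u)$ jointly measurable. Hypothesis \ref{cont_in_xi} then gives $f^n\nearrow f$ pointwise, hence $\F^n\nearrow\F$ by monotone convergence, so $(\omega,u)\mapsto\F(\omega)(u,I_\nu)$ is jointly measurable. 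Finally, since $\Adm(\bar u^\nu_0,I_\nu)$ is a separable metric space, $u\mapsto\F(\omega)(u,I_\nu)$ is lsc (again by \ref{cont_in_xi}) and not identically $+\infty$, and $(\Omega,\T,P)$ is complete, a measurable-selection lemma (the paper cites \cite[Lemma~C.2]{RufRuf}) yields $\T$-measurability of $\omega\mapsto\m_\omega(\bar u^\nu_0,I_\nu)$.
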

\begin{proof}
Let $\nu\in\S^{n-1}\cap\Q^n$ be fixed;  below we show that there exists a group of $P$-preserving transformations $(\tau_{z'}^\nu)_{z'\in\Z^{n-1}}$ for which  $\mu_\nu$ satisfies conditions~\ref{subad:meas}--\ref{subad:bound} of Definition \ref{sub_proc}.


\medskip

\textit{Step 1: measurability.} The measurabilty can be achieved by suitably adapting the proof of \cite[Lemma C.1.]{RufRuf}, for this reason here we only sketch it.
Let $I\in \mathcal I_{n-1}$ and let $I_\nu \subset \R^n$ be as in  \eqref{def:interval-n-dim}. 
For every $n\in\N$ we let $f^n$ be the Moreau-Yosida-regularisation of $f$ with respect to the last two variables, that is 
\begin{equation*}
	f^n(\omega,x,u,\xi)\defas\inf_{(v,\zeta)\in\R\times\R^n}\left\{f(\omega,x,v,\zeta)+n|(v,\zeta)-(u,\xi)|\right\}\,.
\end{equation*}
It is known that $f^n$ is $n$-Lipschitz in  $(u,\xi)$.  Then arguing as in the proof of \cite[Lemma C.1.]{RufRuf} it can be shown that the map $\F^n(\omega)\colon W^{1,p}(I_\nu)\to [0,+\infty)$  given by
\begin{equation*}
\F^n(\omega)(u)\defas \int_{I_\nu}f^n(\omega,x,u,\nabla u)\dx\,,
\end{equation*}
 is well defined for every $\omega\in\O$ and that $(\omega,u)\mapsto\F^n(\omega)(u)$ is
$\T\otimes \mathcal B(W^{1,p}(I_\nu))$-measurable. By \ref{cont_in_xi}  $f^n\nearrow f$ pointwise so that in particular $\F^n(\omega)(u)$ converges to $\F(\omega)(u,I_\nu)$ pointwise. This in turn implies that $(\omega,u)\mapsto\F(\omega)(u,I_\nu)$ is $\T\otimes \mathcal B(W^{1,p}(I_\nu))$-measurable as well.
Now recall that $(\O,\T,P)$ is a complete probability space, while  the set 
\begin{equation*}
\Adm(\bar{u}_0^\nu,I_\nu)= \{u\in W^{1,p}(I_\nu)\, ,\; 0\leq u \leq 1\,, \ u= \bar u^\nu_0\ \text{ near }\ \partial I_\nu 
\}\subset W^{1,p}(I_\nu)
\end{equation*}
defines a separable metric space, when endowed with the distance induced by the $W^{1,p}(I_\nu)$-norm. Moreover from \ref{cont_in_xi} the map $u\mapsto\F(\omega)(u,I_\nu)$ is lower semicontinuous and not constantly $+\infty$ since from \eqref{1dim-energy-bis} we have 
$
\F(\omega)(\bar u_0^\nu,I_\nu)<+\infty
$.
Hence we can apply \cite[Lemma C.2.]{RufRuf} to deduce the $\T$-measurability of $\om \mapsto \m_\om(\bar u^\nu_0,I_\nu)$ and in particular of $\om \mapsto \mu_\nu(\omega,I)$.  

\medskip

\textit{Step 2: covariance.} Let $z'\in\Z^{n-1}$ be fixed and let $I\in\I_{n-1}$.  Note that \eqref{def:interval-n-dim} implies the following equality
\begin{equation*}
(I+z')_\nu=I_\nu +M_\nu R_\nu(z',0)=I_\nu+z'_\nu\,,
\end{equation*}
where $z'_{\nu}:=M_\nu R_\nu(z',0) \in \Z^n\cap\Pi^\nu$.
Therefore we have
\begin{equation}\label{cov_1}
	\mu_\nu(\omega,I+z')=\frac{1}{M_\nu^{n-1}}\m_\omega(\bar u_0^\nu,I_\nu+z'_\nu)\,.
\end{equation}
Let $u\in \Adm(\bar{u}_0^\nu,I_\nu+z_\nu')$ and define $\tilde u(x)\defas u(x+z'_\nu)$.
Clearly $\tilde{u}\in\Adm(\bar{u}_0^\nu, I_\nu)$. Indeed since $z_\nu'\in\Pi^\nu$, we have 
$\tilde{u}=\bar{u}_0^\nu(\cdot+z'_\nu)=\bar{u}_0^\nu$
near $\partial I_\nu$. Further by  a change of variables and using the stationarity of $f$ we obtain the following 
\begin{equation*}
	\begin{split}
	\F(\omega)(u,{\rm int}\, (I_\nu+z'_\nu)) =&\int_{I_\nu+z'_\nu}f(\omega,x,u,\nabla u)\dx
	=\int_{I_\nu}f(\omega,x+z'_\nu,\tilde u,\nabla \tilde u)\dx\\=&\int_{I_\nu}f(\tau_{z'_\nu}(\omega),x,\tilde u,\nabla\tilde u)\dx=\F(\tau_{z'_\nu}(\omega))(\tilde u, {\rm int}\, I_\nu)\,.
	\end{split}
\end{equation*}
 Let  $(\tau^\nu_{z'})_{z'\in\Z^{n-1}}:=(\tau_{z'_\nu})_{z'\in\Z^{n-1}}$; then  $(\tau^\nu_{z'})_{z'\in\Z^{n-1}}$ is well defined since $z'_\nu\in\Z^n$ and it defines a group of $P$-preserving transformations on $(\Omega,\T,P)$. In particular the equality above becomes
\begin{equation}\label{cov_2}
	\F(\omega)(u, {\rm int}\,(I_\nu+z'_\nu))=\F(\tau_{z'}^\nu(\omega))(\tilde u,{\rm int}\, I_\nu)\,.
\end{equation}
Eventually, gathering \eqref{cov_1} and \eqref{cov_2}, by the arbitrariness of $u$ we infer 
\begin{equation*}
\mu_\nu(\omega,I+z')=
\mu_\nu(\tau^\nu_{z'}(\omega),I)\,,
\end{equation*}
and the covariance of $\mu_\nu$ with respect to $(\tau^\nu_{z'})_{z'\in\Z^{n-1}}$ is shown.

\medskip

\textit{Step 3: subadditivity.} Let  $\omega\in\Omega$, $I\in\I_{n-1}$, and let $\{I_1,\dots,I_N\}\subset\I_{n-1}$ be a finite family of pairwise disjoint sets such that $I=\bigcup_i I_i$.
For $\eta>0$ fixed and for every $i=1,\dots,N$ we let $u_i\in W^{1,p}((I_i)_\nu)$ be admissible for $\m_\omega(\bar u_0^\nu,(I_i)_\nu)$ such that 
\begin{equation}\label{sub_1}
	\F(\omega)(u_i,{\rm int}\,(I_i)_\nu)\le\m_\omega(\bar u_0^\nu,(I_i)_\nu)+\eta\,. 
\end{equation}
We let
\begin{equation*}
u:=\begin{cases}
u_i&\text{in }\;(I_i)_\nu\,,\ i=1\,,\ldots,N\,,\\
\bar u_0^\nu&\text{in }\; I_\nu\setminus\bigcup_i (I_i)_\nu\,,
\end{cases}
\end{equation*}
in this way $u\in  W^{1,p}(I_\nu;\R^m)$ and $u=\bar u_0^\nu$ near $\partial I_\nu$.
Hence in particular $u\in\Adm(\bar{u}_0^\nu,I_\nu)$ and  
\begin{equation}\label{est:subad}
\F(\omega)(u,{\rm int}\,I_\nu)=\sum_{i=1}^N\F(\omega)(u_i,{\rm int}\,(I_i)_\nu)+\F(\omega)\big(\bar u^\nu_0,{\rm int}\,(I_\nu\setminus\bigcup_{i=1}^N(I_i)_\nu)\big)\,.
\end{equation}
We observe that 
\begin{equation}\label{f=0}
\F(\omega)\big(\bar u^\nu_0,{\rm int}\,(I_\nu\setminus\bigcup_{i=1}^N(I_i)_\nu)\big)=0\,.
\end{equation}
Indeed, being $M_\nu>2$ and $c\geq\frac{1}{2}$ in~\eqref{def:interval-n-dim} we have that
$\{y\in I_\nu\colon|y\cdot\nu|\le 1\}\subset\bigcup_{i}(I_i)_\nu$ and thus
 $\bar u_0^\nu\equiv u_0^\nu$ in $I_\nu\setminus\bigcup_i(I_i)_\nu$. Now recalling~\eqref{f-value-at-0} we derive \eqref{f=0}.
Putting together ~\eqref{sub_1}--\eqref{f=0} we obtain
\begin{equation*}
\m_\omega(\bar u_0^\nu,I_\nu)\le \F(\omega)(u,{\rm int}\,I_\nu)=\sum_{i=1}^N\F(\omega)(u_i,{\rm int}\,(I_i)_\nu)\le \sum_{i=1}^N\m_\omega(\bar u_0^\nu,(I_i)_\nu)+N\eta\,,
\end{equation*}
and the subadditivity of $\mu_\nu$ follows by letting $\eta\to0$-

\medskip

\textit{Step 4: boundedness.} Let $\omega\in \Omega$ and $I\in\I_{n-1}$. From \eqref{def:interval-n-dim} and \eqref{1dim-energy-bis} we easily deduce
\begin{equation*}
0\le\mu_\nu(\omega, I)=\frac{1}{M_\nu^{n-1}}\m_\omega(\bar u_0^\nu,I_\nu)\le c_2 C_\uu\L^{n-1}(I).
\end{equation*}
\end{proof}
 Proposition~\ref{prop:subadditive} is the key ingredient to prove that almost surely the limit defining $f_\hom$ exists when $x=0$.

\begin{proposition}[Homogenised surface integrand for $x=0$]\label{prop:ex-limit-zero}
Let $f$ satisfy \ref{meas_f}-\ref{cont_in_xi} and assume that it is stationary with respect to a group $(\tau_z)_{z\in\Z^n}$ of $P$-preserving transformations on $(\Omega,\T,P)$. For $\omega\in\Omega$ let $\m_\omega$ be as in~\eqref{m-omega}. Then there exist $\tilde{\Omega}\in\mathcal T$ with $P(\tilde{\Omega})=1$ and a $(\T\otimes \mathcal B(\Sph^{n-1}))$-measurable function $f_{\hom}:\Omega\x\Sph^{n-1}\to[0,+\infty)$ such that
\begin{equation}\label{eq:ex-limit-zero}
\lim_{r\to+\infty}\frac{\m_\om(\bar{u}_{0}^\nu,Q_{r}^\nu(0))}{r^{n-1}}=f_\hom(\om,\nu)
\end{equation}
for every $\omega\in\tilde{\Omega}$ and every $\nu\in\Sph^{n-1}$. Moreover, $\tilde \Omega$ and $f_\hom$ are $(\tau_z)_{z\in \Z^n}$-translation invariant\ie $\tau_{z}(\tilde{\Omega})=\tilde{\Omega}$ for every $z\in \Z^n$ and
\begin{equation}\label{eq:shift-invariance}
f_\hom(\tau_z(\omega),\nu)=f_\hom(\omega,\nu),
\end{equation}
for every $z\in \Z^n$, for every $\omega\in\tilde{\Omega}$, and every $\nu\in\Sph^{n-1}$. Eventually, if $(\tau_z)_{z\in\Z^n}$ is ergodic then $f_{\hom}$ is independent of $\omega$ and given by
\begin{equation}\label{c:f-hom-er}
f_{\rm hom}(\nu)=
\lim_{r\to +\infty} \frac{1}{r^{n-1}}\int_\Omega \m_\omega(\bar u^\nu_0,Q^\nu_r(0))\dP(\omega).
\end{equation}
\end{proposition}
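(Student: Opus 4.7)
The plan is to apply the Akcoglu--Krengel subadditive ergodic theorem \cite{AK81} to the subadditive process $\mu_\nu$ constructed in Proposition \ref{prop:subadditive} for each rational direction $\nu\in \Sph^{n-1}\cap\Q^n$, and then to extend the resulting limit from rational to all directions by a density and upper semicontinuity argument, in the spirit of \cite{ACR11, CDMSZ19a}.

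For fixed $\nu\in \Sph^{n-1}\cap\Q^n$, Proposition \ref{prop:subadditive} provides the hypotheses of the subadditive ergodic theorem relative to the group $(\tau^\nu_{z'})_{z'\in\Z^{n-1}}$. Applied to the $(n-1)$-dimensional cubes $kQ'$ with $Q'=[-\tfrac{1}{2},\tfrac{1}{2})^{n-1}$, it produces a set $\Omega_\nu\in\T$ with $P(\Omega_\nu)=1$ and a $\T$-measurable function $\phi_\nu\colon\Omega\to[0,+\infty)$ such that
\begin{equation*}
\phi_\nu(\omega)=\lim_{k\to+\infty}\frac{\mu_\nu(\omega,kQ')}{k^{n-1}}=\lim_{k\to+\infty}\frac{\m_\omega(\bar u_0^\nu,Q_{M_\nu k}^\nu(0))}{(M_\nu k)^{n-1}}
\end{equation*}
for every $\omega\in\Omega_\nu$, where I used that $(kQ')_\nu=Q_{M_\nu k}^\nu(0)$ up to an $\L^n$-negligible set. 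To pass from the discrete scales $r_k:=M_\nu k$ to a continuous $r\to+\infty$, I would sandwich $r_k\le r<r_{k+1}$ and compare $\m_\omega(\bar u_0^\nu,Q_r^\nu(0))$ with $\m_\omega(\bar u_0^\nu,Q_{r_k}^\nu(0))$ and $\m_\omega(\bar u_0^\nu,Q_{r_{k+1}}^\nu(0))$ by extending almost-optimal test functions with the one-dimensional profile on the annular shell and invoking \eqref{1dim-energy-bis}; since $r_{k+1}/r_k\to 1$ the resulting error is $o(r^{n-1})$. This yields \eqref{eq:ex-limit-zero} at each rational direction and I set $f_\hom(\omega,\nu):=\phi_\nu(\omega)$ on $\Omega_\nu$.

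To extend the limit to arbitrary $\nu\in\Sph^{n-1}$ I let $\tilde\Omega:=\bigcap_{\nu\in D}\Omega_\nu$ with $D:=\Sph^{n-1}\cap\Q^n$ dense and countable, so that $P(\tilde\Omega)=1$. The key continuity input, which I would prove via a technical lemma in the Appendix by mimicking Lemma \ref{lemma:f'-f''}\ref{i}, is that for every $\omega$ and every large $r$ the map
\begin{equation*}
\nu\mapsto \frac{\m_\omega(\bar u_0^\nu,Q_r^\nu(0))}{r^{n-1}}
\end{equation*}
is upper semicontinuous on $\widehat{\Sph}^{n-1}_{\pm}$, with a modulus independent of $r$. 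This is achieved by using the fundamental estimate Proposition \ref{prop:fund-est} (applied to the macroscopic functional $\F(\omega)$, i.e.\ with $\e_k=1$) to glue an almost-optimal competitor for $Q_r^\nu(0)$ with the boundary datum $\bar u_0^{\nu'}$ on a thin shell around $\partial Q_r^{\nu'}(0)$, exactly as in Step 1 of the proof of Lemma \ref{lemma:f'-f''}. Passing to the limit $r\to+\infty$, both $\limsup_r r^{-(n-1)}\m_\omega(\bar u_0^\nu,Q_r^\nu(0))$ and $\liminf_r r^{-(n-1)}\m_\omega(\bar u_0^\nu,Q_r^\nu(0))$ are upper semicontinuous in $\nu$ on $\widehat{\Sph}^{n-1}_{\pm}$; since they coincide on $D$ with $\phi_\nu(\omega)$, they must agree everywhere, which defines $f_\hom(\omega,\cdot)$ on $\tilde\Omega\times\Sph^{n-1}$ and proves \eqref{eq:ex-limit-zero}. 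Joint $(\T\otimes\mathcal B(\Sph^{n-1}))$-measurability of $f_\hom$ is inherited from the measurability of $\phi_\nu$ at each rational $\nu$ together with the explicit upper-semicontinuous extension formula.

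Finally, for \eqref{eq:shift-invariance} stationarity yields
\begin{equation*}
\m_{\tau_z(\omega)}(\bar u_0^\nu,Q_r^\nu(0))=\m_\omega(\bar u_z^\nu,Q_r^\nu(z)),
\end{equation*}
and a further application of Proposition \ref{prop:fund-est} allows to replace $\bar u_z^\nu$ and $Q_r^\nu(z)$ by $\bar u_0^\nu$ and $Q_r^\nu(0)$ at a cost of order $r^{n-2}|z|$, which is negligible after dividing by $r^{n-1}$ since $z\in\Z^n$ is fixed; this gives the translation invariance of both $\tilde\Omega$ and $f_\hom$. When $(\tau_z)_{z\in\Z^n}$ is ergodic, Akcoglu--Krengel additionally delivers $\phi_\nu=\mathbb E[\mu_\nu(\cdot,Q')]$ at each rational $\nu$; integrating the discrete identity for $\mu_\nu(\cdot,kQ')/k^{n-1}$, exchanging limit and expectation via the uniform bound \eqref{c:bd-mu} and dominated convergence, and then extending to general $r$ and general $\nu$ by the same sandwich and continuity arguments used above, yields \eqref{c:f-hom-er}. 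The main technical obstacle will be the uniform-in-$r$ upper semicontinuity in $\nu$, since it requires a careful control of the gluing construction jointly as $r\to+\infty$ and $\nu'\to\nu$, and in particular a sharp quantification of the shell volumes used in the estimates of the type \eqref{est:uppersemicont-final}.
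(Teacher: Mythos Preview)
Your overall strategy matches the paper's three-step proof: apply the subadditive ergodic theorem for rational $\nu$ via Proposition~\ref{prop:subadditive}, pass from integer to real scales by a sandwich argument, extend to irrational $\nu$ by a continuity-in-$\nu$ argument, and deduce translation invariance from stationarity plus a cube-shifting estimate. The comparison estimates you sketch (changing the centre/size of the cube, and changing the direction $\nu$) are packaged in the paper as the appendix Lemmas~\ref{lem:cubes-shift} and~\ref{lem:cubes-rotation}, which are used exactly where you propose to use the fundamental estimate.

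There is, however, a genuine gap in your extension step. You assert that upper semicontinuity in $\nu$ of both
\[
\underline f(\omega,\nu):=\liminf_{r\to+\infty}\frac{\m_\omega(\bar u_0^\nu,Q_r^\nu(0))}{r^{n-1}}
\quad\text{and}\quad
\overline f(\omega,\nu):=\limsup_{r\to+\infty}\frac{\m_\omega(\bar u_0^\nu,Q_r^\nu(0))}{r^{n-1}},
\]
together with $\underline f=\overline f$ on the dense set $\Sph^{n-1}\cap\Q^n$, forces $\underline f=\overline f$ everywhere. This implication is false: take $\underline f\equiv 0$ and $\overline f$ equal to $0$ except at a single irrational direction where it equals $1$; both functions are upper semicontinuous, they agree on all rational directions, yet they differ. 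What is actually needed, and what the paper proves via the \emph{two-sided} estimate of Lemma~\ref{lem:cubes-rotation}, is that $\underline f(\omega,\cdot)$ and $\overline f(\omega,\cdot)$ are each \emph{continuous} on $\widehat{\Sph}^{n-1}_\pm$; two continuous functions agreeing on a dense set then coincide. Your gluing construction is in fact symmetric in $\nu$ and $\nu'$ and, carried out carefully (gluing a competitor for $Q^{\nu'}_{(1-\alpha)r}$ inside $Q^\nu_r$ as well as the converse), does yield such a two-sided bound; but as written, with only upper semicontinuity, the argument does not close. The joint measurability claim is also tied to this: the paper obtains it precisely from continuity in $\nu$ combined with $\T$-measurability in $\omega$ at each fixed $\nu$, not from an ``upper-semicontinuous extension formula''.
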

\begin{proof}
We divide the proof into three steps.  

\medskip

\textit{Step 1: 
existence of the limit for $\nu\in\Sph^{n-1}\cap\Q^n$.
}
In this step we show that there exists $\tilde\O\in\T$ with $P(\tilde\O)=1$ such that the following holds: for  every $\nu\in\Sph^{n-1}\cap\Q^n$ there is a $\mathcal T$-measurable function $f_\nu \colon \Omega \to [0,+\infty)$ such that
	\begin{equation*}
f_\nu (\om)=
 \lim_{r\to+\infty}\frac{\m_\om(\bar{u}_{0}^\nu,Q_{r}^\nu(0))}{r^{n-1}}\,, \quad \forall \om \in \tilde\Omega\,.
\end{equation*}

We fix $\nu\in\Sph^{n-1}\cap\Q^n$.  By Proposition~\ref{prop:subadditive} we know that the map $\mu_\nu$ given in \eqref{def:subad-process-g} is a subadditive process on $(\Omega,\T,P)$ with respect to some group of $P$-preserving transformations. This allows us to apply the Subadditive Ergodic  Theorem \cite[Theorem 2.4]{AK81} and deduce the following: let $I=[-1,1)^{n-1}$, so that $I_\nu=2M_\nu Q^\nu(0)$; then there exists a set $\Omega_\nu \in \mathcal T$, with $P(\Omega_\nu)=1$, and a $\mathcal T$-measurable function $f_\nu \colon \Omega \to [0,+\infty)$ such that
	\begin{equation}\label{erg-thm}
f_\nu (\om)=
\lim_{j\to +\infty}\frac{{\mu_\nu}(\om, j I)}{(2j)^{n-1}} = 
\lim_{j\to +\infty}\frac{\m_\om(\bar u_0^\nu, j2M_\nu Q^\nu(0))}{(j2M_\nu)^{n-1}}\,. 
\end{equation}
for every $\om \in \Omega_\nu$. \\

Let now $(r_j)$ be a sequence of strictly positive real numbers with $r_j \to +\infty$, as $j \to +\infty$ and define
\begin{equation*}
r_j^-\defas 2M_\nu\Big(\Big\lfloor \frac{r_j}{2M_\nu}\Big\rfloor-1\Big)\quad\text{and}\quad r_j^+\defas 2M_\nu\Big(\Big\lfloor \frac{r_j}{2M_\nu}\Big\rfloor+2\Big)\,.
\end{equation*}
For $j$ sufficiently large $r_j>4(1+M_\nu)$, and thus $r_j^->4$, moreover
\[  
Q^\nu_{r_j^-+2}(0)\wcont Q_{r_j}^\nu(0)\wcont Q_{r_j+2}^\nu(0)\wcont
Q_{r_j^+}^\nu(0)\,.
\]
We then can apply Lemma~\ref{lem:cubes-shift} twice: the first time
with $x=\tilde x=0$, $r=r_j^-$, and $\tilde r=r_j$ and the second one with $x=\tilde x=0$, $r=r_j^+$, and $\tilde r=r_j$ and get the two following estimates
\begin{equation}\label{up}
\frac{\m_\omega(\bar{u}_0^\nu,Q_{r_j}^\nu(0))}{r_j^{n-1}}\le 
\frac{\m_\omega(\bar{u}_{0}^\nu,Q_{r_j^-}^\nu(0))}{(r_j^-)^{n-1}}
+
\frac{L(r_j-r_j^-+1)}{r_j}
\,,
\end{equation}
\begin{equation}\label{low}
\frac{\m_\omega(\bar{u}_{0}^\nu,Q_{r_j}^\nu(0))}{r_j^{n-1}}\ge
\frac{\m_\omega(\bar{u}_0^\nu,Q_{r_j^+}^\nu(0))}{(r_j^+)^{n-1}}-
\frac{L(r_j^+-r_j+1)}{r_j}\,.
\end{equation} 
Using that $r_j^+-r_j\leq 4M_\nu$ and $r_j-r_j^-\leq 4M_\nu$,
thus thanks to \eqref{erg-thm}, passing to the limsup in \eqref{up} and to the liminf  in \eqref{low} as $j \to +\infty$  yield \begin{equation}\label{c:ls-real}
\limsup_{j \to +\infty} \frac{\m_\omega(\bar{u}_{0}^\nu,Q_{r_j}^\nu(0))}{r_j^{n-1}} \leq f_\nu(\om)\quad\forall\om \in \Omega_\nu,
\end{equation}
and
\begin{equation}\label{c:li-real}
\liminf_{j \to +\infty} \frac{\m_\omega(\bar{u}_{0}^\nu,Q_{r_j}^\nu(0))}{r_j^{n-1}} \geq f_\nu(\om) \quad\forall\om \in \Omega_\nu. 
\end{equation}
From \eqref{c:ls-real} and \eqref{c:li-real} we have that for every $\om \in \Omega_\nu$ the limit along $(r_j)$ exists and satisfies
\begin{equation*}
\lim_{j \to +\infty} \frac{\m_\omega(\bar{u}_{0}^\nu,Q_{r_j}^\nu(0))}{r_j^{n-1}}=f_\nu(\om).
\end{equation*}
Eventually to conclude we define
\[
\tilde\Omega:=\hspace{-0.3cm}\bigcap_{\nu\in\S^{n-1}\cap\Q^n}\hspace{-0.3cm}\Omega_\nu,
\]  
which satisfies the desired properties.

\medskip

\textit{Step 2: existence of the limit for $\nu\in\Sph^{n-1}\sm\Q^n$.} In this step we prove that there is a $(\T\otimes \mathcal B(\Sph^{n-1}))$-measurable function $f_{\hom}:\Omega\x\Sph^{n-1}\to[0,+\infty)$ such that \eqref{eq:ex-limit-zero} holds for every $\omega\in\tilde{\Omega}$ and every $\nu\in\Sph^{n-1}$.

Let $\underline{f}, \overline{f} \colon \tilde \Omega \times \Sph^{n-1} \to [0,+\infty]$ be given by
\begin{equation*}
\underline{f}(\omega,\nu)\defas\liminf_{r\to+\infty}\frac{\m_\omega(\bar{u}_0^\nu,Q_r^\nu(0))}{r^{n-1}}\,,\quad \overline{f}(\omega,\nu)\defas\limsup_{r\to+\infty}\frac{\m_\omega(\bar{u}_0^\nu,Q_r^\nu(0))}{r^{n-1}}.
\end{equation*}
We first observe that $\widehat{\Sph}_\pm^{n-1}\cap\Q^n$ is dense in $\widehat{\Sph}_\pm^{n-1}$.
Moreover in Step 1 we showed that $\underline{f}(\omega,\nu)=\overline{f}(\omega,\nu)=f_\nu(\om)$, for every $\om \in \tilde \Omega$ and for every $\nu \in \Sph^{n-1}\cap\Q^n$. 
Therefore, in order to obtain the same equality for every for every $\omega\in\tilde{\Omega}$ and every $\nu \in \Sph^{n-1}$, it is enough to show that
 the restrictions of the functions $\nu\mapsto\underline f(\omega,\nu)$ and $\nu\mapsto\overline f(\omega,\nu)$ to the sets $\widehat{\S}_\pm^{n-1}$ are continuous. Further once we have that we can also deduce the following:
\[
\om \mapsto \overline f (\om,\nu) \; \text{ is $\mathcal T$-measurable in $\tilde \Omega$, for every $\nu \in \Sph^{n-1}$} 
\]
together with
\[
\nu \mapsto \overline f (\om,\nu) \; \text{ is continuous in $\widehat \Sph^{n-1}_\pm$, for every $\om \in \tilde \Omega$}\,,
\]
readily imply that the restriction of $\overline f$ to $\tilde \Omega \times \widehat \Sph^{n-1}_\pm$ is measurable with respect to the $\sigma$-algebra induced in $\tilde \Omega \times \widehat \Sph^{n-1}_\pm$ by $\mathcal T \otimes \mathcal B(\Sph^{n-1})$. 
Hence the claim  follows by setting 
\begin{equation}\label{c:def-f-hom}
f_{\hom}(\om,\nu):=\begin{cases} \overline f(\om,\nu) & \text{if }\; \om \in \tilde \Omega, 
\cr
c_2 c_p & \text{if }\; \om \in \Omega \setminus \tilde \Omega.
\end{cases}
\end{equation}

We now prove that $\overline f(\omega,\cdot)$ is continuous in $\widehat\S^{n-1}_+$. The other proofs are  analogous and therefore are left to the readers. 
Let $\nu\in\widehat{\S}_+^{n-1}$, $(\nu_j)\subset\widehat{\S}_+^{n-1}$ be such that $\nu_j\to\nu$, as $j \to +\infty$. Then, for every $\alpha\in (0,\frac{1}{2})$ we find $j_\alpha \in \N$ such that \eqref{c:30-condition} holds true for every $j\geq j_\alpha$. Hence we can apply Lemma~\ref{lem:cubes-rotation} with $x=0$ and $\tilde \nu =\nu_j$ and get
\begin{equation*}
\m_\om(\bar u^{\nu_j}_0, Q^{\nu_j}_{(1+\alpha)r}(0)) -c_\alpha r^{n-1} \leq \m_\om(\bar u^{\nu}_0, Q^{\nu}_{r}(0)) \leq \m_\om(\bar u^{\nu_j}_0, Q^{\nu_j}_{(1-\alpha)r}(0)) +c_\alpha r^{n-1},
\end{equation*}
where $c_\alpha \to 0$, as $\alpha \to 0$. 
Now dividing the above inequality by $r^{n-1}$ and passing to the limsup as $r \to +\infty$ we get 
 \begin{align}\label{c:eins}
(1+\alpha)^{n-1}\,\overline{f}(\omega,\nu_j) 
&\le \overline f(\omega,\nu)+c_\alpha\,,\\\label{c:zwei}
(1-\alpha)^{n-1}\,\overline{f}(\omega,\nu_j)&\geq \overline f(\omega,\nu)-c_\alpha\,.
 \end{align}
Eventually passing to the limsup as $j\to+\infty$ in \eqref{c:eins} and to the liminf as $j\to+\infty$ in \eqref{c:zwei}, and letting $\alpha\to0$ we have
\begin{equation*}
	\limsup_{j\to+\infty}\overline{f}(\omega,\nu_j)\le\overline f(\omega,\nu)\leq\liminf_{j\to+\infty}\overline{f}(\omega,\nu_j)\,,
\end{equation*}
and the proof of step 2 is achieved.

\medskip

\textit{Step 3: $(\tau_z)_{z\in \Z^n}$-translation invariance.} In this step we show that $\tilde \Omega$ and $f_\hom$ are $(\tau_z)_{z\in \Z^n}$-translation invariant.

Let $z\in\Z^n$, $\omega\in\tilde\Omega$, and $\nu\in\S^{n-1}$ be fixed.  Let $r>4$ \EEE
and $ u\in \Adm(\bar u^\nu_0,Q^\nu_r(0))$ satisfying
\begin{equation}\label{c:quasi-min-s}
\F(\omega)(u,Q_r^\nu(0))\le\m_\omega(\bar u^\nu_0,Q^\nu_r(0))+1\,. 
\end{equation}
Setting $\tilde u(y):=u(y+z)$, then since $f$ is stationary there holds 
\begin{equation*}
\F(\omega)(u,Q_r^\nu(0))=\F(\tau_z(\omega))(\tilde u,Q_r^\nu(-z)).
\end{equation*}
This together with \eqref{c:quasi-min-s} and the fact that $\tilde u\in\Adm(\bar u^\nu_{-z},Q^\nu_r(-z))$ yield 
\begin{equation}\label{shift_prob1}
	\m_{\tau_z(\omega)}(\bar u^\nu_{-z},Q^\nu_r(-z))\le\m_\omega(\bar u^\nu_0,Q^\nu_r(0))+1\,. 
\end{equation}
We choose $r, \tilde r$  such that $\tilde r> r$ and
\begin{equation*}
	Q^\nu_{r+2}(-z)\wcont Q^\nu_{\tilde r}(0)\; \text{ and }\;  
\dist(0,\Pi^\nu(-z))\le\frac r4.
\end{equation*}
We next apply Lemma \ref{lem:cubes-shift} twice: once with $x=-z$ and $\tilde x=0$ to the minimisation problem $\m_{\tau_z(\omega)}$ and once with $x=z$ and $\tilde x=0$ to the minimisation problem $\m_{\omega}$ and get 
\begin{equation}\label{shift_prob2}
    \m_{\tau_z(\omega)}(\bar{u}_{0}^\nu,Q_{\tilde{r}}^\nu(0)) \leq\m_{\tau_z(\omega)}(\bar{u}_{-z}^\nu,Q_{r}^\nu(-z))+L\big(|z|+|r-\tilde{r}|+1\big)(\tilde{r})^{n-2}\,,
\end{equation}
and
\begin{equation}\label{shift_prob3}
        \m_{\omega}(\bar{u}_{0}^\nu,Q_{\tilde{r}}^\nu(0)) \leq\m_{\omega}(\bar{u}_{z}^\nu,Q_{r}^\nu(z))+L\big(|z|+|r-\tilde{r}|+1\big)(\tilde{r})^{n-2}\,.
\end{equation}
Hence, combining \eqref{shift_prob1} with \eqref{shift_prob2}  and \eqref{shift_prob3} we have
\begin{equation}\label{c:uno-ti}
	\frac{\m_{\tau_z(\omega)}(\bar{u}_{0}^\nu,Q_{\tilde{r}}^\nu(0))}{\tilde r^{n-1}}\le
	\frac{\m_{\omega}(\bar u^\nu_0,Q^\nu_r(0))+1}{r^{n-1}}+\frac{L\big(|z|+|r-\tilde{r}|+1\big)}{\tilde r}\,,
\end{equation}
and
\begin{equation}\label{c:due-ti}
\frac{\m_\omega(\bar{u}_{0}^\nu,Q_{\tilde{r}}^\nu(0))}{\tilde r^{n-1}}\le
\frac{\m_{\tau_z(\omega)}(\bar u^\nu_0,Q^\nu_r(0))+1}{r^{n-1}}+\frac{L\big(|z|+|r-\tilde{r}|+1\big)}{\tilde r}\,.
\end{equation}
We now take in \eqref{c:uno-ti} the limsup as $\tilde r\to+\infty$ and the limit as $r\to+\infty$ and find
\begin{equation}\label{c:ti-1}
\limsup_{\tilde r\to+\infty}	\frac{\m_{\tau_z(\omega)}(\bar{u}_{0}^\nu,Q_{\tilde{r}}^\nu(0))}{\tilde r^{n-1}}\le f_\hom(\omega,\nu).
\end{equation}
Similarly we  take in \eqref{c:due-ti} the limit as $\tilde r\to+\infty$ and the liminf as $r\to+\infty$ and obtain
\begin{equation}\label{c:ti-2}
f_\hom(\omega,\nu)\le \liminf_{r\to+\infty}\frac{\m_{\tau_z(\omega)}(\bar{u}_{0}^\nu,Q_{r}^\nu(0))}{r^{n-1}}.
\end{equation}
Gathering \eqref{c:ti-1} and \eqref{c:ti-2} we deduce that
$\tau_z(\omega)\in\tilde\Omega$ and that
\[
f_\hom(\tau_z(\omega),\nu)=f_\hom(\omega,\nu)\,,
\]
for every $z\in \Z$, $\omega\in\tilde{\Omega}$, and $\nu\in\Sph^{n-1}$.
Notice also that thanks to the group properties of $(\tau_z)_{z\in\Z^n}$ we also have that $\om \in \tau_z(\tilde \Omega)$, for every $z\in \Z^n$. 

We conclude by observing that if $(\tau_z)_{z\in\Z^n}$ is ergodic, then the independence of $\omega$ of the function of $f_\hom$ follows by~\eqref{eq:shift-invariance} (cf.~\cite[Corollary 6.3]{CDMSZ19a}) and \eqref{c:f-hom-er} follows by integrating \eqref{eq:ex-limit-zero} on $\Omega$ and by using the Dominated Convergence Theorem, thanks to \eqref{c:bd-mu} (see also \eqref{def:subad-process-g}).
\end{proof}
We conclude this section by establishing the last crucial result which extends Proposition \ref{prop:ex-limit-zero} to the case of an arbitrary $x\in \R^n$. More precisely, Proposition \ref{prop:ex-limit-x} below states that the limit in \eqref{eq:ex-limit-zero} exists when $x=0$ is replaced by any $x\in \R^n$ and that it is $x$-independent, and hence it coincides with \eqref{eq:ex-limit-zero}. 

The proof of the following proposition can be obtained arguing exactly as in \cite[Theorem 6.1]{CDMSZ19a} (see also~\cite[Theorem 5.5]{ACR11}), now appealing to Proposition \ref{prop:ex-limit-zero}, Lemma~\ref{lem:cubes-shift}, and Lemma~\ref{lem:cubes-rotation}. For this reason we skip its proof here. 

\begin{proposition}[Homogenised surface integrand]\label{prop:ex-limit-x}
Let $f$ satisfy \ref{meas_f}-\ref{cont_in_xi} and assume that it is stationary with respect to a group $(\tau_z)_{z\in\Z^n}$ of $P$-preserving transformations on $(\Omega,\T,P)$. For $\omega\in\Omega$ let $\m_\omega$ be as in~\eqref{m-omega}. Then there exists $\Omega'\in \T$ with $P(\Omega')=1$ such that 
\begin{equation}\label{eq:ex-limit-x}
\lim_{r\to+\infty}\frac{\m_\omega(\bar{u}_{rx}^\nu,Q_{r}^\nu(rx))}{r^{n-1}}=f_\hom(\om,\nu)
\end{equation}
for every $\omega\in\Omega'$, every $x\in \R^n$, and every $\nu\in\Sph^{n-1}$, where $f_\hom$ is given by~\eqref{eq:ex-limit-zero}. In particular, the limit in~\eqref{eq:ex-limit-x} is independent of $x$. Moreover, if $(\tau_z)_{z\in\Z^n}$ is ergodic, then $f_\hom$ is independent of $\omega$ and given by~\eqref{c:f-hom-er}.
\end{proposition}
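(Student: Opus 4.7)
The plan is to follow the scheme of \cite[Theorem 6.1]{CDMSZ19a} and \cite[Theorem 5.5]{ACR11}, using Proposition \ref{prop:ex-limit-zero} as the starting point. Let $\tilde\Omega\in\T$ be the $(\tau_z)_{z\in\Z^n}$-invariant full-measure set on which the limit at $x=0$ exists and equals $f_\hom(\omega,\nu)$, and recall that $f_\hom(\cdot,\nu)$ is itself $(\tau_z)_{z\in\Z^n}$-invariant. I aim to construct a further $\T$-measurable set $\Omega'\subseteq\tilde\Omega$ of full probability, on which the $x$-dependent limit \eqref{eq:ex-limit-x} also exists and coincides with $f_\hom(\omega,\nu)$.

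The first reduction is to integer centers. Fix $\omega\in\tilde\Omega$, $\nu\in\Sph^{n-1}$, $x\in\R^n$, and for each $r>0$ choose $z_r\in\Z^n$ with $|z_r-rx|\le \sqrt n$. A single application of Lemma \ref{lem:cubes-shift} (with displacement of centers bounded by $\sqrt n$ and $\tilde r=r$) gives
\[
\big|\m_\omega(\bar u^\nu_{rx},Q_r^\nu(rx))-\m_\omega(\bar u^\nu_{z_r},Q_r^\nu(z_r))\big|\le L(\sqrt n+1)\,r^{n-2}=o(r^{n-1}),
\]
and the stationarity of $f$ (as in Step 2 of the proof of Proposition \ref{prop:subadditive}) converts $\m_\omega(\bar u^\nu_{z_r},Q_r^\nu(z_r))$ into $\m_{\tau_{z_r}(\omega)}(\bar u^\nu_0,Q_r^\nu(0))$. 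The problem therefore reduces to showing that
\[
\lim_{r\to+\infty}\frac{\m_{\tau_{z_r}(\omega)}(\bar u^\nu_0,Q_r^\nu(0))}{r^{n-1}}=f_\hom(\omega,\nu).
\]

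This is the main obstacle: the base point $\tau_{z_r}(\omega)$ drifts with $r$ along the $\Z^n$-orbit, so Proposition \ref{prop:ex-limit-zero} cannot be invoked directly at a single fixed $\omega$. The strategy is to bracket the quantity above by two expressions accessible via Proposition \ref{prop:ex-limit-zero}. A second use of Lemma \ref{lem:cubes-shift} relates $\m_{\tau_{z_r}(\omega)}(\bar u^\nu_0,Q_r^\nu(0))$ to the corresponding quantity on a much larger cube centered at $0$, while the subadditivity of the process $\mu_\nu$ established in Proposition \ref{prop:subadditive} allows one to tile $Q_R^\nu(0)$ ($R\gg r$) with translated copies of $Q_r^\nu$ whose centers lie in a sublattice of $\Pi^\nu$, so that the boundary data remain equal to $\bar u_0^\nu$ up to bounded errors. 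Sending first $r\to+\infty$ and then $R\to+\infty$, and applying Proposition \ref{prop:ex-limit-zero} to the enlarged cube to identify $\lim_{R\to\infty}\m_\omega(\bar u^\nu_0,Q_R^\nu(0))/R^{n-1}=f_\hom(\omega,\nu)$, yields matching upper and lower bounds for the $\limsup$ and $\liminf$ of the target expression, both equal to $f_\hom(\omega,\nu)$.

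The above argument works for rational $x\in\Q^n$ and rational $\nu\in\Sph^{n-1}\cap\Q^n$, so the countable intersection over rational data inside $\tilde\Omega$ still has full measure; this defines $\Omega'$. The extension to arbitrary $x\in\R^n$ and $\nu\in\Sph^{n-1}$ is then carried out exactly as in Step 2 of the proof of Proposition \ref{prop:ex-limit-zero}: Lemma \ref{lem:cubes-rotation} together with the density of $\Sph^{n-1}\cap\Q^n$ in $\widehat\Sph_\pm^{n-1}$ yields continuity of the restrictions of the relevant liminf/limsup to each hemisphere, and a further bounded-shift application of Lemma \ref{lem:cubes-shift} handles non-rational $x$ (the necessary displacement of centers being again uniformly bounded, hence producing only an $o(r^{n-1})$ error). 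The $x$-independence of $f_\hom$ and, in the ergodic case, the representation formula \eqref{c:f-hom-er}, are direct consequences of the identification with the $x=0$ limit already provided by Proposition \ref{prop:ex-limit-zero} together with the dominated convergence theorem applied via the uniform bound \eqref{c:bd-mu}.
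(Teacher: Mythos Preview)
Your proposal follows precisely the route the paper itself indicates: the paper does not give a self-contained proof but refers to \cite[Theorem 6.1]{CDMSZ19a} and \cite[Theorem 5.5]{ACR11}, invoking Proposition~\ref{prop:ex-limit-zero}, Lemma~\ref{lem:cubes-shift}, and Lemma~\ref{lem:cubes-rotation}, and your plan uses exactly these ingredients in the same way. One small technical remark: Lemma~\ref{lem:cubes-shift} requires $Q^\nu_{r+2}(x)\subset\subset Q^\nu_{\tilde r}(\tilde x)$, so you cannot take $\tilde r=r$ literally when shifting centers; you must enlarge $\tilde r$ by a bounded amount (as done, e.g., in Step~3 of the proof of Proposition~\ref{prop:ex-limit-zero}), but this only adds an $O(r^{n-2})$ error and does not affect the argument.
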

We conclude this section with the proof of Theorem \ref{thm:stoch_hom_2}.
\begin{proof}[Proof of Theorem \ref{thm:stoch_hom_2}]
The proof follows by Theorem~\ref{thm:hom} now invoking Proposition \ref{prop:ex-limit-x}.
\end{proof}
\section{Periodic Homogenisation}
\label{sect:periodic-hom}In this last section we prove a periodic homogenisation result without requiring any lower semicontinuity  on the integrand $f$ (i.e., without assuming \ref{cont_in_xi}).\\ 

A careful inspection of the proof of Theorem \ref{thm:stoch_hom_2} shows  that condition \ref{cont_in_xi} is used only once in the proof of Proposition \ref{prop:subadditive} to establish the $\T$-measurability of the map $\mu_\nu$ defined in \eqref{def:subad-process-g}.
This suggests that in the periodic setting  Theorem \ref{thm:stoch_hom_2} should still be true if we drop condition \ref{cont_in_xi}.
\\


Let $f\in\mathcal F$ be such that for every $u\in\R$ and every $\xi\in\R^n$ $f(\cdot,u,\xi)$ is $Q$-periodic. Let $f_k$ and $\F_k$ 
be 
defined as in \eqref{eq:f-osc} and \eqref{F_e} accordingly.
We now state and prove the following result.
\begin{theorem}[Periodic homogenisation]\label{thm:det_hom_2}
Let $f\in\mathcal{F}$ be such that for every $u\in\R$ and every $\xi\in\R^n$ $f(\cdot,u,\xi)$ is $Q$-periodic.  Let  $\m$ be as in~\eqref{eq:m-bis}.  Then the limit
	\begin{equation}\label{eq:f-per-hom}
	\lim_{r\to +\infty} \frac{\m(\bar u^\nu_{rx},Q^\nu_r(rx))}{r^{n-1}}=
	\lim_{r\to +\infty} \frac{\m(\bar u^\nu_{0},Q^\nu_r(0))}{r^{n-1}}=:f_\hom(\nu)
	\end{equation}
exists and is independent of $x$. The function  $f_\hom \colon  \Sph^{n-1} \to [0,+\infty)$ is $ \mathcal B(\Sph^{n-1})$-measurable.

Moreover, for every $A\in\A$, the functionals $\F_k(\cdot,A)$, defined in \eqref{F_e} with $f_k$ as in \eqref{eq:f-osc}, $\Gamma$-converge in $ L^1_\loc(\R^n)$ to the functional $\F_{\hom}(\cdot,A)$ with $\F_{\hom}\colon  L^1_\loc(\R^n)\times\A\longrightarrow[0,+\infty]$ given by
\begin{equation*}
\F_{\rm hom}(u,A)\defas
\begin{cases}
\displaystyle\int_{S_u \cap A} f_{\rm hom}(\nu_u)\dHn &\text{if}\ u \in BV(A;\{0,1\})\, \\[4pt]
+\infty &\text{otherwise}\,.
\end{cases}
\end{equation*}
\end{theorem}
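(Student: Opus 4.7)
The plan is to derive Theorem \ref{thm:det_hom_2} from the abstract deterministic homogenisation result Theorem \ref{thm:hom}, by establishing the existence and $x$-independence of the limit \eqref{eq:f-per-hom} together with Borel measurability of $f_{\hom}$. The strategy is to specialise the arguments of Section \ref{sect:stochastic-homogenisation} to the trivial probability space $(\Omega,\T,P) := (\{*\},\{\emptyset,\{*\}\},\delta_*)$ with the trivial group $\tau_z \equiv \mathrm{id}_\Omega$ for every $z\in\Z^n$. Under this identification the $Q$-periodicity of $f$ is exactly the stationarity of $f$, regarded as a constant-in-$\omega$ random integrand, and the group action is vacuously ergodic.

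The key observation making this reduction work is that assumption \ref{cont_in_xi} enters Section \ref{sect:stochastic-homogenisation} only once, in Step 1 of Proposition \ref{prop:subadditive}, where it is used to ensure the $\T$-measurability of $\omega\mapsto\mu_\nu(\omega,I)$. In our trivial setting this measurability is automatic since every real-valued function on a one-point set is measurable. Consequently the deterministic set function
\[
\mu_\nu(I) := \frac{1}{M_\nu^{n-1}}\,\m(\bar u_0^\nu, I_\nu),
\]
with $I_\nu$ as in \eqref{def:interval-n-dim}, is a subadditive process on $\mathcal I_{n-1}$ which is translation invariant along $\Z^{n-1}$: subadditivity follows verbatim from Step 3 of the proof of Proposition \ref{prop:subadditive}, while covariance (now reducing to plain translation invariance) follows from the change of variables $\tilde u(y) = u(y+z_\nu')$ combined with $Q$-periodicity of $f$ in place of stationarity. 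The deterministic instance of the subadditive ergodic theorem \cite[Theorem 2.4]{AK81}, or equivalently a Fekete-type argument on the numerical sequence $\mu_\nu(jI)/j^{n-1}$, then delivers existence of the limit for every rational direction $\nu\in\Sph^{n-1}\cap\Q^n$.

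From here the remaining steps transcribe the proofs of Propositions \ref{prop:ex-limit-zero} and \ref{prop:ex-limit-x} with the $\omega$-dependence suppressed. Lemma \ref{lem:cubes-shift} first transfers the existence of the limit from integer scales to arbitrary real radii $r\to+\infty$ at $x=0$, and then propagates it from $x=0$ to any $x\in\R^n$, yielding the $x$-independence asserted in \eqref{eq:f-per-hom}. Lemma \ref{lem:cubes-rotation} provides continuity of the candidate limit $\nu\mapsto f_{\hom}(\nu)$ on each open hemisphere $\widehat{\Sph}^{n-1}_\pm$, which simultaneously extends the limit to irrational directions and provides the Borel measurability of $f_{\hom}$. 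Once existence and $x$-independence of \eqref{eq:f-per-hom} are in hand, the $\Gamma$-convergence claim is an immediate application of Theorem \ref{thm:hom}.

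The only genuinely delicate point I anticipate is the bookkeeping task of verifying that no other step in Section \ref{sect:stochastic-homogenisation} or the associated Appendix lemmas covertly relies on \ref{cont_in_xi}. This reduces to inspecting each argument and confirming that it uses only the two-sided bounds encoded in $\mathcal F$, the explicit one-dimensional profile $\uu$, and the one-dimensional energy bound \eqref{1dim-energy-bis}—none of which require any semicontinuity of $f$ in $(u,\xi)$. I do not foresee any technical obstacle beyond this verification.
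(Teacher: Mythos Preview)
Your proposal is correct, and the verification you flag in the last paragraph goes through: a careful read of Propositions \ref{prop:subadditive}--\ref{prop:ex-limit-x} and Lemmas \ref{lem:cubes-shift}--\ref{lem:cubes-rotation} confirms that \ref{cont_in_xi} is invoked only in Step 1 of Proposition \ref{prop:subadditive}, which collapses on a one-point probability space.

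However, the paper takes a genuinely different route. Rather than reducing to the stochastic framework with a trivial probability space, it gives a direct, self-contained argument that avoids the Subadditive Ergodic Theorem altogether. For Step 1 (existence of the limit at $x=0$, rational $\nu$), the paper tiles a large cube $Q_s^\nu(0)$ with $\sim (s/r)^{n-1}$ translated copies of a near-minimiser on $Q_r^\nu(0)$, using the $Q$-periodicity of $f$ directly, and compares $\m(\bar u_0^\nu,Q_s^\nu(0))/s^{n-1}$ with $\m(\bar u_0^\nu,Q_r^\nu(0))/r^{n-1}$ by hand; this is essentially a Fekete-type computation carried out explicitly. For Step 2 ($x$-independence), the paper again argues directly: it shifts the centre $rx$ to an integer lattice point $x_r$ (where periodicity applies), compares via Lemma \ref{lem:cubes-shift}, and sends $r\to+\infty$. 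Both approaches then share the continuity-in-$\nu$ argument (Lemma \ref{lem:cubes-rotation}) to pass to irrational directions and obtain Borel measurability. Your reduction is more economical and conceptually cleaner in that it reuses the stochastic machinery wholesale; the paper's argument is more elementary in that it never invokes ergodic theory, making the periodic case logically independent of Section \ref{sect:stochastic-homogenisation}.
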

\begin{proof}
From Theorem \ref{thm:hom} it is sufficient to show that the limit in \eqref{eq:f-per-hom} exists and is independent of $x$ and that $f_\hom$ is $ \mathcal B(\Sph^{n-1})$-measurable. We divide the proof into a nuber of steps.\\

\textit{Step 1: 
existence of the limit for $x=0$. 
}
In this step we show that for $x=0$ and for every $\nu\in \Sph^{n-1} $ the following limit in \eqref{eq:f-per-hom} exists.
Let $\nu\in \Sph^{n-1}$ and let $s>r>0$ be fixed.
By reasoning as in step 2 of the proof of Proposition \ref{prop:ex-limit-zero} we can  assume without loss of generality that $\nu\in\Sph^{n-1}\cap\Q^n$. Next we choose $u_r\in \Adm(\bar u^\nu_0,Q_r^\nu(0))$ such that 
 \begin{equation}\label{quasi-min}
     \F(u_r,Q_r^\nu(0))\le \m(\bar u^\nu_{0},Q^\nu_r(0))+1\,.
 \end{equation}
In what follows we  extend $u_r$ to $Q^\nu_s(0)$ to get a new function $u_s\in \Adm(\bar u^\nu_0,Q_s^\nu(0))$ without essentially increasing the energy. 
For every $z\in\Z^{n-1}\times\{0\}$ we set
\begin{equation*}
z_r^\nu\defas\left(\left\lfloor \frac{r}{M_\nu}\right\rfloor+1\right)M_\nu R_\nu z\,,\quad
    \tilde Q_z\defas Q_r^\nu(0)+z_r^\nu\,,
\end{equation*}
with $R_\nu$ as in \ref{Rn} and $M_\nu$ defined as in section \ref{sect:cell-formulas}, and let
\begin{equation*}
    I_s\defas\left\{z\in\Z^{n-1}\times\{0\}\colon  \tilde Q_z\subset Q_s^\nu(0)\right\}\,.
\end{equation*}
Note that by definition of $M_\nu$ and $R_\nu$ we have $z_r^\nu\in\Z^n\cap\Pi_\nu$ for every $z\in\Z^{n-1}\times\{0\}$.
Moreover a direct computation shows that
\begin{equation}\label{cardinalita}
    s^{n-1}\left(\frac{1}{r+1}-\frac1s\right)^{n-1}
\le \#(I_s)\le \frac{s^{n-1}}{r^{n-1}}\,.
\end{equation}
 Let us define
\begin{equation*}
    u_s(y)\defas\begin{cases}
 u_r(y-z_r^\nu)&\text{ if }y\in \tilde Q_z,\, z\in I_s\,,\\
 \bar u^\nu_{0}(y)&\text{ otherwise in } Q^\nu_s(0)\,.
    \end{cases}
\end{equation*}
Clearly $u_s\in \Adm(\bar u^\nu_0,Q_s^\nu(0))$. Furthermore a change of variable together with the the $Q$-periodicity of $f$ yield
\begin{equation}\label{QsQr}
    \begin{split}
       \F(u_s,Q_s^\nu(0))   \le \#(I_s)  \F(u_r,Q_r^\nu(0)) + \F(\bar u^\nu_{0}, Q_s^\nu(0)\setminus \cup_{z\in I_s}\tilde Q_z)\,.
    \end{split}
\end{equation}
Moreover by \eqref{1dim-energy-bis} 
we may deduce that 
\begin{equation}\label{sec-termine}
     \F(\bar u^\nu_{0}, Q_s^\nu(0)\setminus \cup_{z\in I_s}\tilde Q_z)\le c_2C_\uu(s^{n-1}- \#(I_s)r^{n-1})\,.
\end{equation}
Gathering together \eqref{cardinalita}-\eqref{sec-termine} we obtain
\begin{equation} \label{this}
\begin{split}
     \frac{\m(\bar u^\nu_{0},Q^\nu_s(0))}{s^{n-1}}\le \frac{ \F(u_s,Q_s^\nu(0)) }{s^{n-1}}\le\frac{\F(u_r,Q_r^\nu(0))}{r^{n-1}}    +c_2C_\uu \left(1- r^{n-1}\Big(\frac{1}{r+1}-\frac1s\Big)^{n-1}\right)\,.
\end{split}
\end{equation}
Finally combining \eqref{this} with \eqref{quasi-min} and passing first to the limsup as  $s\to+\infty$ and then to the liminf as $r\to+\infty$ we get
\begin{equation*}
    \limsup_{s\to+\infty} \frac{\m(\bar u^\nu_{0},Q^\nu_s(0))}{s^{n-1}}\le \liminf_{r\to+\infty} \frac{\m(\bar u^\nu_{0},Q^\nu_r(0))}{r^{n-1}}\,,
\end{equation*}
and, being the converse inequality trivial, the proof of Step 1 is achieved.

\textit{Step 2: 
existence of the limit for every $x$.
} In this step we show that for every $x\in\R^n$ the limit in \eqref{eq:f-per-hom} exists and is $x$-independent. Let $x\ne0$, and let $r\ge4\sqrt{n}$ be fixed. Let  $u_r\in \Adm(\bar u^\nu_0,Q_r^\nu(0))$ be such that \eqref{quasi-min} holds. Define $x_r\defas\lfloor (r+3)x\rfloor$ (i.e., the vector whose componets are the integer parts of the components of $(r+3)x$) so that $|(r+3)x-x_r |\le\sqrt{n}$
and moreover
\begin{equation*}
    Q_{r+2}^\nu( x_r)\subset \subset Q_{r+3}^\nu( (r+3)x)\,, \quad\text{ and } \quad
    \dist(  (r+3)x,\Pi^\nu( x_r))\leq\frac{r}{4}\,.
\end{equation*}
The function  $\hat u_r(y)\defas u_r(y- x_r)$ belongs to $\Adm(\bar u^\nu_{x_r},Q_r^\nu( x_r))$. Moreover by $Q$-periodicity of $f$ and \eqref{quasi-min} we have 
\begin{equation} \label{comb1}
\m(\bar u^\nu_{ x_r},Q^\nu_r( x_r))\le
\F(\hat u_r,Q_r^\nu(x_r))=
    \F(u_r,Q_r^\nu(0))\le \m(\bar u^\nu_{0},Q^\nu_r(0))+1\,.
\end{equation}
Next we invoke Lemma \ref{lem:cubes-shift} and get 
\begin{equation}\label{comb2}
    \m(\bar u^\nu_{ (r+3)x},Q^\nu_{r+3}((r+3) x))\le \m(\bar u^\nu_{ x_r},Q^\nu_r( x_r))+ L
    \big(\sqrt{n}+3+1\big)(r+3)^{n-2}\,.
\end{equation}
Combining \eqref{comb1} with \eqref{comb2} and rescaling by $(r+3)^{n-1}$ we have
\begin{equation*}
    \frac{\m(\bar u^\nu_{(r+3) x},Q^\nu_{r+3}((r+3) x))}{(r+3)^{n-1}}\le \frac{r^{n-1}}{(r+3)^{n-1}}\frac{ \m(\bar u^\nu_{0},Q^\nu_r(0))}{r^{n-1}}+ \frac{C}{r+3}\,,
\end{equation*}
hence, up to replacing $r+3$ with $r$, passing to the limsup as $r\to+\infty$ we find 
\begin{equation*}
    \limsup_{r\to+\infty}  \frac{\m(\bar u^\nu_{r x},Q^\nu_{r}(r x))}{r^{n-1}}\le
     \limsup_{r\to+\infty}  \frac{ \m(\bar u^\nu_{0},Q^\nu_r(0))}{r^{n-1}}=\lim_{r\to+\infty}  \frac{ \m(\bar u^\nu_{0},Q^\nu_r(0))}{r^{n-1}}\,.
\end{equation*}
In a similar way one can show that also 
\begin{equation*}
     \lim_{r\to+\infty}  \frac{ \m(\bar u^\nu_{0},Q^\nu_r(0))}{r^{n-1}}=\liminf_{r\to+\infty}  \frac{ \m(\bar u^\nu_{0},Q^\nu_r(0))}{r^{n-1}}\le   \liminf_{r\to+\infty}  \frac{\m(\bar u^\nu_{ rx},Q^\nu_{r}( rx))}{r^{n-1}}
     \,.
\end{equation*}
The above two inequalities conclude step 2.

\textit{Conclusions.
} From step 2 we deduce \eqref{eq:f-per-hom}. Moreover arguing as in the step 2 of the proof of Proposition \ref{prop:ex-limit-zero} we may also deduce that $f_\hom$ is $ \mathcal B(\Sph^{n-1})$-measurable.

\end{proof}
\begin{remark}
	We conclude this section by observing that our analysis in the stochastic/periodic setting (and the corresponding results Theorem \ref{thm:stoch_hom_2} and Theorem \ref{thm:det_hom_2}) covers, in particular, the case of functionals considered in \cite{Morfe, CFHP} and the critical regime of \cite{AnBraCP}.
\end{remark}

\section*{Acknowledgments}
\noindent 
The author wishes to thank Caterina Ida Zeppieri for having drawn her attention to this problem and for many helpful discussions and suggestions, and Annika Bach for useful comments on a preliminary version.
The present work was supported by the Deutsche Forschungsgemeinschaft (DFG, German Research Foundation) project 3160408400 and under the Germany Excellence Strategy EXC 2044-390685587, Mathematics M\"unster: Dynamics--Geometry--Structure.

\appendix
\section*{Appendix}\label{appendix}
\setcounter{theorem}{0}
\setcounter{equation}{0}
\renewcommand{\theequation}{A.\arabic{equation}}
\renewcommand{\thetheorem}{A.\arabic{theorem}}

\noindent In this last section we state and prove two technical lemmas which are used in Subsection \ref{sect:cell-formulas}.

\medskip

For $A \in \mathcal A$, $x\in \R^n$, and $\nu \in \Sph^{n-1}$, in what follows $\m(\bar u^\nu_x, A)$ denotes the infimum value given by \eqref{eq:m-bis}. 

\begin{lemma}\label{lem:cubes-shift}
Let $f \in \mathcal F$; let $\nu\in\Sph^{n-1}$, $x,\tilde{x}\in\R^n$, and $\tilde{r}> r>4$ be such that
\begin{equation*}
{\rm (i)}\ Q_{r+2}^\nu(x)\wcont Q_{\tilde{r}}^\nu(\tilde x)\,,\qquad {\rm (ii)}\ \dist(\tilde x,\Pi^\nu(x))\leq\frac{r}{4}\,.
\end{equation*}
Then there exists a constant $L>0$ (independent of $\nu,x,\tilde{x},r,\tilde{r}$) such that
\begin{equation}\label{eq:cubes-shift}
\m(\bar{u}_{\tilde{x}}^\nu,Q_{\tilde{r}}^\nu(\tilde x))\leq\m(\bar{u}_{x}^\nu,Q_{r}^\nu(x))+L\big(|x-\tilde x|+|r-\tilde{r}|+1\big)\tilde{r}^{n-2}\,.
\end{equation}
\end{lemma}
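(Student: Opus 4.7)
The plan is to upper-bound $\m(\bar u_{\tilde x}^\nu, Q_{\tilde r}^\nu(\tilde x))$ by exhibiting an admissible competitor built from a near-optimal function for $\m(\bar u_x^\nu, Q_r^\nu(x))$. Working in rotated coordinates so that $\nu=e_n$ and translating so that $x=0$, the two relevant one-dimensional profiles become $\bar u_0^\nu(y)=\uu(y_n)$ and $\bar u_{\tilde x}^\nu(y)=\uu(y_n-\tilde x_n)$, and hypothesis $(\mathrm{ii})$ reads $|\tilde x_n|\le r/4$. Given $\eta>0$, I fix $u\in\Adm(\bar u_0^\nu, Q_r^\nu(0))$ with $\F(u, Q_r^\nu(0))\le\m(\bar u_0^\nu, Q_r^\nu(0))+\eta$.

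I will build the competitor $v$ on $Q_{\tilde r}^\nu(\tilde x)$ by patching together four pieces on nested regions: $v=u$ on $Q_r^\nu(0)$; $v=\bar u_0^\nu$ on the buffer $Q_{r+1}^\nu(0)\setminus Q_r^\nu(0)$ (which matches $u$ on $\partial Q_r^\nu(0)$ by admissibility); $v=\phi\,\bar u_0^\nu+(1-\phi)\,\bar u_{\tilde x}^\nu$ on the transition shell $Q_{r+2}^\nu(0)\setminus Q_{r+1}^\nu(0)$, with $\phi$ a smooth radial cutoff equal to $1$ on $\partial Q_{r+1}^\nu(0)$ and to $0$ on $\partial Q_{r+2}^\nu(0)$, satisfying $|\nabla\phi|\le C$; and $v=\bar u_{\tilde x}^\nu$ on the exterior $Q_{\tilde r}^\nu(\tilde x)\setminus Q_{r+2}^\nu(0)$. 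By construction $v\in W^{1,p}(Q_{\tilde r}^\nu(\tilde x))$, $0\le v\le 1$, and $v=\bar u_{\tilde x}^\nu$ near $\partial Q_{\tilde r}^\nu(\tilde x)$, hence $v\in\Adm(\bar u_{\tilde x}^\nu, Q_{\tilde r}^\nu(\tilde x))$.

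The key is then the energy estimate on the three ``outer'' regions. By \eqref{f-value-at-0} the integrand of $\F(v,\cdot)$ is supported where $v\in(0,1)$ or $\nabla v\neq 0$; from the definition of $\uu$ this set is contained in a slab of thickness at most $2+|\tilde x_n|$ in the $y_n$-direction around the jump hyperplanes $\{y_n=0\}$ and $\{y_n=\tilde x_n\}$. Because $|\tilde x_n|\le r/4$, those slabs sit comfortably inside the $y_n$-range of $Q_{r+2}^\nu(0)$, and their intersection with each of the three annular regions at any slab-level $y_n$ is an $(n-1)$-dimensional annular shell whose area, by the elementary inequality $a^{n-1}-b^{n-1}\le(n-1)(a-b)a^{n-2}$, is bounded by $C\tilde r^{n-2}$ for the two inner shells and by $C(|\tilde r-r|+1)\tilde r^{n-2}$ for the exterior. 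Combining these volume bounds with the upper bound \ref{hyp:ub-f}, with the uniform bound $|\nabla v|\le C$ on the transition shell (which follows from $\uu\in C^1$ and $|\nabla\phi|\le C$), and with $|\tilde x_n|\le|\tilde x-x|$, I get
\[
\F(v, Q_{\tilde r}^\nu(\tilde x)) \le \m(\bar u_x^\nu, Q_r^\nu(x)) + \eta + L\bigl(|x-\tilde x|+|\tilde r-r|+1\bigr)\tilde r^{n-2},
\]
and letting $\eta\to 0$ proves \eqref{eq:cubes-shift}.

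The main obstacle is precisely this codimension-$1$ volume estimate: a naive construction placing a one-dimensional profile on the entire annulus $Q_{\tilde r}^\nu(\tilde x)\setminus Q_r^\nu(0)$ would cost $O(\tilde r^{n-1})$ in energy, which is too large. The saving of one power of $\tilde r$ comes from noticing that the interior of the transition slab is already handled by $u$, so only the lateral (shell) portion must be constructed by hand; hypothesis $(\mathrm{ii})$ is exactly what guarantees that the slab stays inside $Q_{r+2}^\nu(0)$ in the $y_n$-direction, so that ``lateral'' really does mean codimension one.
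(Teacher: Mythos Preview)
Your proof is correct and follows essentially the same strategy as the paper's: take a near-minimiser on the small cube, extend it to a competitor on the large cube by interpolating (via a smooth cutoff) to the new boundary profile $\bar u_{\tilde x}^\nu$, and control the extra energy by observing that the set where the competitor is not pure-phase is a slab of thickness $O(|x-\tilde x|+1)$ intersected with a tangential shell, yielding volume of order $\tilde r^{n-2}$ rather than $\tilde r^{n-1}$. The only cosmetic difference is placement of the cutoff: you put it on the shell $Q_{r+2}^\nu(0)\setminus Q_{r+1}^\nu(0)$ (exploiting the room granted by hypothesis~(i) to get $|\nabla\phi|\le C$ directly), whereas the paper interpolates on a shell $Q_r^\nu(x)\setminus Q_{r-\beta}^\nu(x)$ determined by where the near-minimiser already attains its boundary datum; both lead to the same estimate.
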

\begin{proof}

Let $\nu\in S^{n-1}$, $\eta>0$ be fixed and let $ u\in\Adm(\bar{u}^\nu_x,Q^\nu_r(x))$  with
\begin{equation}\label{est:energy-shift}
\F(u,Q^\nu_{r}(x))\leq\m(\bar{u}^\nu_{x},Q^\nu_{r}(x))+\eta\,.
\end{equation}
Then $u=\bar{u}^\nu_{x}$ a.e.\ in $U$, where $U$ is a neighbourhood of $\partial Q^\nu_{r}(x)$. Let moreover $\beta\in(0,1)$ be such that $Q^\nu_{r}(x)\sm\overline{Q}^\nu_{r-\beta}(x)\subset U$. Let $\tilde x \in \R^n$ and $\tilde r>r>4$ satisfy (i)-(ii). We set
\begin{equation*}
R\defas R_\nu\biggl( \big(Q'_{r}\sm\overline{Q'}_{r-\beta}\big)\x\Big(-1-\frac{|(x- \tilde{x})\cdot\nu|}{2}\,,1+\frac{|(x-\tilde{x})\cdot\nu|}{2}\Big)\biggr)+x+\frac{(\tilde x-x)\cdot\nu}{2} \nu\,,
\end{equation*}
where $R_\nu$ be as in \ref{Rn}. Let $\varphi$ be a smooth cutoff between $Q^\nu_{r-\beta}(x)$ and $Q^\nu_{\tilde r}(\tilde x)$ and define $$\tilde u\defas \varphi u+(1-\varphi)\bar u^\nu_{\tilde x}\,,$$
which clearly belongs to $ \Adm(\bar u^\nu_{\tilde x},Q^\nu_{\tilde r}(\tilde x))$ and moreover it satisfies
\begin{equation}\label{fund-est-bdry}
\F(\tilde u,Q^\nu_{\tilde r}(\tilde x))\le 
\F(u,Q_r^\nu(x))+\F(\tilde u,Q^\nu_{\tilde r}(\tilde x)\setminus \overline Q^\nu_{r-\beta}(\tilde x))
\,.
\end{equation}
Notice that
$$ \tilde u= \varphi  \bar u^\nu_{ x}+(1-\varphi)\bar u^\nu_{\tilde x}\quad \text{ in }\quad Q^\nu_{\tilde r}(\tilde x)\setminus \overline{Q}^\nu_{r-\beta}(x)\,,$$
and in particular 
$$\tilde u={u}^\nu_{x}= u^\nu_{\tilde x}\in\{0,1\}\quad \text{ a.e. in }\quad (Q^\nu_{\tilde r}(\tilde x)\setminus \overline{Q}^\nu_{r-\beta}(x))\setminus \overline R\,.$$
This together with~\eqref{f-value-at-0} and \ref{hyp:ub-f} imply
\begin{equation}\label{c:c0}
\begin{split}
  \F(\tilde u,Q^\nu_{\tilde r}(\tilde x)\setminus \overline Q^\nu_{r-\beta}(\tilde x))&\le c_2\int_{R}\left(W(\tilde u)+|\nabla\tilde u|^p\right)\dy \\
  &\le  C\L^n(R)+ C\int_{R}\left( |\nabla \bar u^\nu_{ x}|^p+|\nabla \bar u^\nu_{\tilde x}|^p+ |\nabla\varphi|^p|\bar u^\nu_{\tilde x}-\bar u^\nu_{ x}|^p
  \right)\dy\\
  &\le C\L^n(R)+ C \L^{n-1}(Q'_r\setminus\overline Q'_{r-\beta})\,, 
\end{split}
\end{equation}
where the last two inequality follow by recalling that $W$ is bounded on compact sets and by \eqref{1dim-energy-bis}.
Now we observe that
\begin{equation}\label{c:c1}
	\mathcal{L}^{n-1}\big(Q'_{\tilde{r}}\sm\overline{Q}'_{r-\beta}\big)\leq C|r-\tilde{r}|\tilde{r}^{n-2}\,,
\end{equation}
while
\begin{equation}\label{c:c2}
\L^n(R)\le C\beta r^{n-2}(|x-\tilde x|+1)\,.
\end{equation}
Finally gathering~\eqref{est:energy-shift}, \eqref{fund-est-bdry}, \eqref{c:c0}, \eqref{c:c1} and \eqref{c:c2}  we obtain
\begin{equation*}
\m(\bar{u}^\nu_{\tilde x},Q^\nu_{\tilde{r}}(\tilde x))\leq\F\big(\tilde{u},Q^\nu_{\tilde{r}}(\tilde x)\big)\leq\m(\bar{u}^\nu_{x},Q^\nu_r(x))+L\big(|x-\tilde x|+|r-\tilde{r}|+1\big)\tilde{r}^{n-2}+\eta\,,
\end{equation*}
for some $L>0$ independent of $x,\tilde x, r, \tilde r, \nu$, thus \eqref{eq:cubes-shift} follows by the arbitrariness of $\eta>0$. 
\end{proof}
\begin{lemma}\label{lem:cubes-rotation}
Let $f \in \mathcal F$; let $\alpha\in (0,\frac{1}{2})$ and $\nu,\tilde{\nu}\in\Sph^{n-1}$ be such that 
\begin{equation}\label{c:30-condition}
\max_{1\leq i\leq n-1}|R_\nu e_i-R_{\tilde \nu} e_i| + |\nu -\tilde \nu| <\frac{\alpha}{\sqrt n},
\end{equation}
where $R_\nu$ and $R_{\tilde \nu}$ are orthogonal $(n\times n)$-matrices as in \ref{Rn}. 
Then there exists a constant $c_\alpha>0$ (independent of $\nu,\tilde{\nu}$), with $c_\alpha\to 0$ as $\alpha\to 0$, such that for every $x\in\R^n$ and every $r>2$ we have
\begin{equation}\label{est:cubes-rotation}
\begin{split}
\m\big(\bar{u}_{rx}^{\tilde{\nu}},Q_{(1+\alpha)r}^{\tilde{\nu}}(rx)\big)-c_\alpha r^{n-1} &\leq \m\big(\bar{u}_{rx}^\nu, Q_{r}^\nu(rx)\big)\\
&\leq \m\big(\bar{u}_{rx}^{\tilde{\nu}},Q_{(1-\alpha)r}^{\tilde{\nu}}(rx)\big)+c_\alpha r^{n-1}\,.
\end{split}
\end{equation}
\end{lemma}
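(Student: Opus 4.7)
The plan is to prove both inequalities by a symmetric gluing construction: build a competitor for one minimization problem out of a near-minimizer for the other. I will focus on the right-hand inequality (upper bound); the left-hand one follows by swapping the roles and extending from $Q^\nu_r(rx)$ outward to $Q^{\tilde\nu}_{(1+\alpha)r}(rx)$.

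First I would check the geometric nesting $Q^{\tilde\nu}_{(1-\alpha)r}(rx) \subset\subset Q^\nu_r(rx) \subset\subset Q^{\tilde\nu}_{(1+\alpha)r}(rx)$ forced by \eqref{c:30-condition}. Indeed, writing $y = rx + R_{\tilde\nu} z$ with $|z_i|\le (1-\alpha)r/2$, the identity
\[
R_\nu^T(y-rx)\cdot e_i \;=\; z_i + z\cdot R_{\tilde\nu}^T(R_\nu e_i - R_{\tilde\nu} e_i)
\]
together with \eqref{c:30-condition} and $|z|\le \sqrt{n}(1-\alpha)r/2$ yields $|R_\nu^T(y-rx)\cdot e_i|\le (1-\alpha^2)r/2<r/2$; the outer inclusion is analogous.

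Next, fix $\eta>0$ and take $v\in\Adm(\bar u^{\tilde\nu}_{rx},Q^{\tilde\nu}_{(1-\alpha)r}(rx))$ almost-minimizing the middle problem, with $v=\bar u^{\tilde\nu}_{rx}$ in some neighborhood of $\partial Q^{\tilde\nu}_{(1-\alpha)r}(rx)$; extend it by $\bar u^{\tilde\nu}_{rx}$ to $\tilde v$ on $\R^n$. Pick a smooth cutoff $\varphi$ with $\varphi\equiv 1$ on $Q^{\tilde\nu}_{(1-\alpha)r}(rx)$, $\varphi\equiv 0$ near $\partial Q^\nu_r(rx)$, and $\|\nabla\varphi\|_\infty\le C$ (the annulus has thickness $\gtrsim\alpha r$, large enough for a transition band of fixed width). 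Define $w\defas \varphi\tilde v+(1-\varphi)\bar u^\nu_{rx}$, so that $w\in\Adm(\bar u^\nu_{rx},Q^\nu_r(rx))$.

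The key energy estimate exploits the fact that $\bar u^\nu_{rx}$ and $\bar u^{\tilde\nu}_{rx}$ coincide (and take values in $\{0,1\}$) outside the slab $B\defas\{|(y-rx)\cdot\nu|\le 1+\alpha r/2\}$: this is because $|(y-rx)\cdot(\tilde\nu-\nu)|\le\alpha r/2$ on $Q^\nu_r(rx)$, so outside $B$ both arguments of $\uu$ lie in $[1,\infty)$ or both in $(-\infty,-1]$. Hence, by \ref{hyp:ub-f} and \eqref{f-value-at-0}, the energy density of $w$ vanishes on $(Q^\nu_r(rx)\setminus Q^{\tilde\nu}_{(1-\alpha)r}(rx))\setminus B$. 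On the complementary ``bad'' set, $|\nabla w|^p$ and $W(w)$ are bounded by a constant depending only on $\uu, p$ and $\|\nabla\varphi\|_\infty$. Expanding
\[
|Q^\nu_r(rx)\cap B|-|Q^{\tilde\nu}_{(1-\alpha)r}(rx)\cap B|
\]
via Fubini in the $\nu$-direction and using $(1-\alpha)^{n-1}\ge 1-(n-1)\alpha$, together with the closeness $|R_{\tilde\nu}^T\nu\cdot e_n|\ge 1-C\alpha^2$, gives that this measure is controlled by a quantity of order $\alpha r^{n-1}$ (modulo lower-order terms in $r$). Combining this with \eqref{1dim-energy-bis} applied on the annulus — which controls the pieces where $w=\bar u^\nu_{rx}$ or $w=\bar u^{\tilde\nu}_{rx}$, nonzero only on slabs of width $2$ — delivers $\mathcal{F}(w,Q^\nu_r(rx))\le \m(\bar u^{\tilde\nu}_{rx},Q^{\tilde\nu}_{(1-\alpha)r}(rx))+\eta+c_\alpha r^{n-1}$ with $c_\alpha\to 0$. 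Letting $\eta\to 0$ yields the desired inequality.

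The main technical difficulty is the uniform-in-$r$ control of the transition-region contribution $|\nabla\varphi|^p|\tilde v-\bar u^\nu_{rx}|^p$: its integrand is supported in the bad slab $B$ intersected with the transition band, and one must balance the cutoff gradient against the measure of this set to obtain a bound scaling as $\alpha r^{n-1}$ rather than as $\alpha^2 r^n$ (as would follow from a naive bound on the full annular ``bad'' region). The lower bound in \eqref{est:cubes-rotation} is proved symmetrically by taking a near-minimizer $u$ for $\m(\bar u^\nu_{rx},Q^\nu_r(rx))$, extending it by $\bar u^{\tilde\nu}_{rx}$ on $Q^{\tilde\nu}_{(1+\alpha)r}(rx)\setminus Q^\nu_r(rx)$, and running the same cutoff/energy estimate on the outer annulus.
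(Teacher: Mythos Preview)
Your approach is essentially the same as the paper's: a cutoff gluing that turns a near-minimiser for one problem into a competitor for the other, with the energy on the gluing region controlled by identifying the ``bad'' slab where $\bar u^\nu_{rx}$ and $\bar u^{\tilde\nu}_{rx}$ may differ. The paper proves the left inequality and you prove the right (these are symmetric), and the paper places the cutoff in a thin band $Q^\nu_r\setminus Q^\nu_{r-\beta}$ dictated by the near-minimiser's boundary layer, whereas you use a fixed-width band with universally bounded $\|\nabla\varphi\|_\infty$; both choices work.

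There is, however, one genuine inconsistency in your write-up. You claim that the measure $|Q^\nu_r\cap B|-|Q^{\tilde\nu}_{(1-\alpha)r}\cap B|$ is of order $\alpha r^{n-1}$, but a direct computation gives
\[
(2+\alpha r)\,r^{n-1}\big[1-(1-\alpha)^{n-1}/\cos\theta\big]\;\sim\; (n-1)\alpha^2 r^{n}+O(\alpha r^{n-1}),
\]
so the $\alpha^2 r^n$ term is really there --- exactly the scaling you warn against in your final paragraph. The fix, which you do identify correctly, is not to bound the integrand uniformly on the full annular bad set but to split: on the \emph{transition band} intersected with $B$ (where $0<\varphi<1$), the band has fixed width, so this set has measure $\lesssim (2+\alpha r)r^{n-2}=O(\alpha r^{n-1})+O(r^{n-2})$; on the remainder of the annulus, $w$ equals one of the profiles $\bar u^\nu_{rx}$ or $\bar u^{\tilde\nu}_{rx}$, whose energy density is supported on a slab of width~$2$, and the relevant measure is then $O(\alpha r^{n-1})$ via \eqref{1dim-energy-bis}. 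The paper sidesteps this split by making the transition band itself thin (width $\beta$), so that the only region requiring a pointwise bound is $R$, already of measure $O(\beta\alpha r^{n-1})$; outside $R$ the energy of $\bar u^{\tilde\nu}_{rx}$ on the outer annulus $Q^{\tilde\nu}_{(1+\alpha)r}\setminus Q^{\tilde\nu}_{(1-\alpha)r}$ is handled directly by \eqref{1dim-energy-bis}. Once you drop the erroneous full-annulus measure claim and rely on the transition-band estimate together with \eqref{1dim-energy-bis}, your argument is correct and equivalent to the paper's.
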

\begin{proof}
We show only that 
\begin{equation}\label{c:30-claim}
\m\big(\bar{u}_{rx}^{\tilde{\nu}},Q_{(1+\alpha)r}^{\tilde{\nu}}(rx)\big)-c_\alpha r^{n-1} \leq \m\big(\bar{u}_{rx}^\nu, Q_{r}^\nu(rx)\big),
\end{equation}
for some $c_\alpha>0$, with $c_\alpha \to 0$, as $\alpha \to 0$; as the proof of the other inequality is analogous.
Let $x\in\R^n$, $r>2$, and set $r_\alpha^\pm\defas (1\pm\alpha)r$. From \eqref{c:30-condition} we might deduce that 
\begin{equation*}\label{cond:cube-rotation-center-x} 
Q_{ r_\alpha^-}^{\tilde{\nu}}(rx) \wcont Q_r^\nu(rx)\wcont Q_{ r_\alpha^+}^{\tilde{\nu}}(rx).
\end{equation*}
Let $\eta>0$ be fixed and let $u\in W^{1,p}(Q^\nu_r(rx))$ be a test function for $\m(\bar u_{rx}^\nu,Q^\nu_r(rx))$ satisfying 
\begin{equation}\label{cont_v}
\F(u,Q^\nu_r(rx))\le \m(\bar u_{rx}^\nu,Q^\nu_r(rx))+\eta.
\end{equation}
Choose $\beta\in(0,1)$ such that $Q_{ r_\alpha^-}^{\tilde{\nu}}(rx) \wcont Q^\nu_{r-\beta}(rx)$ and $u=\bar u^\nu_{rx}$ a.e. in $Q_r^\nu(rx)\setminus \overline{Q}_{r-\beta}^\nu(rx)$ and set
\begin{equation*}
R:=R_\nu\Big(Q'_{r} \setminus\overline {Q'}_{r-\beta}\x(-1-\alpha r,1+\alpha r)\Big)+rx\,,
\end{equation*}
where $R_\nu$ is as in~\ref{Rn}. 
Next define 
$$\tilde u\defas \varphi u+(1-\varphi)\bar{u}^{\tilde\nu}_{rx}\,,$$
where $\varphi$ is a smooth cutoff between $Q^\nu_{r-\beta}(rx)$ and $Q^{\nu}_{r}(rx)$. Then $\tilde u\in \Adm(\bar u^{\tilde\nu}_{r x},Q_{ r_\alpha^+}^{\tilde{\nu}}(rx))$ and it also satisfies
\begin{equation}\label{fund-est-bdry1}
	\F(\tilde u,Q^{\tilde\nu}_{r_\alpha^+}(rx))\le 
	\F(u,Q_r^\nu(rx))+ 	\F(\tilde{u} ,
Q^{\nu}_{r}(rx)\setminus \overline{Q}_{r-\beta}^\nu(rx))+
	\F(\bar{u}^{\tilde\nu}_{rx} ,
Q^{\tilde\nu}_{r_\alpha^+}(rx)\setminus \overline{Q}_{r_\alpha^-}^\nu(rx))
	\,.
\end{equation}
By definition we have 
$$ \tilde u= \varphi \bar u^\nu_{rx}+(1-\varphi)\bar{u}^{\tilde\nu}_{rx}\quad\text{ in }\quad Q^{\nu}_{r}(rx)\setminus \overline{Q}_{r-\beta}^\nu(rx)\,, $$
and in particular 
$$ \tilde u= u^\nu_{rx}={u}^{\tilde\nu}_{rx}\in\{0,1\}\quad \text{ a.e. in }\quad(Q^{\nu}_{r}(rx)\setminus \overline{Q}_{r-\beta}^\nu(rx))\setminus\overline R\,.$$
Therefore from~\eqref{f-value-at-0}, \eqref{hyp:ub-f} and \eqref{1dim-energy-bis} we deduce that
\begin{equation}\label{c:b0}
\begin{split}
\F(\tilde{u} ,
Q^{\nu}_{r}(rx)\setminus \overline{Q}_{r-\beta}^\nu(rx))
 &\le c_2\int_{R}\left(W(\tilde u)+|\nabla\tilde u|^p\right)\dy \\
  &\le  C\L^n(R)+ C\int_{R}\left( |\nabla \bar u^\nu_{r x}|^p+|\nabla \bar u^{\tilde\nu}_{r x}|^p+ |\nabla\varphi|^p|\bar u^\nu_{r x}-\bar u^{\tilde\nu}_{ rx}|^p
  \right)\dy\\
  &\le C\L^n(R)+ C \L^{n-1}(Q'_r\setminus\overline Q'_{r-\beta})\,.
\end{split}
\end{equation}
On the other hand from \eqref{1dim-energy-bis} and \eqref{f-value-at-0} we infer 
\begin{equation}\label{1}
\F(\bar{u}^{\tilde\nu}_{rx}(y) ,
Q^{\tilde\nu}_{r_\alpha^+}(rx)\setminus Q^{\tilde\nu}_{r_\alpha^-}(rx))\le c_2C_\uu ((1+\alpha)^{n-1}-(1-\alpha)^{n-1})r^{n-1}\,.
\end{equation}
Using that 
\begin{equation*}
\L^{n-1}(Q'_r\setminus\overline Q'_{r-\beta})\le Cr^{n-2}
 \,,\quad\text{ and }\quad \L^n(R)\le C\beta r^{n-2}\alpha r\le C\alpha r^{n-1}\,.
\end{equation*}
together with \eqref{fund-est-bdry1}, \eqref{c:b0}, and \eqref{1} we obtain
\begin{equation*}\label{fund-est-bdry1-bis}
	\F(\tilde u,Q^{\tilde\nu}_{r_\alpha^+}(rx))\le 
	\F(u,Q_r^\nu(rx))+\F(\bar{u}^{\tilde\nu}_{rx}(y) ,
Q^{\tilde\nu}_{r_\alpha^+}(rx)\setminus Q^{\tilde\nu}_{r_\alpha^-}(rx)+c_\alpha r^{n-1}
\,,
\end{equation*}
where $c_\alpha\defas c_4C_{\uu}\big((1+\alpha)^{n-1}-(1-\alpha)^{n-1}\big)+C\alpha$.
Eventually as the above inequality together with \eqref{cont_v} imply
\begin{align*}
\m(\bar{u}_{ rx}^{\tilde{\nu}},Q_{ r_\alpha^+}^{\tilde{\nu}}(rx)) \leq\m(\bar{u}_{rx}^\nu,Q_r^\nu(rx))+c_\alpha r^{n-1} +\eta\,,
\end{align*} 
 we  may deduce \eqref{c:30-claim} by the arbitrariness of $\eta>0$.
\end{proof}
\EEE

\end{document}